\newtheorem{theorem}{Theorem}
\newtheorem{lemma}[theorem]{Lemma}
\newtheorem{question}[theorem]{Question}
\newtheorem{corollary}[theorem]{Corollary}
\newtheorem{proposition}[theorem]{Proposition}
\newtheorem{conjecture}[theorem]{Conjecture}
\theoremstyle{definition}
\newtheorem*{example}{Example}
\newtheorem*{remark}{Remark}
\newtheorem{definition}[theorem]{Definition}
\title{Rational manifold models for duality groups}
\author{Grigori Avramidi}
\address{Mathematics Institute\\ 
        University of M\"unster\\ 
        Germany}
\email{avramidi@uni-muenster.de}
\def\ra{\rightarrow}
\def\beqa{\begin{eqnarray}}
\def\eeqa{\end{eqnarray}}
\def\beqa{\begin{eqnarray}}
\def\eeqa{\end{eqnarray}}
\DeclareMathOperator{\im}{Im}
\DeclareMathOperator{\Out}{Out}
\DeclareMathOperator{\SL}{SL}
\begin{document}

\begin{abstract}
We show that a finite type duality group of dimension $d>2$ is the fundamental group of a $(d+3)$-manifold with rationally acyclic universal cover. We use this to find closed manifolds with rationally acyclic universal cover and some nonvanishing $L^2$-Betti numbers outside the middle dimension, which contradicts a rational analogue of a conjecture of Singer. 
\end{abstract}
\maketitle

\section*{Introduction}
A classifying space for a discrete group $\Gamma$ is any complex $B\Gamma$ with fundamental group $\Gamma$ and contractible universal cover. If a group has a $d$-dimensional classifying space then it always has a $2d$-dimensoinal {\it manifold} classifying space \cite{stallingsembedding}. This motivates the following question. 

\begin{question}
\label{acquestion}
What is the smallest dimension of a manifold $B\Gamma$?
\end{question}
This question has been studied for many groups 
and answered for some of them \cite{avramididavisokunschreve,bestvinafeighn, bestvinakapovichkleiner,despotovic}. A crucial example to keep in mind is the Cartesian product of $d$ copies of the free group on two generators $F_2^d$. It is shown in \cite{bestvinakapovichkleiner} that any manifold $BF_2^d$ has dimension $\geq 2d$. 

It has recently been shown \cite{okunschreve} that the existence of sufficiently low dimensional manifold classifying spaces for certain groups would lead to counterexamples to Singer's vanishing conjecture for $L^2$-Betti numbers of closed aspherical manifolds  (see subsection \ref{vanishingconjectures}). However, for the classical groups (finite index torsionfree subgroups of lattices in locally symmetric spaces, mapping class groups, outer automorphism groups of free groups etc.) one knows that such low dimensional manifold classifying spaces do not exist. 

On the other hand, since the early days of manifold theory it has been evident that things can be much more tractable when studied up to ``finite ambiguity'' (see the introduction of \cite{sullivan} for a compelling exposition of this philosophy).
In this spirit, this paper investigates a rational version of the above question. 
Two spaces $X$ and $Y$ are {\it rationally equivalent} if there is a $\pi_1$-isomorphism $X\ra Y$ that lifts to a rational homology isomorphism of universal covers $\widetilde X\ra\widetilde Y$.
\begin{question}
What is the smallest dimension of a manifold rationally equivalent to $B\Gamma$?
\end{question}
Our main result shows that the obstructions to obtaining low dimensional manifold classifying spaces can (and often do) disappear rationally. 
\begin{theorem}
\label{maintheorem}
Suppose that $d>2$ and $\Gamma$ is a $d$-dimensional duality group $\Gamma$ with finite classifying space $B\Gamma$. Then $B\Gamma$ is rationally equivalent to a $(d+3)$-manifold.
\end{theorem}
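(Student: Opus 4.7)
The plan is to construct the $(d+3)$-manifold $M$ rationally equivalent to $B\Gamma$ in two stages: first thicken $B\Gamma$ to a $2d$-manifold by Stallings' theorem, then reduce the ambient dimension to $d+3$ using the rational structure of the dualizing module.

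First I would embed the finite complex $B\Gamma$ in $\R^{2d}$ via Stallings' embedding theorem and take a regular neighborhood $N^{2d}$, which is a compact $2d$-manifold with $\pi_1 N=\Gamma$ and contractible universal cover $\widetilde N$. Poincar\'e--Lefschetz duality gives
$$H_k(\widetilde N,\partial\widetilde N)\cong H^{2d-k}_c(\widetilde N)\cong H^{2d-k}(\Gamma;\Z\Gamma),$$
which by the duality-group hypothesis equals the dualizing module $D$ concentrated in degree $k=d$. The long exact sequence of the pair then pins down $H_{d-1}(\partial\widetilde N)\cong D$ with $H_j(\partial\widetilde N)=0$ for $0<j\ne d-1$.

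Next I would try to replace $(N,\partial N)$ by a rational Poincar\'e pair $(Y,\partial Y)$ of dimension $d+3$ instead of $2d$, with $Y\simeq_{\Q} B\Gamma$. Since $\Gamma$ has $\cd_{\Q}\Gamma=d$ and its rational dualizing module $D\otimes\Q$ lives in degree $d$, the hope is to upgrade the $d$-dimensional complex $B\Gamma$ to a $(d+3)$-dimensional rational Poincar\'e pair by adjoining ``dual'' cells in degrees $d+1,d+2,d+3$ that realize chain-level Poincar\'e duality over $\Q\Gamma$. The cells must be attached equivariantly, and the finiteness of $B\Gamma$ is used to keep the construction under control.

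Finally, with such a rational Poincar\'e pair in hand, a surgery-theoretic argument in dimension $d+3\ge 6$ (using the hypothesis $d>2$) would convert it into an actual smooth manifold. Rationally, surgery obstructions are of finite order and should be absorbed into a controlled modification that does not affect the rational homotopy type.

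The main obstacle will be the middle step: producing the three layers of dual cells equivariantly so that the resulting complex is a rational Poincar\'e pair with rationally acyclic universal cover. The codimension $3$ reflects exactly the gap between the natural $d$-dimensional chain resolution of $\Z$ provided by $B\Gamma$ and the dual resolution in degree $d+3$ required by Poincar\'e duality.
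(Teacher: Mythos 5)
Your outline has the right high-level skeleton (identify the boundary's equivariant homology via Lefschetz duality, manufacture a rational Poincar\'e pair in dimension $d+3$, then use surgery), and the Lefschetz duality computation showing $\overline H_*(\widetilde\partial)\cong D$ concentrated in one dimension is exactly what the paper does (equation (\ref{duality})). You have also correctly located the crux: the middle step of building the ``dual cells'' equivariantly.

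But that crux is precisely where the proposal has a gap that you flag but do not close, and it is the paper's main technical contribution. What you need is not a few extra cells glued to $B\Gamma$; you need a $\Gamma$-complex $X$ whose universal cover $\widetilde X$ has reduced rational homology equal to $D\otimes\mathbb Q$ concentrated in degree $2$, realized by a chain complex that is (a multiple of) the free resolution $C^{d-*}(B\Gamma;\mathbb Z\Gamma)$ of $D$. This is Steenrod's equivariant Moore space problem, which is known to have no integral solution in general: the obstruction to attaching the next layer of free cells is that the Hurewicz map $\pi_n\to H_n$ of the partial complex need not split as a $\mathbb Z\Gamma$-map. Saying ``the cells must be attached equivariantly'' names the difficulty without solving it. The paper's resolution is Proposition \ref{rationalsplitting}: using Quillen's Lie model (specifically the cellular Lie model of \cite{baues}), it shows the Hurewicz map \emph{always} has a $\mathbb Q\Gamma$-equivariant section, because both the homology and homotopy chain complexes sit inside the free Lie algebra $L_X$ and one can construct the section by a chain-level homotopy. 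Once the Moore space $X$ exists, Theorem \ref{poincarepair} verifies $(B\Gamma,X)$ is a rational Poincar\'e pair by comparing against the boundary of a high-dimensional regular neighborhood (the role you assign to $N^{2d}$ --- it appears as a comparison device, not as the object one tries to shrink).

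Two smaller points. First, in the surgery step the obstruction does not merely have finite order to be ``absorbed'': the inclusion $X\hookrightarrow B\Gamma$ is a $\pi_1$-isomorphism, so Wall's $\pi$-$\pi$ theorem (in its rational form, following \cite{anderson}) says the surgery obstruction group is trivial and no further modification is needed; rationality is used instead to produce a candidate normal bundle, since for $k>n+1$ every $k$-disk bundle admits a finite-degree normal map by the rational Hurewicz theorem. Second, the regular neighborhood the paper actually uses lives in $\mathbb R^{d+r'+1}$ for $r'>d$, not $\mathbb R^{2d}$, because one needs the boundary's homology to sit in a high enough degree that the stabilization lemma can bring the Poincar\'e pair down to dimension $d+3$.
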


The manifolds constructed for this theorem are compact manifolds-with-boundary $(M,\partial)$ whose universal cover $\widetilde M$ is rationally acyclic. The reflection group method \cite{davisbook} can be applied (along the lines of \cite{okunschreve}) to these $(M,\partial)$ to get closed manifolds with $\mathbb Q$-acyclic universal cover and $L^2$-Betti numbers appearing above the middle dimension. 
\begin{remark}
A manifold has rationally acyclic universal cover if and only if all of its higher homotopy groups are torsion, so we will call such  manifolds {\it rationally aspherical}.
\end{remark}
\begin{theorem}
\label{closedtheorem}
There is a closed, rationally aspherical manifold (of dimension $\leq 7$) whose $L^2$-Betti numbers are not concentrated in the middle dimension. 
\end{theorem}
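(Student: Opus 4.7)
The plan is to feed a carefully chosen duality group into Theorem \ref{maintheorem} and then close up the resulting manifold-with-boundary via the reflection group method of \cite{davisbook} in the spirit of \cite{okunschreve}. Take $\Gamma=F_2^4$, the product of four copies of the free group on two generators: it is a $4$-dimensional duality group with finite classifying space whose only nonzero $L^2$-Betti number is $b_4^{(2)}(\Gamma)=1$. By Theorem \ref{maintheorem}, $\Gamma$ is the fundamental group of a compact $7$-manifold $(M,\partial M)$ with rationally acyclic universal cover $\widetilde M$.

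First I would apply the Davis reflection group construction to $(M,\partial M)$. Choose a flag triangulation $L$ of $\partial M$ giving a mirror structure, form the associated right-angled Coxeter group $W_L$, and build the closed $7$-manifold $N=(W_L\times M)/{\sim}$, where $(w,x)\sim(w',x)$ whenever $w^{-1}w'$ lies in the subgroup of $W_L$ generated by the reflections whose mirrors contain $x$. By \cite{davisbook}, $N$ is a closed manifold.

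Next I would verify that $\widetilde N$ is rationally acyclic. The universal cover $\widetilde N$ is assembled out of copies of $\widetilde M$ indexed by the (contractible) Davis complex $\Sigma_L$, with adjacent copies glued along lifts of mirror strata of $\partial M$. Since $\widetilde M$ is $\Q$-acyclic and the mirror strata (chosen compatibly with the construction of $M$) have controlled $\Q$-homology, a Mayer--Vietoris spectral sequence over $\Sigma_L$ collapses to show $\widetilde{H}_*(\widetilde N;\Q)=0$; equivalently, all higher homotopy groups of $N$ are torsion, so $N$ is rationally aspherical in the sense of the earlier remark.

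Finally I would compute the $L^2$-Betti numbers of $\widetilde N$ using the Davis--Okun formula, or its adaptation for reflection constructions used in \cite{okunschreve}, which expresses $b_i^{(2)}(\widetilde N)$ as a combination of the $L^2$-Betti numbers of $\Gamma$ with reduced $L^2$-cohomology of links in $L$. The strategy is to choose $L$ so that the single contribution of $b_4^{(2)}(\Gamma)=1$ survives as a nonzero $L^2$-Betti number of $\widetilde N$; since $N$ is odd-dimensional, the rational analogue of Singer's conjecture demands that \emph{all} $L^2$-Betti numbers vanish, so any such nonzero class yields the desired counterexample. The main obstacle is technical: transferring the Davis reflection argument to the merely rationally aspherical setting, so that the $\Q$-acyclicity of $\widetilde M$ and of its mirror strata propagates through the gluing to $\widetilde N$, and then verifying that the weighted Davis--Okun formula really does deposit a nonzero $L^2$-Betti number in the desired non-middle degree.
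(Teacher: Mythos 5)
Your outline follows the paper's route (feed $F_2^4$ into Theorem \ref{maintheorem}, close up with the Davis reflection group construction, observe $\Q$-acyclicity passes through the doubling), but there is a genuine gap at the final, crucial step: you never actually prove that some $L^2$-Betti number of the closed manifold $N=\mathcal U(W,M)/G$ is nonzero. You say you would ``choose $L$ so that the single contribution of $b_4^{(2)}(\Gamma)=1$ survives,'' but there is no formula that simply deposits $b_4^{(2)}(M)$ into $b_4^{(2)}(N)$ unconditionally, and no indication of how a clever choice of $L$ would secure this. The Davis--Okun/Okun--Schreve machinery does not give such a computation for free: removing walls from $\widetilde{\mathcal U}$ changes $L^2$-homology, and to control that change above the middle dimension one must know the Singer conjecture for the \emph{lower-dimensional} wall strata.

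The paper resolves this with a dichotomy that is entirely absent from your proposal. Either the Singer conjecture already fails for some closed aspherical manifold of dimension $<7$ (in which case that manifold itself is the desired example), or the Singer conjecture holds in all dimensions $<7$; in the latter case Theorem \ref{davismanifoldbetti} (the paper's adaptation of Okun--Schreve to simply connected but not contractible $\widetilde{\mathcal U}$) becomes applicable and yields $b_4^{(2)}(\mathcal U/G)\geq |W/G|\,b_4^{(2)}(M)=|W/G|>0$ for \emph{any} flag triangulation $L$. So the choice of $L$ is irrelevant; what carries the argument is the conditional structure (Singer in low dimensions $\Rightarrow$ the wall-removal argument preserves high $L^2$-homology) combined with the trivial escape in the other case. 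Without this case split, your step from $b_4^{(2)}(M)\neq 0$ to $b_4^{(2)}(N)\neq 0$ is unsupported. A secondary, smaller inaccuracy: $\widetilde N$ is not assembled from copies of $\widetilde M$ indexed by the Davis complex $\Sigma_L$ of $W$; it is built via the infinitely generated Coxeter group $\widetilde W$ coming from the $\pi_1 M$-cover $\widetilde L$ of $L$, and the $\Q$-acyclicity of $\widetilde{\mathcal U}$ then follows from Theorem 8.1.6 of \cite{davisbook} rather than from a Mayer--Vietoris collapse over $\Sigma_L$.
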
 

\subsection*{Outline of Theorem \ref{maintheorem}: Equivariant Moore spaces and manifold classifying spaces }
For a $\Gamma$-module $D$ and an integer $r\geq 2$, a {\it $\Gamma$-equivariant Moore space $M(r,D)$} is any $\Gamma$-complex with (reduced) homology concentrated in dimension $r$ and equal to the $\Gamma$-module $D$. In section \ref{rationalhomotopysection} we 
construct rational versions of such $\Gamma$-equivariant Moore spaces.

For a $d$-dimensional, finite type duality group $\Gamma$ with dualizing module $D$, the existence of an equivariant Moore space $M(r,D)$ is closely related to the existence of a $(d+r+1)$-dimensional manifold classifying space. 
\begin{itemize}
\item
In one direction, if $M$ is a $(d+r+1)$-dimensional manifold classifying space and $M$ is the interior of a compact manifold with boundary $(M,\partial)$, then the universal cover of the boundary $\widetilde\partial$ is a $\Gamma$-equivariant Moore space $M(r,D)$ (see (\ref{duality})).  
\item
In the other direction, one starts with a $\Gamma$-equivariant Moore space and tries to realize it (up to homotopy) as the universal cover of the boundary of a manifold classifying space. In sections \ref{dualitysection} and \ref{surgerysection} we show that rationally this can be done. In more detail, section \ref{dualitysection} uses the fact that $D$ is the dualizing module of a duality group to show $M(r,D)$ has an appropriate Poincare duality ``built in'' and section \ref{surgerysection} uses this to build a $(d+r+1)$-manifold rationally equivalent to $B\Gamma$.  
\end{itemize}
Theorem \ref{maintheorem} follows from this and the existence of rational $\Gamma$-equivariant Moore spaces.

\subsection*{The kernel and spectrum of the Laplacian on forms on the universal cover}
We wish to discuss the relation of the results of this paper to questions about the kernel and spectrum of the Laplace operator on differential forms.

A theme connecting analysis and topology is that global analysis (elliptic operators, $L^2$-differential forms and all that stuff) on the universal cover of a closed manifold tells us about the topology of the base manifold (e.g. about the Euler characteristic, signature, and growth of Betti numbers in finite covers). When the universal cover is contractible, then the simplest possible analytic picture is that the $L^2$-deRham cohomology of the universal cover (represented by the harmonic $L^2$-forms) appears only in the middle dimension. This was first observed by Dodziuk and Singer in the context of rotationally symmetric universal covers \cite{dodziuk}, and they suggested that it might hold more generally. Since then, this picture has proved to be remarkably resilient. For instance, it is correct for locally symmetric manifolds \cite{olbrich}\footnote{The proof of this is chronologically earlier than the result for rotationally symmetric manifolds, so Dodziuk and Singer probably knew this.}, K\"ahler hyperbolic manifolds \cite{gromovkaehler}, even dimensional manifolds of sufficiently pinched negative curvature \cite{donnellyxavier}, manifolds whose fundamental groups contain an infinite normal amenable subgroup \cite{cheegergromov}, and four-dimensional right angled Coxeter manifolds \cite{davisokun}. 
The proofs of these results range from analytic (in the first three cases) to combinatorial (in the last one).

On the other hand, evidence against this picture is provided by Anderson's result \cite{andersoncounter} that without the cocompact $\pi_1$-action small perturbations of the hyperbolic metric on hyperbolic $3$-space admit $L^2$-harmonic $1$-forms. There is also the general theme (especially striking in the case of symmetric manifolds) that as one moves from strongly pinched negative curvature to non-positive curvature\footnote{With the torus as an extreme case.}, the spectrum of the Laplacian on forms can contain zero in a band around the middle dimension, even though genuine $L^2$-harmonic forms only appear in the middle dimension. These two things suggest that it is difficult to prove the Dodziuk-Singer picture in general using purely analytic methods. The results of this paper provide further support for this point of view by giving evidence of a different sort: this time involving a cocompact $\pi_1$-action and universal covers that are ``contractible from the point of view of deRham cohomology''.  

One could be led to these ideas by the following train of thought. The paper \cite{okunschreve} proved that the analytic Dodziuk-Singer picture, if true for all closed aspherical manifolds without any curvature assumptions\footnote{The original question in \cite{dodziuk} was for nonpositively curved manifolds. Gromov pointed out the extent of ignorance about this question by noting that for all we know it might be true for all closed aspherical manifolds, or there may be counterexamples even among strongly pinched branched covers of odd dimensional hyperbolic manifolds (\cite{gromovasymptotic} Section 8.$A_1$(G), page 152). It seems that the idea of extending the question to aspherical manifolds has been picked up, but without the implied skepticism.}, would have the remarkable topological consequence that $L^2$-Betti numbers are obstructions to the existence of low dimensional thickenings\footnote{A thickening of a  complex is a manifold homotopy equivalent to it.} of aspherical complexes\footnote{One can define $L^2$-Betti numbers more combinatorially for finite complexes. If the finite complex is a closed manifold, then this agrees with the more analytic definition (by \cite{dodziukderham}).} (the possibility of this was already suggested in \cite{davisokun}). Thickenings were studied in \cite{bestvinakapovichkleiner} where it was shown for a nonpositively curved complex $X$ that a van Kampen type embedding obstruction (a torsion obstruction) for $\partial_{\infty}\widetilde X$ obstructs thickenings of $X$.    
This was investigated further in the special case of (right angled) Salvetti complexes\footnote{Salvetti complexes are a certain class of nonpositively curved complexes generalizing a cartesian product $8\times\dots\times 8$ of several copies of the figure eight.}. The paper \cite{avramididavisokunschreve} showed (for $k\not=3$) that the van Kampen obstruction is the only obstruction to thickening a $k$-dimensional Salvetti complex $X$ into a $(2k-1)$-manifold. It also showed that this van Kampen obstruction doesn't vanish if $b_k^{(2)}(X)\not=0$. This suggests two things:
\begin{enumerate}
\item
\label{nocurvature}
The van Kampen embedding obstruction is torsion. 
Is this a general feature of thickening obstructions? 
\item
\label{npc}
For a finite nonpositively curved complex (or a closed nonpositively curved manifold) $X$, one should study the $L^2$-Betti numbers in terms of the boundary at infinity $\partial_{\infty}\widetilde X$ and try to relate them directly to embedding obstructions for $\partial_{\infty}\widetilde X$.   
\end{enumerate} 
From one point of view, Theorem \ref{maintheorem} addresses item (\ref{nocurvature}) by showing that thickening obstructions are torsion in a large range of dimensions and for all finite aspherical complexes whose fundamental groups are duality groups\footnote{It is an interesting question to determine whether the {\it duality group} assumption is truly necessary, or is merely a simplifying assumption.}. So, the torsion nature of the top dimensional thickening obstruction for Salvetti complexes is not an isolated phenomenon. 

Moreover, Theorem \ref{closedtheorem} shows that the Dodziuk-Singer question regarding (upper bounds on) the kernel of the Laplacian is of a different nature from the question whether the spectrum of the Laplacian contains zero\footnote{The conjecture that zero should be in the spectrum of the Laplacian on the universal cover of an aspherical manifold was formulated in \cite{gromovlarge} and the role of asphericity was clarified in \cite{farberweinberger}. In contrast to the Dodziuk-Singer question, the affirmative answer to it is known for a large class of groups because it is implied by the strong Novikov conjecture \cite{lott}.}. This is because proofs of the later (via the strong Novikov conjecture) work equally well for aspherical and rationally aspherical manifolds. So, the Dodziuk-Singer question is sensitive to the distinction between asphericity and rational asphericity, while the zero-in-the-spectrum question, in all the cases we are aware of, is not.
This suggests that the global analysis methods developed to study the later should not lead to a positive solution of the former without additional analytic (e.g. nonpositive curvature) assumptions which would a priori rule out the distinction between asphericity and rational asphericity.\footnote{This does not mean that some of the tools for studying the zero-in-the-spectrum question might not prove useful for the Dodziuk-Singer question. For instance, one might ask if twisting with representations of the fundamental group (in the spirit of \cite{luecktwisting}, but into groups that themselves possess $L^2$-Betti numbers in a range of dimensions instead of just $\mathbb Z^d$) can produce nonzero twisted $L^2$-Betti numbers outside the middle dimension.} 


By contrast, in the presence of (either combinatorial or analytic) nonpositive curvature, there are promising tools available to study the Dodziuk-Singer question, and in particular the relation suggested by item (\ref{npc}) between $L^2$-Betti numbers and embedding obstructions for the boundary at infinity. In fact, the idea of studying $L^2$-chains\footnote{And also $L^p$-chains for $p\not=2$.} on $\widetilde X$ in terms of the boundary at infinity\footnote{Or other similar boundaries.} $\partial_{\infty}\widetilde X$ appears in Section 8 of \cite{gromovasymptotic} (see also \cite{bourdonkleinercoxeter}, \cite{bourdonkleiner} and \cite{pansu}), where, for instance, the pinched vanishing result of \cite{donnellyxavier} is discussed from this point of view. These ideas seem to be somewhat out of fashion\footnote{As far as the author can tell.} (due perhaps to the success of more algebraic approaches, e.g. \cite{lueckbook}). In light of the fact that they have proportionality built in and are closely connected to the other $L^p$'s they may warrant a closer look, especially in the context of the Dodziuk-Singer question. In the negatively curved case the question of when cohomology of $\partial_{\infty}\widetilde X$ is represented by $L^2$-cochains on $\widetilde X$ is related to the conformal dimension of the boundary $\partial_{\infty}\widetilde X$. It seems interesting to study this in the nonpositively curved case and see if one can relate it to embedding obstructions for the boundary at infinity. In addition to forming a picture of the spectrum of the Laplacian on CAT(0) complexes, clarifying the connection between $L^2$-harmonic forms and thickening obstructions in these cases should shed light on whether one can expect $L^2$-Betti numbers to give thickening obstructions more generally (without curvature assumptions), or whether this is too much to expect from $L^2$-Betti numbers. 

\begin{remark}
What we have described as the analytic Dodziuk-Singer picture is usually referred to as the Singer conjecture in the literature \cite{davisonhopf,davisokun,lueckbook}, so we'll do that in the rest of the paper. 
\end{remark} 

\subsection*{Different thickenings} Finally, we wish to give two examples illustrating why the $\mathbb Q$-aspherical manifolds constructed in this paper are interesting in their own right and may have applications independent of the relation to the Dodziuk-Singer question.

\subsubsection*{Fixed point sets of homotopically trivial $\mathbb Z/p$-actions}  In the course of constructing a $\mathbb Q$-aspherical $(d+3)$-manifold $F^{d+3}$ rationally equivalent to the product of $d$ punctured tori $(\dot{\mathbb T}^2)^d$ we need to invert only a finite number of primes, so $F^{d+3}$ is actually $\mathbb Z/p$-aspherical for sufficiently large primes $p$. Thus, it is potentially the fixed point set of a homotopically trivial $\mathbb Z/p$-action\footnote{If $d-3$ is even. Otherwise, build a $(d+4)$-manifold instead.} on a (genuinely aspherical) $2d$-manifold $N^{2d}$ homotopy equivalent to $(\dot{\mathbb T}^2)^d$. One can show (see e.g. \cite{avramidil2isometries}) that the manifold $(\dot{\mathbb T}^2)^d$ does not have such homotopically trivial $\mathbb Z/p$-actions, but it is conceivable that an exotic\footnote{Such a manifold would be homotopy equivalent but not properly homotopy equivalent to $(\dot{\mathbb T}^2)^d$.} such $N^{2d}$ can be constructed via a sort of converse Smith theory (in the sense of \cite{jones}). Building the fixed point set $F$ is a first step in this direction.  

\subsubsection*{Twisted rational thickenings}
For $\Gamma=F_2^d$, different $2d$-dimensional thickenings $M$ of $B\Gamma$ can be distinguished via the fundamental class $e\in H_d(M;D)$ by taking the self-intersection $e\cap e\in H_0(M;D\otimes D)=(D\otimes D)_{\Gamma}$. This turns out to be an infinitely generated abelian group, so the question arises which of these self-intersections are represented by genuine $2d$-dimensional thickenings. (It is not hard to see that the $d$-fold cartesian product of thrice-punctured spheres is not properly homotopy equivalent to $(\dot{\mathbb T}^2)^d$, but one expects many more interesting thickenings.) Many of the difficulties of realizing elements of $(D\otimes D)_{\Gamma}$ by thickenings disappear rationally, so one can view the construction of such rational $2d$-thickenings as a first approximation of a classification of $2d$-dimensional manifolds homotopy equivalent to $BF_2^d$ up to proper homotopy equivalence. In this paper we did the untwisted case\footnote{Which, in particular, produces a $2d$-manifold rationally equivalent to $(\dot{\mathbb T}^2)^d$ but with very different peripheral properties: its interior can be homotoped to the end, in sharp contrast to the classical $(\dot{\mathbb T}^2)^d$ manifold.} $e\cap e=0$, but the methods apply more generally. 

\subsection*{Contents of the paper} 
We build a complex in section \ref{rationalhomotopysection}, show it has Poincare duality in section \ref{dualitysection}, construct a manifold-with-boundary in section \ref{surgerysection}, glue the manifolds by partially doubling along the boundary to get a closed manifold in section \ref{davissection}, and investigate its $L^2$-Betti numbers in section \ref{l2betti}. The appendix contains an alternative to section \ref{rationalhomotopysection} in a special case. 

\subsection*{Acknowledgements} I would like to thank Shmuel Weinberger for a number of discussions that motivated this paper. I would also like to thank Mladen Bestvina and Kevin Schreve for helpful disucussions. 

\section{\label{rationalhomotopysection}The equivariant Moore space problem and rational homotopy theory}
\subsection{Equivariant Moore spaces}
Let $0\ra F_{r+d}\stackrel{f_{r+d}}\ra\dots\stackrel{f_{r+1}}\ra F_r\ra D\ra 0$ be resolution of a $\Gamma$-module $D$ by finitely generated free $\mathbb Z\Gamma$-modules. The following is a version of Steenrod's equivariant Moore space question.
\begin{question}
Is there a $\Gamma$-complex $V$ with reduced chain complex $(F_*,f_*)$?
\end{question}
The main result of this section (Theorem \ref{rationalmoorespace} below) says that {\it rationally} the answer is yes for any $r\geq 2$.
After posting the first version of this paper, I was informed 
that a rational realization result of this type has already been proved by J. Smith in \cite{smith} (see Corollary 1.7 in that paper, which, more generally, proves a rational realization result for $\Gamma$-chain complexes whose homology is not necessarily concentrated in a single dimension). Our proof is somewhat different from that of \cite{smith} in that it obtains the rational realization result as a consequence of the {\it cellular Lie model} for rational homotopy (which was not available when \cite{smith} was written.) 
\begin{theorem}
\label{rationalmoorespace}
For any $r\geq 2$ there is a $\Gamma$-complex $V$ whose reduced cellular chain complex $(\overline C_*(V),\partial_V)$ is $(F_*,Nf_*)$ for some large positive integer $N$.  
\end{theorem}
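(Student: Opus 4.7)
My plan is to first build a $\Gamma$-equivariant free differential graded Lie algebra (dgLa) over $\Q$ whose linearization is the shifted chain complex $(F_\ast,f_\ast)\otimes\Q$, and then realize this dgLa as a $\Gamma$-CW complex, clearing finitely many denominators.

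Let $W$ be the graded $\Z\Gamma$-module with $W_{k-1}=F_k$ for $r\le k\le r+d$, and let $\mathcal L=\mathbb{L}(W\otimes\Q)$ denote the free graded Lie algebra over $\Q$; the $\Gamma$-action on $F_\ast$ extends canonically to $\mathcal L$ by dgLa automorphisms. I would inductively construct a $\Gamma$-equivariant derivation $d$ of degree $-1$ on $\mathcal L$ with $d^2=0$ such that the projection $W\otimes\Q\hookrightarrow\mathcal L\xrightarrow{d}\mathcal L\twoheadrightarrow W\otimes\Q$ is the shifted boundary $f_\ast\otimes\Q$. Start with $d=0$ on $W_{r-1}$. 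Having defined $d$ on $W_{<k}$ with $d^2=0$ there, on a generator $x\in W_k$ set $d(x)=f_{k+1}(x)+\beta(x)$, where $\beta(x)$ is a correction of bracket-length $\ge 2$ chosen so $d^2(x)=0$. Writing the condition out, $d\beta(x)=-d(f_{k+1}(x))$; the right-hand side has vanishing linear part (since $f_k\circ f_{k+1}=0$), so it is a cycle in the bracket-length $\ge 2$ subcomplex of the free dgLa, and any such cycle is a boundary because we work over the field $\Q$. Equivariance of $\beta$ is arranged by choosing it only on one representative of each $\Gamma$-orbit in a $\Z\Gamma$-basis of $F_{k+1}$ and then extending by the $\Gamma$-action.

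Next, apply the realization direction of the cellular Lie model (in the form of Quillen, Baues--Majewski, or Buijs--F\'elix--Murillo--Tanr\'e) to $(\mathcal L,d)$ to obtain a simply connected rational $\Gamma$-CW complex $V_\Q$ whose reduced rational cellular chain complex is $(F_\ast\otimes\Q,f_\ast\otimes\Q)$. To pass to an integral $\Gamma$-complex, clear denominators: only finitely many $\Gamma$-orbits of cells occur and only in finitely many dimensions, so there is a single positive integer $N$ with the property that, for every cell-orbit and every dimension, the $N$-fold multiple of the prescribed (rational) attaching class lies in the image of the integral homotopy group of the current skeleton. Performing the resulting inductive $\Gamma$-equivariant attachments produces a $\Gamma$-complex $V$ whose reduced cellular chain complex is $(F_\ast,Nf_\ast)$.

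The main obstacle is this last step of clearing denominators while preserving $\Gamma$-equivariance and lifting through the integral attaching-map calculus at each stage. The dgLa construction itself is fairly standard since one works over the field $\Q$, where the relevant obstructions in free objects vanish; the delicate point is to produce a single $N$ that works simultaneously across all dimensions and $\Gamma$-orbits so that the inductive construction actually closes up. The $\Z\Gamma$-freeness of $F_\ast$ and the finiteness of the number of orbits in each degree are what make this manageable.
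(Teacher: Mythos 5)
Your proposal is conceptually aligned with the paper — both use Quillen/Baues-type Lie models to rationally solve the equivariant Moore space problem — but there is a genuine gap in the realization step, which is where all the work in the paper actually happens.

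First, a smaller issue: your justification for constructing the differential $d$ on the abstract free dgLa is off. You write that the cycle $-d(f_{k+1}(x))$ has bracket-length $\ge 2$ and that ``any such cycle is a boundary because we work over $\Q$.'' That is not a valid implication: the bracket-length $\ge 2$ subcomplex of a free dgLa is not acyclic over any field (e.g.\ $[x,x]$ for $x$ an odd-degree generator with $dx=0$). In fact no obstruction argument is needed at all: since $f_k\circ f_{k+1}=0$, you can take $\beta\equiv 0$ throughout, and then $d^2$ is a derivation vanishing on generators, hence zero. So the abstract dgLa $(\mathcal L,d)$ with linear part $f_\ast$ always exists trivially. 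The content of Theorem \ref{rationalmoorespace} is elsewhere.

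The real gap is in the step ``apply the realization direction of the cellular Lie model to $(\mathcal L,d)$.'' Realizing a $\Gamma$-equivariant free dgLa as a $\Gamma$-CW complex with prescribed cellular chain complex is not something the cited realization theorems hand you; producing the $\Gamma$-equivariant attachments is precisely the equivariant Moore space problem the theorem is trying to solve. Concretely, suppose you have built $V^{(k)}$ with cellular boundary a multiple of $f_\ast$, and you want to attach $(k+1)$-cells for $F_{k+1}$ with cellular boundary (a multiple of) $f_{k+1}$. The attaching map of such a cell must represent a class in $\pi_k(V^{(k)})\otimes\Q \cong H_{k-1}(L_{V^{(k)}},\partial_{V^{(k)}})$ whose Hurewicz image is $f_{k+1}(w)$. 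But $f_{k+1}(w)$, viewed as a degree-$(k-1)$, bracket-length-one element of the cellular Lie model $L_{V^{(k)}}$, need not be a cycle for $\partial_{V^{(k)}}$: its linear part dies under $\partial_{V^{(k)}}$ (since $f_k f_{k+1}=0$), but $\partial_{V^{(k)}}(f_{k+1}(w))$ can have a nonzero bracket-length $\ge 2$ component coming from the earlier attaching maps. Your abstract $(\mathcal L,d)$ does not control $\partial_{V^{(k)}}$; they are only quasi-isomorphic, not equal, and the discrepancy is exactly what can make $f_{k+1}(w)$ fail to lift to a rational homotopy class of $V^{(k)}$.

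What is needed, and what you have not supplied, is a $\Gamma$-equivariant chain-level section $s\colon C(V^{(k)})\ra L_{V^{(k)}}$ of the Hurewicz chain map. Then $s(f_{k+1}(w))$ is a genuine cycle in $L_{V^{(k)}}$ with the correct linear part, and attaching along (integral multiples of) these classes gives cells with cellular boundary $Nf_{k+1}$. The paper's Proposition \ref{rationalsplitting} is exactly the statement that such a section exists and, crucially, that attaching cells along it produces a complex that again admits such a section; the correction term $\phi_{n+1}$ there absorbs the difference between the prescribed attaching data and the actual cellular Lie differential. Your final paragraph identifies ``clearing denominators'' as the delicate point, but that part is routine; the delicate point is the existence and propagation of the equivariant chain-level section, and your proposal does not address it.
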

\begin{remark}
So, $V$ is a simply connected $\Gamma$-complex with $\overline H_r(V;\mathbb Q)\cong D\otimes\mathbb Q$ and no other rational homology. 
\end{remark}

We wish to build a $\Gamma$-complex whose reduced cellular chain complex is $F_*$ inductively, one skeleton at a time. To do this we need to know that at each step $F_{n+1}$ can be attached to $F_{\leq n}$ along the boundary map. So
\begin{itemize}
\item
the Hurewicz map $\pi_n(F_{\leq n})\ra H_n(F_{\leq n})$ needs to be onto and, moreover,
\item
the $n$-th homotopy group $\pi_n(F_{\leq n})$ should split as a direct sum of $\Gamma$-modules $\pi_n(F_{\leq n})\cong K\oplus H_n(F_{\leq n})$ with projection onto the second factor given by the Hurewicz map.
\end{itemize}
In other words, we need to know that {\it the Hurewicz map has a $\Gamma$-equivariant section and after attaching cells via this section, the Hurewicz map for the resulting complex again has a $\Gamma$-equivariant section}. Proposition \ref{rationalsplitting} below says that we can always do this rationally. It is a consequence of a {\it homological} description of rational homotopy groups which we recall in the following subsection.

\begin{remark}
While one can arrange that the Hurewicz map is rationally onto fairly directly, showing that the resulting extension of $\mathbb Q\Gamma$-modules splits requires more work. In some special cases one can compute that the relevant group of extensions $Ext^1_{\Gamma}(H_n(F_{\leq n}),K)$ vanishes and thus obtain the desired complex $V$. We do this in the appendix. However, it turns out that the extension will rationally split even if the relevant Ext group does not vanish. 
\end{remark}

\subsection{Rational homotopy theory}
The main result of the Lie algebra approach to rational homotopy theory (Theorem I in \cite{quillen}) is that the rational homotopy groups of a simply connected space $X$ can be computed as the homology 
\begin{equation}
\label{rationalhomotopy}
H_*(L_X,\partial_X)=\pi_{*}(\Omega X)\otimes\mathbb Q=\pi_{*+1}(X)\otimes\mathbb Q
\end{equation}
of a certain differential graded Lie algebra $(L_X,\partial_X)$ that is constructed from the space $X$.
Moreover, a basepoint preserving map of simply connected spaces $f:X\ra Y$ gives a chain map of the corresponding differential graded Lie algebras $Lf:(L_X,\partial_X)\ra(L_Y,\partial_Y)$ whose homology $HLf$ is the induced map on rational homotopy groups. Consequently, if $X$ has a basepoint preserving $\Gamma$-action then (\ref{rationalhomotopy}) is a $\mathbb Q\Gamma$-module isomorphism, and we can compute the $\Gamma$-action on the rational homotopy groups from the $\Gamma$-action on the Lie algebra.

\subsection{The cellular Lie model of a cell complex}
There are several natural ways to associate to a simply connected space $X$ a differential graded Lie algebra model that computes the rational homotopy groups of that space. We use the one given in \cite{baues}, which is usually called the cellular Lie model of a cell complex $X$. We describe it in the situation when $X$ has a single vertex and no $1$-cells, which is the only case we will need. From now on $(L_X,\partial_X)$ will always denote this model. 
 
\subsection{The Lie algebra $L_X$}
Let $V$ be a graded vector space and $T(V)$ its tensor algebra. The {\it free graded Lie algebra $L(V)$} is the graded Lie subalgebra generated by $V\subset T(V)$ and the Lie bracket $[x,y]:=x\otimes y-(-1)^{|x||y|}y\otimes x$. Let $s^{-1}\overline C(X)$ be the reduced rational cellular chain complex of $X$ shifted down in dimension by one. Then $L_X$ is the free Lie algebra on $s^{-1}\overline C(X)$. A $\Gamma$-action on $X$ makes $L_X$ into graded Lie algebra of $\mathbb Q\Gamma$-modules.  

\subsection{\label{attaching}The differential $\partial_X$}
Theorem 4.11 of \cite{baues} gives an explicit chain level formula for the differential $\partial_X$ on $L_X$. We only need to know that the differential gives the cell attaching maps on the level of rational homotopy groups, i.e. if $f:(D^{n+1},S^n)\ra X$ is the attaching map of a cell $c\in C_{n+1}(X)$ then $[\partial_X (s^{-1}c)]=f_*[S^n]\in H_{n-1}(L_{X^{(n)}},\partial_{X^{(n)}})\cong \pi_n(X^{(n)})\otimes\mathbb Q$.

\begin{remark}
A direct corollary of the description of rational homotopy groups as the homology of the cellular Lie model is the following: Suppose $X$ is a simply connected space whose reduced cellular chain complex has no cells in dimension $<k$. Then the rational Hurewicz map $\pi_{*}(X)\otimes\mathbb Q\ra H_*(X;\mathbb Q)$ is an isomorphism in dimensions $\leq 2k-2$.\footnote{This follows because rational homology and rational homotopy get identified by the Hurewicz map since there are no Lie brackets in dimensions $<2k-1$.} This is a special case of the rational Hurewicz theorem (first proved by Serre), which we will need in section \ref{surgerysection}. 

\begin{theorem}[Rational Hurewicz theorem \cite{klauskreck}]
Suppose that $X$ is simply connected and that $\overline H_{<k}(X;\mathbb Q)=0$. Then the rational Hurewicz map 
\begin{equation}
\label{qhurewicz}
\pi_*(X)\otimes\mathbb Q\ra H_*(X;\mathbb Q)
\end{equation} 
is an isomorphism in dimensions $\leq 2k-2$. 
\end{theorem}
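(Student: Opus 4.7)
The plan is to reduce to the special case already pointed out in the preceding remark: when a simply connected CW complex $X'$ has no cells in dimensions $1$ through $k-1$, the cellular Lie model $L_{X'}$ has generators concentrated in degrees $\geq k-1$, so all Lie brackets sit in degrees $\geq 2k-2$, and the rational Hurewicz map is forced to be an isomorphism in dimensions $\leq 2k-2$.

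\emph{Step 1 (Reduction to a minimal CW model).} First I would replace $X$ by a rationally equivalent simply connected CW complex $X'$ whose cells lie in dimension $0$ or in dimensions $\geq k$. This is standard: the rationalization $X_{\mathbb Q}$ satisfies $H_{<k}(X_{\mathbb Q};\mathbb Z)=H_{<k}(X;\mathbb Q)=0$, so iteratively applying the \emph{ordinary} integral Hurewicz theorem shows that $X_{\mathbb Q}$ is $(k-1)$-connected, and a CW approximation of such a space can be taken to have no cells below dimension $k$ apart from the basepoint. Since rational homotopy, rational homology, and the rational Hurewicz map are all invariant under rational equivalence, it suffices to prove the statement for $X'$.

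\emph{Step 2 (Cellular Lie model in low degrees).} The cellular Lie model of $X'$ is the free graded Lie algebra on $s^{-1}\overline C(X';\mathbb Q)$, which is concentrated in degrees $\geq k-1$. Consequently every Lie bracket $[x,y]$ has degree $|x|+|y|\geq 2k-2$, and in each degree $n\leq 2k-3$ one has
$$(L_{X'})_n \;=\; \overline C_{n+1}(X';\mathbb Q),$$
so on these linear generators the differential $\partial_{X'}$ coincides with the shifted cellular boundary by the attaching-map interpretation of subsection~\ref{attaching}.

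\emph{Step 3 (Homology computation).} To identify $H_n(L_{X'},\partial_{X'})$ for $n\leq 2k-3$ I also have to control the differential entering $(L_{X'})_n$ from $(L_{X'})_{n+1}$. When $n+1\leq 2k-3$ the source is again purely linear, so the homology reduces to cellular homology. In the borderline case $n=2k-3$, the only extra contribution in $(L_{X'})_{2k-2}$ comes from brackets $[x,y]$ with $|x|=|y|=k-1$, and these are automatically $\partial_{X'}$-closed because $\partial_{X'}x$ lies in $(L_{X'})_{k-2}=0$. Combined with (\ref{rationalhomotopy}) this gives
$$\pi_{n+1}(X')\otimes\mathbb Q \;=\; H_n(L_{X'},\partial_{X'}) \;\cong\; \overline H_{n+1}(X';\mathbb Q)$$
for all $n\leq 2k-3$, realized by the rational Hurewicz map; pulling back along $X'\to X$ finishes the proof. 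The main obstacle is Step~1: since $X$ may carry torsion homology below dimension $k$, its own CW structure cannot in general be rearranged to have no cells there, and the argument really has to pass through rationalization (or, equivalently, through a minimal Lie or Sullivan model).
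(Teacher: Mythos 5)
Your proposal is correct. The paper does not actually give its own proof of this theorem; it states it with a citation to \cite{klauskreck}, but the remark immediately preceding the statement spells out exactly the special case that forms the heart of your Steps~2--3: when the cells of $X$ sit in dimensions $\geq k$, the free Lie algebra $L_X$ on $s^{-1}\overline{C}(X)$ has no brackets below degree $2k-2$, so $L_X$ agrees with the shifted cellular chain complex in the relevant range and the Hurewicz chain map is forced to be a quasi-isomorphism there. Your Step~1 supplies the standard reduction from the general hypothesis $\overline H_{<k}(X;\mathbb Q)=0$ to that special case by rationalizing, applying the integral Hurewicz theorem to conclude $(k-1)$-connectivity of $X_{\mathbb Q}$, and taking a CW approximation with cells only in dimensions $0$ and $\geq k$; this is a natural way to fill the gap the paper leaves to the reference. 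Two small remarks worth making explicit: the identification $X_{\mathbb Q}\simeq X'$ gives a zig-zag $X\to X_{\mathbb Q}\leftarrow X'$ rather than a direct map $X'\to X$, and the conclusion for $X$ follows from that for $X'$ by naturality of the Hurewicz map along this zig-zag; and in the borderline case $n=2k-3$ your argument that the brackets $[x,y]$ with $|x|=|y|=k-1$ are $\partial_{X'}$-cycles relies on $\partial_{X'}$ being a derivation of the graded Lie algebra (the Leibniz rule), which is part of the definition of a differential graded Lie algebra and should be cited rather than left implicit.
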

\end{remark}
\subsection{Hurewicz map on chain complexes}
The rational Hurewicz map on the level of chain complexes is the chain map $h_X:L_{X}\ra C(X)$ obtained by setting all non-trivial Lie brackets to zero. Passing to homology, we get the map (\ref{qhurewicz}) above. A {\it section} of $h_X$ is a chain map $s:C(X)\ra L_{X}$ with $h_X\circ s=id_{C(X)}$. 

The homological description of rational homotopy groups can be used to turn the free resolution $F_*\ra D$ into a $\Gamma$-chain complex. The main step is expressed in the following proposition. 
\begin{proposition}
\label{rationalsplitting}
Suppose $X^n$ is an $n$-dimensional $\Gamma$-complex with one vertex, no $1$-cells, and 
\begin{itemize}
\item
the Hurewicz map $h_{X^n}:L_{X^n}\ra C(X^n)$ has a $\Gamma$-equivariant section $s$.
\end{itemize}
We can $\Gamma$-equivariantly attach $(n+1)$-cells to $X^n$ to get a $\Gamma$-complex $X^{n+1}=X^n\cup F_{n+1}$ so 
\begin{itemize}
\item
$H_n(X^{n+1};\mathbb Q)=0$, and
\item
the Hurewicz map $h_{X^{n+1}}:L_{X^{n+1}}\ra C(X^{n+1})$ has a $\Gamma$-equivariant section. 
\end{itemize}
\end{proposition}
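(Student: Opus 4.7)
The plan is to use the section $s$ to specify $\Gamma$-equivariant attaching maps for the new $(n{+}1)$-cells, and then to extend $s$ to a section of $h_{X^{n+1}}$ along these new cells.

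First, since $f_n\circ f_{n+1}=0$ in the resolution, $f_{n+1}(F_{n+1})\subset C_n(X^n)$ consists of cycles. Applying the $\Gamma$-equivariant chain map $s$ yields a $\Gamma$-equivariant homomorphism $s\circ f_{n+1}\colon F_{n+1}\ra L_{X^n}$ valued in cycles of the appropriate degree. Under the identification $\pi_n(X^n)\otimes\Q\cong H_{n-1}(L_{X^n},\partial_{X^n})$, this gives a $\Gamma$-equivariant map $\sigma\colon F_{n+1}\ra \pi_n(X^n)\otimes\Q$. Because $F_{n+1}$ is a finitely generated free $\Z\Gamma$-module, a single integer $N$ clears denominators on a $\Gamma$-basis, so that $N\sigma$ lifts to a $\Gamma$-equivariant map into $\pi_n(X^n)$. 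Attach $(n{+}1)$-cells $\Gamma$-equivariantly along representatives of $N\sigma$ to form $X^{n+1}$.

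The new cells contribute a cellular boundary equal to $Nf_{n+1}$, so in the inductive setup of Theorem \ref{rationalmoorespace} (where $\overline{C}(X^n)$ rationally realizes the initial segment of the resolution $F_*$), the attachment kills the rational $n$-cycles of $\overline{C}(X^n)$ and yields $H_n(X^{n+1};\Q)=0$ by exactness of $F_*$. To extend $s$, observe that $L_{X^{n+1}}$ is the free graded Lie algebra on $s^{-1}\overline{C}(X^n)\oplus s^{-1}F_{n+1}$, with new generators $\tilde e$ in degree $n$ indexed by a $\Gamma$-basis of $F_{n+1}$. By construction the Lie-model differential $\partial_{X^{n+1}}(\tilde e)$ represents the same rational homotopy class as $Nsf_{n+1}(e)$, so
\[
\partial_{X^{n+1}}(\tilde e)=Nsf_{n+1}(e)+\partial_{X^n}(\xi_e)
\]
for some $\xi_e\in L_{X^n}$. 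Choose $\xi_e$ $\Gamma$-equivariantly---possible because $F_{n+1}$ is $\Z\Gamma$-free, so one need only choose one element per $\Gamma$-basis element---and replace $\tilde e$ by $\tilde e-\xi_e$. The modified generators still freely generate the new summand of $L_{X^{n+1}}$, and now $\partial_{X^{n+1}}(\tilde e)=Nsf_{n+1}(e)$ on the nose. Define $s'\colon\overline{C}(X^{n+1})\ra L_{X^{n+1}}$ by extending $s$ via $e\mapsto\tilde e$ on new cells; this is $\Gamma$-equivariant, satisfies $h_{X^{n+1}}\circ s'=\id$, and the chain-map condition on new generators is precisely the normalization just achieved.

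The main obstacle is the last step: lifting the equality of rational homotopy classes to an equality of chain-level representatives in a $\Gamma$-equivariant way. Freeness of $F_{n+1}$ as a $\Z\Gamma$-module is what permits the equivariant choice of correction $\xi_e$; for a general $\Gamma$-module the obstruction to making this choice equivariantly would be genuine, and it is precisely this point that forces working rationally rather than integrally.
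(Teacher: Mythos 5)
Your proposal is correct and follows essentially the same route as the paper: use the section $s$ applied to the boundary/resolution map to define attaching classes in $\pi_n(X^n)\otimes\Q$, clear denominators using that $F_{n+1}$ is a finitely generated free $\Z\Gamma$-module, attach cells, note that the cellular Lie differential on the new generators agrees with $s$ of the cellular boundary up to a $\partial$-boundary of a bracket expression, and use $\Z\Gamma$-freeness of $F_{n+1}$ to pick those correction terms equivariantly. The only cosmetic difference is that you phrase the final normalization as a change of free generators $\tilde e\mapsto\tilde e-\xi_e$ (which works since $\xi_e$ lies in the bracket part of degree $n+1$ of $L_{X^n}$ and so dies under abelianization), while the paper fixes the generators and instead defines the new section directly by $s_{n+1}(x)=\phi_{n+1}(x)+x$; these are equivalent.
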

\subsection*{Notation}Everywhere in the proof below $H_*$ and $\pi_*$ will denote the rational homology and rational homotopy groups, respectively. We will omit $\mathbb Q$ from the notation from now on.
\begin{proof}
Let $(F_{\leq n},\partial_F)$ be the rational cellular chain complex $C(X^n)$. Also, write the chain complex $L_{X^n}$ as $(L_*\oplus F_*,\partial)$ where $L_*$ is the subspace of the cellular Lie model that involves at least one Lie bracket. The section $s$ restricts to a chain $\Gamma$-map $s_n:F_n\ra L_n\oplus F_n$.
Let $Z_n=\ker(\partial_F)=H_n(X^n)$ be the subspace of cycles in $F_n$. Since $s_n$ is a chain map, for any cycle $z\in Z_n$ we have $\partial s_n(z)=s_n(\partial_F(z))=0$, so $s_n$ further restricts to a map 
$$
Z_n\stackrel{s_n}\longrightarrow\ker\partial.
$$
Denote by $\overline s_n$ its composition with the quotient map 
$$
\ker\partial\stackrel{q}\longrightarrow{\ker\partial\over\partial L_{n+1}}=\pi_n(X^n).
$$
Next, pick $\Gamma$-generators $F_{n+1}\stackrel{\partial_F}\twoheadrightarrow Z_n$. We can take multiples of these generators, if necessary, so that $\overline{s_n}\circ\partial _F$ consists of integral elements in $\pi_n(X^n)$. Now, we can $\Gamma$-equivariantly attach $(n+1)$-cells to $X^n$ to get a $\Gamma$-complex $X^{n+1}=X^n\cup F_{n+1}$ whose attaching maps on rational homotopy are given by $\overline{s_n}\circ\partial_F:F_{n+1}\ra\pi_{n}(X^n)$. Look at the resulting complex $X^{n+1}$. 
\begin{itemize}
\item
Note that we killed all the $n$-dimensional rational homology of $X^n$, i.e. $H_{n}(X^{n+1})=0$.
\end{itemize}
Also note that the $(n+1)$-cells do not contribute any Lie brackets in dimensions $\leq n+1$, so in dimension $\leq n+1$ the only difference between the chain complexes $L_{X^n}$ and $L_{X^{n+1}}$ are the extra cells $F_{n+1}$. Let $\partial:F_{n+1}\longrightarrow\ker\partial$ be the boundary map (of the cellular Lie model $L_{X^{n+1}}$) restricted to these cells, and $\overline\partial=q\circ\partial$ its image in $\pi_n(X^n)$. 

The two maps 
$\partial$ and $s_n\circ\partial_F$
might not be the same. However, the description of both $\overline\partial$ and the composition $\overline{s_n}\circ\partial_F$ as the attaching map on the level of rational homotopy groups implies 
\begin{equation}
\overline s_n\circ\partial_F=\overline\partial:F_{n+1}\ra\pi_n(X^n)={\ker\partial\over\partial L_{n+1}}.
\end{equation}
Consequently, for any $x\in F_{n+1}$ there is $y\in L_{n+1}$ so that $s_n\partial_F(x)=\partial(x)+\partial(y)$. Since $F_{n+1}$ is a free $\Gamma$-module, we can pick such a $y$ for each generator $x$ of $F_{n+1}$ to get a $\Gamma$-map $\phi_{n+1}:F_{n+1}\ra L_{n+1}$ satisfying 
\begin{equation}
s_n\partial_F(x)=\partial(x)+\partial\phi_{n+1}(x).
\end{equation} 
Finally, we let 
\begin{eqnarray}
F_{n+1}&\stackrel{s_{n+1}}\longrightarrow& L_{n+1}\oplus F_{n+1},\\
x&\mapsto&\phi_{n+1}(x)+x.
\end{eqnarray}
Then $\partial(s_{n+1}(x))=\partial\phi_{n+1}(x)+\partial(x)=s_n(\partial_F(x))$ shows that the map $s:C(X^{n+1})\ra L_{X^{n+1}}$ is a chain map. Moreover, it is clearly a $\Gamma$-map and a section of $h_{X^{n+1}}$. This finishes the proof of the proposition.
\end{proof}

\begin{proof}[Proof of Theorem \ref{rationalmoorespace}]
Now let $V^r$ be a wedge of $r$-spheres with a $\Gamma$-action representing the free $\Gamma$-module $F_r$. Of course, $\pi_r(V^r)=H_r(V^r)=F_r$ and we can keep applying the Proposition to construct a $\Gamma$-complex $V:=V^{r+d}$ whose reduced chain complex is $F_*$ and whose boundary maps are multiples\footnote{Since each $F_{n+1}$ is a finitely generated free $\mathbb Z\Gamma$-module we can multiply all the $\mathbb Q\Gamma$-generators of $H_n(X^n)$ by a single positive integer $N$ so that all of them are represented by integral elements of $\pi_n(X^n)$.} of the boundary maps in the complex $(F_*,f_*)$. 
\end{proof}


\begin{remark}
A good reference for rational homotopy theory is \cite{felixhalperinthomas}. However, it takes Sullivan's cohomological approach \cite{sullivan} as its starting point and thus deals mostly with spaces of finite type. Instead, we need Quillen's Lie algebra approach to rational homotopy theory which works for general simply connected spaces. The existence of a Lie model for such spaces is Theorem I of \cite{quillen}. However, the particular Lie model we use (called the {\it cellular Lie model} of a cell complex) came later and can be found, for example, in \cite{baues}, where it is also shown (see 4.9(4) of \cite{baues}) that it is equivalent to Quillen's model. The cellular Lie model in the finite type case can be found in 24(e) of \cite{felixhalperinthomas}.
\end{remark}


\subsection{Locally finite complexes}
We replace $V$ by a locally finite complex $\widetilde X$ with free $\Gamma$-action. 
\begin{corollary}
\label{lfcomplex}
There is a finite complex $X$ with fundamental $\Gamma$ and universal cover $\widetilde X$ whose cellular chain complex is $(C_*(\widetilde X),\partial_{\widetilde X})=(C_*(E\Gamma)\oplus\overline C_*(V),\partial_{E\Gamma}\oplus\partial_V)$. 
\end{corollary}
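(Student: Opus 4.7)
The plan is to take a finite model $Y$ for $B\Gamma$ (which exists by hypothesis) with a chosen basepoint $y_0$ and universal cover $\widetilde Y$, and set
\[ X := Y \vee (V/\Gamma), \]
wedged at $y_0$ and the single $0$-cell of $V/\Gamma$ (the image of the $\Gamma$-fixed basepoint $v_0 \in V$). Since $\widetilde Y$ is a free contractible $\Gamma$-complex with finite quotient, it serves as $E\Gamma$.

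First, I would verify the easy structural facts. By construction $V$ is built from $v_0$ by attaching free $\Gamma$-orbits of $n$-cells for $n \geq r \geq 2$, so $V/\Gamma$ is a finite CW complex with a single $0$-cell and cells only in dimensions $\geq r$. In particular $V/\Gamma$ has no $1$-cells, hence is simply connected, so $\pi_1(X) = \pi_1(Y) * \pi_1(V/\Gamma) = \Gamma$ by van Kampen. Simple-connectedness of $V/\Gamma$ also makes the universal cover of the wedge transparent: $\widetilde X$ is $\widetilde Y$ with a copy of $V/\Gamma$ attached at each preimage of $y_0$, i.e., at each point of the orbit $\Gamma \cdot \tilde y_0$ for a chosen lift $\tilde y_0$.

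To match the asserted chain complex, I would identify $\overline C_*(V) \cong \overline C_*(V/\Gamma) \otimes_{\mathbb Z} \mathbb Z\Gamma$ as $\mathbb Z\Gamma$-chain complexes, which is immediate from the description of $V$ as a $\Gamma$-CW complex obtained from $v_0$ by attaching free cell orbits. Then the $\widetilde Y$-part of $\widetilde X$ contributes $C_*(\widetilde Y) = C_*(E\Gamma)$, and the $\Gamma$-indexed family of attached copies of $V/\Gamma$ contributes $\overline C_*(V/\Gamma) \otimes \mathbb Z\Gamma \cong \overline C_*(V)$, giving the right additive splitting of $C_*(\widetilde X)$.

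The one point requiring a check—though not a real obstacle—is that the differential splits as a direct sum, i.e., that boundaries of the attached $V/\Gamma$-cells never pick up a term at the basepoint vertices $\Gamma \cdot \tilde y_0 \subset C_0(\widetilde Y)$. This is automatic: since $V/\Gamma$ has no cells in dimensions $1, \dots, r-1$, the cellular boundary of any cell of $V/\Gamma$ in dimension $\geq r \geq 2$ lands in $\overline C_{*-1}(V/\Gamma)$ with no $0$-cell component. Because all the serious work is already done in Theorem \ref{rationalmoorespace}, I do not anticipate any difficulty beyond routine bookkeeping here.
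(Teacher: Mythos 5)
There is a genuine gap, and it is at exactly the point you flag as "routine bookkeeping." You identify $\overline C_*(V)\cong\overline C_*(V/\Gamma)\otimes_{\mathbb Z}\mathbb Z\Gamma$ \emph{as chain complexes}, but this is false. As graded $\mathbb Z\Gamma$-modules the identification is fine, since $V$ has free cell orbits above dimension $0$. As chain complexes it is not: the differential $\partial_V$ on $\overline C_*(V)=F_*$ is a $\mathbb Z\Gamma$-linear map whose matrix has genuine group-ring entries (indeed $(\overline C_*(V),\partial_V)=(F_*,Nf_*)$ is a free $\mathbb Z\Gamma$-resolution of a multiple of $D$, so the entries had better not all lie in $\mathbb Z$), whereas the differential in your $\widetilde X=\widetilde Y\cup_{\Gamma\cdot\tilde y_0}(\Gamma$-many copies of $V/\Gamma)$ on those cells is $\partial_{V/\Gamma}\otimes\mathrm{id}_{\mathbb Z\Gamma}$, which has only integer entries, since each attached copy of $V/\Gamma$ is a genuine subcomplex and all its attaching maps land inside that copy. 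Said differently, $\partial_{V/\Gamma}$ is obtained from $\partial_V$ by applying the augmentation $\mathbb Z\Gamma\to\mathbb Z$, and inducing that back up to $\mathbb Z\Gamma$ does not recover $\partial_V$. A minimal example: if $\Gamma=\mathbb Z=\langle t\rangle$ and $\partial_V(e)=(1-t)f$, then $\partial_{V/\Gamma}(\bar e)=0$, so $\partial_{V/\Gamma}\otimes\mathrm{id}$ is the zero map while $\partial_V$ is injective.

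Because of this, the universal cover of $Y\vee(V/\Gamma)$ has reduced rational homology $\overline H_*(V/\Gamma;\mathbb Q)\otimes\mathbb Q\Gamma$, a free $\mathbb Q\Gamma$-module spread over many dimensions, rather than $D\otimes\mathbb Q$ concentrated in dimension $r$; so this $X$ is not the model the corollary needs (it is, for instance, useless as input to Theorem \ref{poincarepair}). The paper's proof avoids this by not wedging on a fixed quotient complex: it starts from $E\Gamma$ and attaches the free $\Gamma$-modules $F_r,F_{r+1},\dots,F_{r+d}$ one at a time by $\Gamma$-equivariant attaching maps chosen so that on cellular chains they realize the actual $\mathbb Z\Gamma$-boundary maps $\partial_V$. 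The possibility of choosing such attaching maps at each stage is exactly the content of the induction carried out there (using the exact sequence $F_{k+1}\to\pi_k(E\Gamma\cup F_r\cup\dots\cup F_k)\to\pi_k(V)\to 0$ on rational homotopy), and it is the part of the argument your proposal bypasses.
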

\begin{proof}
Pick finitely many generators $e_1,\dots,e_n$ for $F_r$, attach them as a finite wedge of spheres $\vee_{i=1}^nS^r$ at a vertex of $E\Gamma$ and denote by $E\Gamma\cup F_r:=E\Gamma\cup\Gamma(\vee_{i=1}^n S^r)$ the $\Gamma$-orbit of this. Next, equivariantly attach the cells $F_{r+1}$ (we can do this because $F_r=H_r(E\Gamma\cup F_r)=\pi_r(E\Gamma\cup F_r)$) and let $E\Gamma\cup F_r\cup F_{r+1}\ra V$ be the $\Gamma$-map obtained by collapsing $E\Gamma$ to a point. On rational homotopy groups we have an exact sequence of $\Gamma$-modules 
$$
F_{r+2}\ra\pi_{r+1}(E\Gamma\cup F_r\cup F_{r+1})\ra\pi_{r+1}(V)\ra 0,
$$
so we can attach $F_{r+2}$ to kill it off, extend to a $\Gamma$-map $E\Gamma\cup F_r\cup F_{r+1}\cup F_{r+2}\ra V$ an continue in this manner to get a locally finite complex $\widetilde X:=E\Gamma\cup F_r\cup\dots\cup F_{r+d}$ with a free $\Gamma$-action and a rational equivalence $\widetilde X\ra V$. Let $X=\widetilde X/\Gamma$ be the quotient. 
\end{proof}
\section{\label{dualitysection}Duality groups and Poincare spaces}
\subsection{Duality groups} 
Suppose that $\Gamma$ has a finite classifying space $B\Gamma$. It is a {\it $d$-dimensional duality group} if $H^*(B\Gamma;\mathbb Z\Gamma)$ is non-zero only in dimension $d$. The module $D\cong H^d(B\Gamma;\mathbb Z\Gamma)$ is called the {\it dualizing module} of the group $\Gamma$. Note that the the $\mathbb Z\Gamma$-modules $C^*(B\Gamma;\mathbb Z\Gamma)=Hom_{\Gamma}(C_*(E\Gamma),\mathbb Z\Gamma)=C_*(E\Gamma)^*$ used in computing group cohomology with group ring coefficients are free and finitely generated (because $B\Gamma$ is finite) so the dualizing module has a natural resolution by free, finitely generated $\mathbb Z\Gamma$-modules 
$$
\begin{array}{ccccccccccc}
0&\ra& C^{0}(B\Gamma;\mathbb Z\Gamma)&\ra&\dots&\ra &C^d(B\Gamma;\mathbb Z\Gamma)&\ra& H^d(B\Gamma;\mathbb Z\Gamma)&\ra& 0,\\
&&||&&&&||&&||&&\\
0&\ra&F_{r+d}&\ra&\dots&\ra&F_r&\ra&D&\ra&0.
\end{array}
$$

\begin{remark}
If $\Gamma$ is a finite type $d$-dimensional duality group and $D$ is its dualizing module, then for the complex $\widetilde X$ constructed in Corollary \ref{lfcomplex} the group of cellular chains has the form
$$
C_*(\widetilde X)=C_*(E\Gamma)\oplus C_{r+d-*}(E\Gamma)^*.
$$ 
This suggests $X$ is some kind of self-dual object of dimension $r+d$. The main result of this section (Theorem \ref{poincarepair}) says that this is, in fact, the case. 
\end{remark}
\begin{example}
Finite index torsion free subgroups of lattices in locally symmetric spaces, mapping class groups and $\Out(F_n)$ are prominent examples of duality groups.
\end{example}
\subsection{Poincare complexes and Poincare pairs}
A finite complex $X$ is an {\it $n$-dimensional Poincare complex} if $H_n(X;\mathbb Z)\cong\mathbb Z$ and the generator $[X]\in H_n(X;\mathbb Z)$ defines a cap product isomorphism
\begin{equation}
\label{poincare}
H^*(X;\mathbb Z\Gamma)\stackrel{\cap [X]}\cong H_{n-*}(X;\mathbb Z\Gamma).
\end{equation}
A $\pi_1$-isomorphism $f:X\ra Y$ of finite, connected complexes is an {\it $(n+1)$-dimensional Poincare pair\footnote{We can always replace the map $f:X\ra Y$ by an inclusion by taking the mapping cylinder.}} if $H_{n+1}(Y,X;\mathbb Z)\cong\mathbb Z$ and the generator $[Y,X]\in H_{n+1}(Y,X;\mathbb Z)$ defines cap product isomorphisms
\begin{eqnarray}
\label{lefshetz1}
H^*(Y;\mathbb Z\Gamma)&\stackrel{\cap[Y,X]}\cong& H_{n+1-*}(Y,X;\mathbb Z\Gamma),\\
\label{lefshetz2}
H^*(Y,X;\mathbb Z\Gamma)&\stackrel{\cap[Y,X]}\cong&H_{n+1-*}(Y;\mathbb Z\Gamma).
\end{eqnarray}
Rational Poincare complexes and rational Poincare pairs are defined in exactly the same way with $\mathbb Z$ replaced by $\mathbb Q$ everywhere.
\begin{remark}
It follows from this that the connecting homomorphism $H_{n+1}(Y,X;\mathbb Z)\stackrel{\partial}\ra H_n(X;\mathbb Z)$ is an isomorphism and if we let $[X]=\partial[Y,X]$ then $X$ is an $n$-dimensional Poincare complex with Poincare duality given by $\cap[X]$. 
\end{remark} 


\subsection{Stabilization}
If $(M,\partial)$ is a compact aspherical $n$-manifold-with-boundary then $(M\times I, \partial(M\times I))=(M\times I,M\cup_{\partial} M)$ is a compact aspherical $(n+1)$-manifold with boundary. Suppose $\partial$ is connected and $\partial\hookrightarrow M$ is a $\pi_1$-isomorphism. Then the connecting map in the Mayer-Vietoris sequence $\overline H_{*+1}(\widetilde M\cup_{\widetilde \partial}\widetilde{M})\ra\overline H_*(\widetilde\partial)$ is an isomorphism of $\Gamma$-modules. This motivates the following ``stabilization'' procedure. 

\begin{definition}
Suppose that $X$ is a finite, connected complex with fundamental group $\Gamma$ and 
classifying map $X\ra B\Gamma$. Let $SX:=B\Gamma\cup_XB\Gamma$ with classifying map $SX\ra B\Gamma$ and call this the suspension of $X$. Denote by $S^kX\ra B\Gamma$ the complex obtained from $X$ by applying this suspension $k$-times. 
\end{definition} 
\begin{itemize}
\item
The Mayer-Vietoris sequence in homology of the universal cover gives an isomorphism of $\Gamma$-modules 
\begin{equation}
\label{stablemodule}
\overline H_{*+1}(\widetilde{SX};\mathbb Z)\cong\overline H_{*}(\widetilde{X};\mathbb Z).
\end{equation}
\item
The relative Mayer-Vietoris sequence in cohomology with group ring coefficients gives the isomorphism 
\begin{equation}
\label{relativecohomology}
H^*(B\Gamma,X;\mathbb Z\Gamma)\cong H^{*+1}(B\Gamma,SX;\mathbb Z\Gamma).
\end{equation}
\end{itemize}
Both of these bullets are also valid with $\mathbb Z$ replaced by $\mathbb Q$. 
\begin{lemma}[Poincare stabilization]
If $SX\ra B\Gamma$ is an $(n+1)$-dimensional (rational) Poincare pair then $X\ra B\Gamma$ is an $n$-dimensional (rational) Poincare pair.
\end{lemma}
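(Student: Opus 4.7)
The plan is to transport Poincar\'e duality from $SX\to B\Gamma$ to $X\to B\Gamma$ via two ``stabilization'' identifications that shift degree by one. The cohomological identification \eqref{relativecohomology} is already in place; the missing ingredient is its homological counterpart
\begin{equation*}
H_{*}(B\Gamma, SX; \mathbb Z\Gamma) \;\cong\; H_{*-1}(B\Gamma, X; \mathbb Z\Gamma).
\end{equation*}
Once both identifications are available, the two duality isomorphisms for $X\to B\Gamma$ fall out by composing the corresponding ones for $SX\to B\Gamma$ with these identifications.

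To construct the homology identification, I would first replace the classifying map $SX\to B\Gamma$ by a cofibration $SX\hookrightarrow Y$ with $Y\simeq B\Gamma$ (mapping cylinder), so $(Y,SX)\simeq(B\Gamma,SX)$. Inside $Y$ sits the decomposition $SX=B_1\Gamma\cup_X B_2\Gamma$, and each inclusion $B_i\Gamma\hookrightarrow Y$ is a homotopy equivalence (the composition with the retraction $Y\to B\Gamma$ is the identity), so $H_{*}(Y,B_2\Gamma;\mathbb Z\Gamma)=0$. The long exact sequence of the triple $(Y,SX,B_2\Gamma)$ then forces the connecting map $H_{*}(Y,SX;\mathbb Z\Gamma)\xrightarrow{\partial} H_{*-1}(SX,B_2\Gamma;\mathbb Z\Gamma)$ to be an isomorphism, and excision applied to $(SX,B_2\Gamma)=(B_1\Gamma\cup_X B_2\Gamma, B_2\Gamma)$ identifies the target with $H_{*-1}(B_1\Gamma,X;\mathbb Z\Gamma)$. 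The argument runs with any coefficients, in particular with $\mathbb Z$ or $\mathbb Q$.

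Specializing to $\mathbb Z$-coefficients, the hypothesis $H_{n+1}(B\Gamma,SX;\mathbb Z)\cong\mathbb Z$ immediately gives $H_n(B\Gamma,X;\mathbb Z)\cong\mathbb Z$, and I would define $[B\Gamma,X]$ to be the image of $[B\Gamma,SX]$ under this identification. The two required Poincar\'e duality isomorphisms for $X\to B\Gamma$ are then the composites
\begin{align*}
H^k(B\Gamma;\mathbb Z\Gamma)&\xrightarrow{\cap[B\Gamma,SX]} H_{n+1-k}(B\Gamma,SX;\mathbb Z\Gamma)\xrightarrow{\sim} H_{n-k}(B\Gamma,X;\mathbb Z\Gamma),\\
H^k(B\Gamma,X;\mathbb Z\Gamma)&\xrightarrow{\sim} H^{k+1}(B\Gamma,SX;\mathbb Z\Gamma)\xrightarrow{\cap[B\Gamma,SX]} H_{n-k}(B\Gamma;\mathbb Z\Gamma),
\end{align*}
where the cap product arrows are \eqref{lefshetz1}, \eqref{lefshetz2} and the unlabeled arrows come from the homology identification above and from \eqref{relativecohomology}, respectively.

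The step I expect to be fiddly rather than genuinely hard is checking that these composites really are cap product with $[B\Gamma,X]$, not just some isomorphism. This is a Leibniz-type compatibility: the connecting homomorphism producing the homology identification and the coboundary in \eqref{relativecohomology} both have to intertwine $\cap[B\Gamma,SX]$ and $\cap[B\Gamma,X]$. Such compatibilities for cap products in relative Mayer--Vietoris sequences are standard bookkeeping, and once confirmed the rational statement follows by replacing $\mathbb Z$ with $\mathbb Q$ throughout.
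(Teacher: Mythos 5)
Your proposal is correct, and it is organized genuinely differently from the paper's argument. The paper's proof first observes that $SX$ is an $n$-dimensional Poincar\'e complex (since $(B\Gamma,SX)$ is an $(n+1)$-dimensional pair), then runs the long exact sequences of the pair $(SX,B\Gamma)$ in homology and cohomology with $\mathbb Z\Gamma$-coefficients, uses the Poincar\'e duality of $SX$ as a vertical arrow in those ladders, and extracts the two required isomorphisms for $(B\Gamma,X)$ by a diagram chase, with the known vanishings $H^n(B\Gamma;\mathbb Z\Gamma)\cong H_1(B\Gamma,SX;\mathbb Z\Gamma)=0$ and $H_0(B\Gamma,X;\mathbb Z\Gamma)=0$ pinning down the ends. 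Your proposal instead never passes through the Poincar\'e duality of $SX$: you construct a homological degree-shift $H_*(B\Gamma,SX;\mathbb Z\Gamma)\cong H_{*-1}(B\Gamma,X;\mathbb Z\Gamma)$ dual to \eqref{relativecohomology}, define $[B\Gamma,X]$ as the image of $[B\Gamma,SX]$ under it, and compose the $(n+1)$-dimensional duality of $(B\Gamma,SX)$ directly with these shifts. This is cleaner in that it makes the ``destabilization'' a single well-defined map, and it highlights that the statement is literally a transport of structure. The price is that the work you flag as ``fiddly'' --- verifying that the composites are cap product with $[B\Gamma,X]$ and not merely abstract isomorphisms --- is genuinely the whole content, and should not be waved away: one has to check the Leibniz-type identity $\partial(\alpha\cap\xi)=\pm(\alpha|_{SX})\cap\partial\xi$ for the connecting map of the triple $(Y,SX,B_2\Gamma)$, then that excision $(B_1\Gamma,X)\hookrightarrow(SX,B_2\Gamma)$ intertwines the cap products (naturality), and the corresponding statement on the cohomology side $\delta(\beta)\cap\xi=\pm i_*(\beta\cap\partial\xi)$. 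These are standard, but they are the theorem in this packaging; the paper's route distributes the same bookkeeping into the more familiar compatibility of cap products with the long exact sequence of a pair. You should also make sure the cohomological shift you cite from \eqref{relativecohomology} (built from the relative Mayer--Vietoris sequence) agrees, at least up to sign, with the triple-sequence construction you use on the homology side, since otherwise the ``same'' fundamental class is not being capped on both sides.
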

\begin{proof}
First, since $(B\Gamma,SX)$ is an ($n+1$)-dimensional Poincare pair the relative cohomology $H^*(B\Gamma,SX;\mathbb Z\Gamma)\cong H_{n+1-*}(B\Gamma;\mathbb Z\Gamma)$ is concentrated in a single dimension $n+1$ and is equal to $\mathbb Z $. By the isomorphism (\ref{relativecohomology}), $H^*(B\Gamma,X;\mathbb Z\Gamma)$ is concentrated in dimension $n$ and equals $\mathbb Z$. We will first check that the cap product gives an isomorphism between this group and $H_{n-*}(B\Gamma;\mathbb Z\Gamma).$

Since $(B\Gamma,SX)$ is an $(n+1)$-dimensional Poincare pair, the suspension $SX$ is an $n$-dimensional Poincare complex. We will look at the long exact sequences in homology and cohomology (with group ring coefficients) for the pair $(SX,B\Gamma)$ where $B\Gamma\hookrightarrow SX$ is the inclusion of the left copy into $SX=B\Gamma\cup_XB\Gamma$. 

In low dimensions the long exact sequences fit into the diagram
\begin{equation}
\begin{array}{ccccc}
\mathbb Z&&&&\\
||&&&&\\
H^n(B\Gamma,X;\mathbb Z\Gamma)&&&&0\\
||&&&&||\\
H^n(SX,B\Gamma;\mathbb Z\Gamma)&\ra& H^n(SX;\mathbb Z\Gamma)&\ra&H^n(B\Gamma;\mathbb Z\Gamma),\\
\downarrow&&||&&\\
H_0(B\Gamma;\mathbb Z\Gamma)&=&H_0(SX;\mathbb Z
\Gamma)&&\\
||&&||&&\\
\mathbb Z&&\mathbb Z&&
\end{array}
\end{equation}
with the middle vertical isomorphism the Poincare duality isomorphism for $SX$ and the top left vertical isomorphism coming from excision. Moreover, $H^n(B\Gamma;\mathbb Z\Gamma)\cong H_1(B\Gamma,SX;\mathbb Z\Gamma)=0$ because $SX\ra B\Gamma$ is a $\pi_1$-isomorphism of connected complexes. This diagram shows that cap product map $H^n(B\Gamma,X;\mathbb Z\Gamma)\ra H_0(B\Gamma;\mathbb Z\Gamma)$ is an isomorphism. 

It remains to check that the cap product maps $H^{n-*}(B\Gamma;\mathbb Z\Gamma)\ra H_{*}(B\Gamma,X;\mathbb Z\Gamma)$ are also isomorphisms. For $*=0$ this follows because $H_0(B\Gamma,X;\mathbb Z\Gamma)=0$ (since $X\ra B\Gamma$ is a $\pi_1$-isomorphism of connected complexes).
 
To get the isomorphism for $*>0$ we again look at the long exact sequences, this time in higher dimensions. \underline{In the diagram below, the coefficients are always in the group ring $\mathbb Z\Gamma$.} They are omitted from the notation in order to save space and make the diagram (slightly) more readable.  
\begin{equation}
\label{relative}
\begin{array}{ccccccc}
&&&&&&H^{n+1-*}(B\Gamma,X)\\
&&&&&&||\\
&\ra& H^{n-*}(SX)&\ra&H^{n-*}(B\Gamma)&\ra& H^{n+1-*}(SX,B\Gamma)\\
&&||&&\downarrow&&\downarrow\\
H_{*}(B\Gamma)&\ra&H_{*}(SX)&\ra&H_{*}(SX,B\Gamma)&\ra&H_{*-1}(B\Gamma),\\
||&&&&||&&\\
0&&&&H_{*}(B\Gamma,X)&&
\end{array}
\end{equation}
For $*=1$, the right column of the above diagram consists of $\mathbb Z$'s and the right horizontal maps are the zero maps, so we get a cap product isomorphism $H^{n-1}(B\Gamma;\mathbb Z\Gamma)\cong H_1(B\Gamma,X;\mathbb Z\Gamma)$. Finally, for $*>1$ everything in the right column is zero, so we again find that the cap product gives isomorphisms $H^{n-*}(B\Gamma;\mathbb Z\Gamma)\cong H_*(B\Gamma,X;\mathbb Z\Gamma)$. This completes the proof that $(B\Gamma,X)$ is an $n$-dimensional Poincare pair. 

The same argument clearly work rationally with $\mathbb Z$ replaced by $\mathbb Q$ everywhere. 
\end{proof}
\begin{remark}
Looking at the same diagrams will convince the reader of the converse: If $X\ra B\Gamma$ is an $n$-dimensional Poincare pair, then $SX\ra B\Gamma$ is an $(n+1)$-dimensional Poincare pair.\footnote{We don't actually need this direction for our argument.}
\end{remark}

\begin{theorem}
\label{poincarepair}
Let $\Gamma$ be a $d$-dimensional duality group with finite classifying space $B\Gamma$ and dualizing module $D$. Let $X$ be a finite complex with fundamental group $\Gamma$. Suppose that $\overline H_*(\widetilde X;\mathbb Q)$ is concentrated in dimension $r\geq 2$ and $H_r(\widetilde X;\mathbb Q)\cong D\otimes\mathbb Q$ as a $\Gamma$-module.\footnote{Such an $X$ always exists by Corollary \ref{lfcomplex}.} Then,
\begin{itemize}
\item
for $r>d$ all such $X$ with $\overline H_r(\widetilde X;\mathbb Q)\cong D\otimes\mathbb Q$ are rationally equivalent, and
\item
any such $X\ra B\Gamma$ is a $(d+r+1)$-dimensional rational Poincare pair. 
\end{itemize}
\end{theorem}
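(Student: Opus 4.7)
My plan is to handle the two bullets using rational homotopy/Postnikov theory for the first and an explicit computation reached via Poincare stabilization for the second. The common underlying fact is that a duality group $\Gamma$ of dimension $d$ has rational cohomological dimension $d$, so $H^{>d}(\Gamma;-)$ vanishes rationally.

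For the first bullet, view $X\ra B\Gamma$ as a rational fibration with simply connected fiber $\widetilde X$, whose rational homology $D\otimes\Q$ is concentrated in the single degree $r$. The rational homotopy type of $\widetilde X$ is then determined by this $\Gamma$-module: a simply connected rational space with cohomology in one degree is rationally a wedge of spheres, and the $\Gamma$-action on the higher rational homotopy groups is determined by free Lie brackets on $D\otimes\Q$ (using the cellular Lie model). The remaining data classifying the rational fibration up to equivariant rational equivalence are the rational Postnikov $k$-invariants in $H^{n+1}(\Gamma;\pi_n(\widetilde X)\otimes\Q)$ for $n\geq r$. Since $r>d$ we have $n+1>d$ throughout, so these groups all vanish, and any two such $X$ are rationally equivalent over $B\Gamma$.

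For the second bullet I would first apply stabilization to reduce to the case $r>d$: by (\ref{stablemodule}) each $S^kX$ again satisfies the hypotheses, with $r$ replaced by $r+k$, and the Poincare stabilization lemma lets me deduce the Poincare pair property for $X\ra B\Gamma$ from that of $S^kX\ra B\Gamma$. Once $r+k>d$, the first bullet identifies $S^kX$ up to rational equivalence with the explicit $X_0$ of Corollary \ref{lfcomplex}, whose universal cover has chain complex $C_*(E\Gamma;\Q)\oplus F_*\otimes\Q$ with $F_{r+d-i}=C_i(E\Gamma)^*$. A direct computation using this decomposition, together with the duality isomorphisms of $\Gamma$, yields $H^*(X_0;\Q\Gamma)=D\otimes\Q$ in degree $d$ and $\Q$ in degree $r+d$; the long exact sequence of the pair then forces $H^*(B\Gamma,X_0;\Q\Gamma)=\Q$ concentrated in degree $d+r+1$, and a Tor argument gives $H_{d+r+1}(B\Gamma,X_0;\Q)=H_d(\Gamma;D\otimes\Q)=\Q$. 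I would take $[B\Gamma,X_0]$ to be the generator of the latter coming from the duality class in $H_d(\Gamma;D)\cong\Z$; cap product with it is implemented at the chain level by the pairing $C_*(E\Gamma)\otimes F_*\ra\Q$ dual to $F_{r+d-i}=C_i(E\Gamma)^*$.

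The hardest step will be this last verification that cap product with the chosen fundamental class induces the Poincare duality isomorphism: after the ranks of source and target match, one still has to check that the chain-level pairing is an isomorphism rather than zero. The cleanest route I see is a 5-lemma argument paralleling the proof of Poincare stabilization, comparing the long exact sequences of the pair $(B\Gamma,X_0)$ to the duality isomorphisms of $\Gamma$; a secondary subtlety is justifying the equivariant rational Postnikov classification used in the first bullet, since $\Gamma$ is typically far from nilpotent, but this can be bypassed by carrying out an obstruction-theoretic construction of the rational equivalence directly from the chain-level decomposition above.
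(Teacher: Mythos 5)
Your first bullet points in the right direction but the Postnikov step is not correct as stated: the obstructions to building a rational equivalence do not all live in $H^{n+1}(\Gamma;\pi_n(\widetilde X)\otimes\mathbb Q)$. The relevant obstruction groups for extending a cellular map from a standard model $X_0$ to $X$ are $H^{n+1}(X_0;\pi_n(X))$, and for $n\geq r$ the map $X_0\to B\Gamma$ is no longer $(n+1)$-connected, so these are not group cohomology of $\Gamma$; the cohomological dimension bound $\cd\Gamma=d$ alone does not make them vanish. The paper's obstruction-theoretic argument (which your ``fallback'' gestures at) works for a different reason: because $\widetilde X$ is rationally a wedge of $r$-spheres, the rational homotopy of $\widetilde X$ is a free graded Lie algebra on $D\otimes\mathbb Q$ in degree $r$, hence $\pi_{r+k}(X)$ is a torsion group of bounded exponent for all $0<k<r-1$. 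Since $X_0$ has dimension $r+d\leq 2r-1$ (using $r>d$), all obstruction coefficient modules encountered are torsion of bounded exponent, and can be killed by multiplying attaching maps. This torsion fact is the missing ingredient in your sketch; without it the obstruction-theoretic fallback does not close.

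For the second bullet you have correctly identified the hardest step --- showing the cap product is an isomorphism rather than merely matching ranks --- but your proposed route does not resolve it, and the paper avoids it entirely by a different idea. Rather than trying to implement cap product on the chain decomposition $C_*(E\Gamma)\oplus F_*$ (where the diagonal approximation is not block-diagonal, so the naive pairing is not obviously the cap product), the paper embeds $B\Gamma$ in $\mathbb R^{d+r'+1}$ with $r'>d$, $r'\geq r$, and takes the regular neighborhood $(M,\partial)$. This is a genuine compact aspherical manifold-with-boundary, so $\partial\to B\Gamma$ is a genuine $(d+r'+1)$-dimensional Poincare pair; a Lefschetz-duality computation identifies $\overline H_*(\widetilde\partial)$ with $D$ concentrated in degree $r'$; the first bullet then gives a rational equivalence between $\partial\to B\Gamma$ and $S^{r'-r}X\to B\Gamma$, transporting the Poincare pair structure, and the Poincare stabilization lemma destabilizes to $X\to B\Gamma$. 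Your stabilize-first strategy and use of the Poincare stabilization lemma are correct, but getting the Poincare pair property for the stabilized model by a direct cap-product computation is precisely the work that using a manifold lets you skip, and the 5-lemma comparison you propose would still require knowing a priori that some cap product is an isomorphism, which is the content you are trying to prove. Keep the stabilization scaffolding, replace the direct computation with the regular-neighborhood argument.
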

\begin{proof}
To prove the first bullet, we will construct a rational equivalence from a standard rational model $X_0$ whose universal cover has cell complex $C_*(E\Gamma)\oplus F_*$ with boundary maps that are positive integer multiples of $\partial_{E\Gamma}\oplus f_*$. We will do this one skeleton at a time, via obstruction theory. The key point is that $\widetilde X$ is rationally homotopy equivalent to a wedge of $r$-spheres $\vee S^r$ so $\pi_{r+k}(X)$ is a torsion group of bounded exponent for all $0<k<r-1$.
 
We need to define the map $X_0\ra X$. First, we can build a $\pi_1$-isomorphism $X_0^{(r-1)}\ra X$ because $\widetilde X$ is rationally $(r-1)$-connected (if necessary, we replace the attaching maps in $C_*(X_0)$ by multiples in order to do this). Next, we extend it to $\widetilde\phi_r:F_r\ra Z_r(\widetilde X)$ via the $\Gamma$-map that maps in the generators of $D=H_r(\widetilde X)$. The obstruction to extending the map $\phi_r:X_0^{(r)}\ra X$ to the $(r+1)$-skeleton lies in the cohomology group $H^{r+1}(X_0;\pi_r(X))$. Since (some integer multiple of) the image of $F_{r+1}\ra F_r$ is in the kernel of the composite map $F_r\ra Z_r(\widetilde X)\ra H_r(\widetilde X)=\pi_r(X)=D$ the obstruction vanishes and we can extend to a $\Gamma$-map $\widetilde \phi_{r+1}:F_{r+1}\ra C_{r+1}(\widetilde X)$. Further obstructions to extending the map $\phi_{r+k}:X_0^{(r+k)}\ra X$ from the $(r+k)$-skeleton to the $(r+k+1)$-skeleton lie in $H^{r+k+1}(X_0;\pi_{r+k}(X))$ and for $0<k<r-1$ all of the coefficients in these cohomology groups are torsion groups of bounded exponent. Thus, after taking multiples of the attaching maps in $X_0$ if necessary, we can make sure that all these obstructions vanish and extend to a map $X_0^{(2r-1)}\ra X$. Since $d<r$, this constructs a $\pi_1$-isomorphism from an $(r+d)$-dimensional complex $X_0\ra X$. Since we constructed it to be an isomorphism on rational cohomology of universal covers $H_*(\widetilde X_0;\mathbb Q)\cong H_*(\widetilde X;\mathbb Q)$, the map is a rational equivalence. 

For any two such complexes $X$ and $Y$, we may take the standard models to be equal ($X_0=Y_0$) because we can take common multiples of the boundary maps. Thus the complexes $X$ and $Y$ are rationally equivalent, which proves the first bullet.

To prove the second bullet, embedd $B\Gamma$ in a high dimensional Euclidean space $\mathbb R^{d+r'+1}$ with $r'>d$ and $r'\geq r$. Then the regular neighborhood of $B\Gamma$ is a compact aspherical $(d+r'+1)$-manifold-with-boundary $(M,\partial)$ with fundamental groups $\pi_1M=\pi_1\partial=\Gamma$. The long exact homology sequence for $(\widetilde M,\widetilde\partial)$, Lefshetz duality, and the interpretation of compactly supported cohomology of the universal cover of a finite complex as the cohomology with group ring coefficients 
\begin{equation}
\label{duality}
\overline H_*(\widetilde\partial)\cong H_{*+1}(\widetilde M,\widetilde\partial)\cong H^{d+r'-*}_c(\widetilde M)\cong H^{d+r'-*}(M;\mathbb Z\Gamma)\cong H^{d+r'-*}(B\Gamma;\mathbb Z\Gamma)
\end{equation}
implies that $\overline H_*(\widetilde\partial)$ is concentrated in dimension $r'$ and equals the dualizing module $D$. On the other hand, (\ref{stablemodule}) implies that $\overline H_{*}(\widetilde{ S^{r'-r}X};\mathbb Q)$ is also concentrated in dimension $r'$ and equals the rational dualizing module $D\otimes\mathbb Q$. Thus, the first bullet implies $\partial\ra B\Gamma$ is rationally equivalent to the stabilization $S^{r'-r}X\ra B\Gamma$. Since $\partial\ra B\Gamma$ is a rational $(d+r'+1)$-dimensional Poincare pair, $S^{r'-r}X\ra B\Gamma$ is as well. So, by the above Lemma $X\ra B\Gamma$ is a rational $(d+r+1)$-dimensional Poincare pair.
\end{proof}

\begin{remark}
Informally speaking, Theorem \ref{poincarepair} says that all rational models for the boundary of $B\Gamma$ are stably rationally equivalent. Unstably this is not true. For instance, for the model $X$ constructed in Corollary \ref{lfcomplex}, the classifying map $X\ra B\Gamma$ has a section (it is an ``untwisted model''). On the other hand, the classical $2d$-dimensional manifold models $(M,\partial)$ for $BF_2^d$ that occur in nature (Cartesian products of surfaces-with-boundary) have the property that the interior $M$ cannot be homotoped into the boundary $\partial$, and so the classifying map $\partial\ra BF_2^d$ does not have a homotopy section. The first obstruction is an element of $H^d(M;D)\cong H_0(M;D\otimes D)$ which can be geometrically interpreted as a ``self-intersection'' $e\cap e$ of the fundamental class $e\in H_d(M;D)$.
\end{remark}


\section{\label{surgerysection}Surgery}
In the first two subsections we sketch how surgery theory works. Good introductions are \cite{luecksurgery, ranicki} and the first half of \cite{weinbergerbook}, while the classic reference is \cite{wall}. 
\subsection{Constructing closed manifolds via surgery}
The surgery method for constructing closed manifolds homotopy equivalent to a given finite complex $X$ consists of three steps.
\begin{enumerate} 
\item 
Verify that $X$ has the global Poincare duality of an $n$-dimensional manifold (i.e. that $X$ is a Poincare complex). 

\item Find a candidate for the normal bundle. That is, find a disk bundle $D^k\ra\nu\ra X$ and a degree one\footnote{Since $X$ is a Poincare complex the Thom space of $\nu$ has $H_{n+k}(Th(\nu))\cong\mathbb Z$. A map $f:S^{n+k}\ra Th(\nu)$ has degree one if $f_*[S^{n+k}]$ generates $H_{n+k}(Th(\nu))$.} map $f:S^{n+k}\ra Th(\nu)$ to the Thom space of this bundle.\footnote{For large enough $k$, we can embedd $X\hookrightarrow S^{n+k}$. If $X$ is an actual manifold, then its regular neighborhood is an actual bundle $\nu$ and collapsing the complement of the regular neighborhood to a point gives a degree one map $S^{n+k}\ra Th(\nu)$.} This bundle is pulled back from the universal $k$-plane bundle over a Grassmannian of $k$-planes $\nu_k\ra Gr_k$. The transverse inverse image of the zero section $Gr_k\hookrightarrow Th(\nu_k)$ under the map $S^{n+k}\ra Th(\nu)\ra Th(\nu_k)$ gives a manifold $M$ and a degree one bundle map $M\ra X$. 
\item 
Modify $M$ and this map to make it into a $\pi_1$-isomorphism and inductively kill off the homology kernel (doing surgery modifications to the domain $M$ while keeping it a manifold) to finally get a manifold homotopy equivalent to $X$.
\end{enumerate}
 
All three of these steps might fail. The complex $X$ might not have Poincare duality, there might not be any candidate normal bundle\footnote{That is, for every bundle the Thom class is not spherical.} and one might not be able to do the surgery modifications to kill off the homology kernel.\footnote{The obstructions to completing this last step are the well-studied surgery obstruction groups (Wall's $L$-groups). We will not need them in this paper.}

\subsection{\label{construction}Constructing compact manifolds-with-boundary}
One can also look at the relative situation of constructing compact $(n+1)$-dimensional manifolds-with-boundary homotopy equivalent to a given pair of finite complexes $f:X\ra Y$. First, we need to check that $(Y,X)$ is a $(n+1)$-dimensional Poincare pair. Second, one needs a candidate normal bundle $D^k\ra\nu\ra Y$, i.e. a bundle with a degree one map\footnote{This means the generator of the homology $H_{n+1+k}(Th(\nu),Th(f^*\nu))\cong\mathbb Z$ is represented by this map.} $\phi:(D^{n+k+1},S^{n+k})\ra(Th(\nu),Th(f^*\nu))$. Given such a map one takes the transverse inverse of $Gr_k$ under $(D^{n+1+k},S^{n+k})\ra Th(\nu_k)$ and gets an $(n+1)$-manifold-with-boundary $(M,\partial)$ and  a degree one bundle map $(M,\partial M)\ra (Y,X)$. 
As before one turns all the maps into $\pi_1$-isomorphisms and then tries to kill off the relevant homology kernel\footnote{That is the kernel $K_*(M,\partial)$ of the map $H_*(M,\partial;\mathbb Z\pi_1)\ra H_*(Y,X;\mathbb Z\pi_1)$.} by surgery modifications to obtain a compact manifold with boundary homotopy equivalent to $(Y,X)$. A (the?) fundamental result of non-simply connected surgery is that, for a $\pi_1$-isomorphism $f:X\ra Y$ and $n\geq 5$, this last surgery modification step always works.

\begin{theorem}[Wall's $\pi-\pi$ theorem, 3.3 in \cite{wall}]
Suppose $f:X\ra Y$ is a $\pi_1$-isomorphism, $(Y,X)$ is an $(n+1)$-dimensional Poincare pair, $n\geq 5$, and there is a disk bundle $D^k\ra\nu\ra Y$ with a degree one map $(D^{n+k+1},S^{n+k})\ra(Th(\nu),Th(f^*\nu))$. Then $(Y,X)$ is homotopy equivalent to an $(n+1)$-dimensional compact manifold with boundary $(N,\partial)$. 
\end{theorem}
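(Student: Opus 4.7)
The proof I would give is the standard surgery-theoretic argument, proceeding in three stages.

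First, use the degree-one map $(D^{n+k+1},S^{n+k})\to(Th(\nu),Th(f^*\nu))$ and make it transverse to the zero section $Y\hookrightarrow Th(\nu)$ (and simultaneously $X\hookrightarrow Th(f^*\nu)$). The transverse preimage yields a compact $(n+1)$-manifold-with-boundary $(M,\partial M)$ together with a degree-one normal map $F\colon (M,\partial M)\to (Y,X)$ covered by bundle data. The boundary restriction is itself a degree-one normal map $\partial M\to X$.

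Second, do preliminary low-dimensional surgery on $(M,\partial M)$ to make $F$ and $F|_{\partial M}$ into $\pi_1$-isomorphisms: this is routine surgery on $0$- and $1$-handles, using that $X\to Y$ is already a $\pi_1$-iso so only finitely many generators and relations must be handled. Next, kill the surgery kernel $K_i(M;\mathbb{Z}\Gamma)$ in dimensions $i<(n+1)/2$ by representing classes as embedded framed spheres (general position applies since $2i+1\leq n$) and surgering them. Do the same for $\partial M$. This arranges that both $F$ and $F|_{\partial M}$ are highly connected and, by Poincaré–Lefschetz duality for the pair $(Y,X)$, that the remaining surgery kernel is concentrated in one or two middle dimensions.

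Third---and this is the heart of the $\pi$-$\pi$ theorem---use the $\pi$-$\pi$ hypothesis to absorb the middle-dimensional kernel into the boundary. Because $\pi_1(\partial M)\to\pi_1(M)$ is an isomorphism, the long exact sequence of the pair (with $\mathbb{Z}\Gamma$-coefficients) together with Poincaré–Lefschetz duality identifies $K_m(M,\partial M)\cong K_{n+1-m}(M)$, and allows every middle-dimensional class in $K_m(M)$ to be represented by an embedded framed disk $(D^m,S^{m-1})\hookrightarrow(M,\partial M)$ meeting $\partial M$ transversally in its boundary. Attaching a handle along this boundary sphere performs a surgery that kills the class without introducing new kernel of lower dimension. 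Iterating, the (finitely generated, stably free) middle-dimensional kernel is exhausted, yielding a $\pi_1$-isomorphism with vanishing kernel in all dimensions---hence a homotopy equivalence of pairs $(N,\partial)\simeq(Y,X)$.

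The main obstacle is the third step: representing middle-dimensional $\mathbb{Z}\Gamma$-classes by \emph{embedded} framed disks and cancelling double points requires the Whitney trick, which forces the dimension restriction $n\geq 5$ (so that $n+1\geq 6$ and Whitney disks of dimension $2$ embed generically in the ambient $\geq 6$-manifold). In the non-simply-connected setting the relevant intersection and self-intersection forms take values in $\mathbb{Z}\Gamma$ modulo the $\pi_1$-involution, and what the $\pi$-$\pi$ hypothesis ultimately ensures is that any such middle-dimensional class can be cleared to the boundary collar without algebraic obstruction---equivalently, that the relative $L$-group $L_{n+1}(\Gamma\to\Gamma)$ vanishes.
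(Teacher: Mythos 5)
Your proposal is correct and follows essentially the same route as the paper, which is itself just citing Wall and sketching the argument in a remark (for the even-dimensional case $n+1=2r$, via handle subtraction). Your outline — transversality to produce a degree-one normal map, surgery below the middle dimension, then using the $\pi$-$\pi$ hypothesis to represent the middle-dimensional kernel by embedded disks $(D^m,S^{m-1})\hookrightarrow(M,\partial M)$ and kill it — is the standard Wall argument. One small imprecision: the operation you describe as ``attaching a handle along this boundary sphere'' is really \emph{handle subtraction}, i.e.\ deleting a regular neighborhood $D^m\times D^{n+1-m}$ of the embedded disk from $M$, which performs surgery on $S^{m-1}\subset\partial M$ while shrinking $M$; attaching a handle to $M$ along $S^{m-1}$ would be the dual operation and does not kill the kernel class. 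The paper's remark is explicit about this (``cuts out small neighborhoods of these $r$-disks''). Your closing observation that the hypothesis amounts to $L_{n+1}(\Gamma\to\Gamma)=0$ is a correct reformulation and goes slightly beyond what the paper records.
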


\begin{remark}
A few words about the argument in the (easier) even dimensional $n+1=2r$ case: One does surgery below the middle dimension to kill homology kernels in dimensions $<r$,\footnote{This doesn't use either the fact that $(Y,X)$ is a Poincare pair, or that the map $\phi$ has degree one.} shows that, after attaching finitely many trivial $r$-handles if necessary,\footnote{This means taking the boundary-connect-sum of $(M,\partial)$ with finitely many copies of $S^r\times D^r$.} the homology kernel in dimension $r$ has a $\mathbb Z\Gamma$-basis represented by finitely many disjoint, embedded $r$-disks $(D^r,S^{r-1})\hookrightarrow (M,\partial)$,\footnote{Once $(M,\partial)\ra (Y,X)$ is a sufficiently connected degree one map of Poincare pairs one shows that the kernel $K_r(M,\partial)$ is a finitely generated, stably free $\mathbb Z\pi_1$-module. Attaching enough trivial $r$-handles makes it actually free, hence represented by immersed disks with isolated intersections and self-intersections. The $\pi_1$-isomorphism $X\ra Y$ lets one remove these intersections.} and cuts out small neighborhoods of these $r$-disks to get a manifold-with-boundary homotopy equivalent to $(Y,X)$. This method is sometimes called {\it handle subtraction}.
\end{remark}

\subsection{Rational surgery}
The previous remark suggests that Wall's argument works just as well for rational Poincare pairs and finite degree maps. This is, in fact, the case and goes by the name {\it surgery with coefficients} \cite{anderson} or {\it local\footnote{The word ``local'' means ``after inverting some primes''. We are doing things as locally as possible by inverting all primes.} surgery} \cite{taylorwilliams}. In particular, one gets a rational $\pi-\pi$ theorem as a direct consequence of Theorem 5.1 in \cite{anderson}.

\begin{theorem}[Rational $\pi-\pi$ theorem]
Suppose $f:X\ra Y$ is a $\pi_1$-isomorphism, $(Y,X)$ is an $(n+1)$-dimensional rational Poincare pair, $n\geq 5$, and there is a disk bundle $D^k\ra\nu\ra Y$ with a finite degree map $(D^{n+k+1},S^{n+k})\ra(Th(\nu),Th(f^*\nu))$. Then there is a $\pi_1$-isomorphism from a compact $(n+1)$-manfiold-with-boundary $(N,\partial)\ra(Y,X)$ that lifts to a rational homology isomorphism $(\widetilde N,\widetilde\partial)\ra (\widetilde Y,\widetilde X)$ of universal covers.
\end{theorem}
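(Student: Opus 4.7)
The plan is to follow the classical Wall $\pi$--$\pi$ argument, replacing each integral kernel-killing step by its rational counterpart as systematized by Anderson in the framework of surgery with coefficients.

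First, after making the given map $(D^{n+k+1},S^{n+k})\ra(Th(\nu),Th(f^*\nu))$ transverse to the zero section of the universal $k$-plane bundle, I obtain a compact manifold-with-boundary $(M_0,\partial M_0)$ together with a degree-$d$ normal bundle map $\phi_0\colon(M_0,\partial M_0)\ra(Y,X)$ for some nonzero integer $d$ (the degree of the given map). Standard low-dimensional surgery on $\phi_0$ makes it a $\pi_1$-isomorphism on both boundary and interior, and continuing with surgery below the middle dimension I arrange that $\phi\colon(M,\partial M)\ra(Y,X)$ is $\lceil n/2\rceil$-connected. At this stage we are merely attaching handles to kill homotopy kernels of the map, so neither the degree of $\phi$ nor Poincare duality is used.

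Next, rational Poincare--Lefschetz duality for $(Y,X)$ combined with this high connectivity forces the rational surgery kernel $K_*(M,\partial M;\Q\Gamma)$ to be concentrated in the middle dimension. In the even case $n+1=2r$ this is the single finitely generated $\Q\Gamma$-module $K_r(M,\partial M;\Q\Gamma)$. I then perform handle subtraction rationally: choose integer lifts of a $\Q\Gamma$-generating set and, after stabilizing by attaching finitely many trivial $r$-handles $S^r\times D^r$ if necessary, represent these generators by immersed $r$-disks $(D^r,S^{r-1})\hookrightarrow(M,\partial M)$. The $\pi$--$\pi$ hypothesis lets me perform Whitney moves to remove intersections and self-intersections of these disks; cutting out tubular neighborhoods of the resulting disjoint embedded disks yields $(N,\partial N)$ whose rational kernel over $(Y,X)$ vanishes. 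The odd case $n+1=2r+1$ requires the usual additional trick of cancelling pairs of $r$-handles, but the underlying rational argument is the same.

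The main obstacle is the middle-dimensional surgery step. Integrally, Wall must produce a free $\Z\Gamma$-basis of a stably free kernel, and this is precisely the point at which an $L$-group obstruction would appear if the $\pi$--$\pi$ hypothesis failed. Rationally, however, one needs only to kill a $\Q\Gamma$-generating set of the finitely generated kernel rather than produce a basis, and clearing denominators lets one work with integer representatives of the killing disks. This is the miracle behind Anderson's framework, and the concluding appeal to his Theorem~5.1 supplies the compatibility of the Whitney moves and handle subtractions with the reference map to $(Y,X)$, producing the desired rational equivalence of pairs $(N,\partial N)\ra(Y,X)$.
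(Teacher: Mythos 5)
Your proposal follows the paper's route — obtain a finite-degree normal map by transversality, then invoke Anderson's surgery with coefficients (Theorem~5.1, which is Theorem~1 of Chapter~5 of \cite{anderson}) — but it is structurally double-counted: you re-derive the interior of Anderson's argument (surgery below the middle dimension, concentration of the rational kernel by Poincar\'e--Lefschetz duality, stabilization and handle subtraction using the $\pi$--$\pi$ hypothesis) and then still cite Anderson's theorem for the same content at the end. The paper instead isolates the one point not covered verbatim by Anderson's statement: the given map has nonzero degree $d$ rather than degree one, so one replaces the fundamental class $[Y,X]$ by the rational multiple $\tfrac{1}{d}[Y,X]$, turning the finite-degree normal map into a degree-one normal map to what is still a rational Poincar\'e pair, after which Anderson's theorem applies directly. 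You gesture at this with ``clearing denominators,'' but it is worth making the rescaling explicit: without it, the surgery kernel $K_*(M,\partial M;\Q\Gamma)$ and its concentration in the middle dimension are only meaningful once the normal map has been normalized to degree one over $\Q$. The internal mechanics you sketch (that over $\Q\Gamma$ one needs only to kill a generating set rather than produce a free $\Z\Gamma$-basis, and that the $\pi$--$\pi$ hypothesis lets Whitney moves remove intersections) are the correct intuition, but they belong inside Anderson's proof rather than alongside the citation; stated as written they are not needed to complete the argument and slightly obscure which step is actually new.
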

\begin{proof}
If $(D^{n+k+1},S^{n+k})\ra(Th(\nu),Th(f^*\nu))$ is a finite degree map to a Thom space over a rational Poincare pair, then taking the transverse inverse image of $Gr_k$ (as described in subsection \ref{construction}) gives a finite degree normal map $(M,\partial)\ra(Y,X)$. Replacing the fundamental class $[Y,X]$ by a rational multiple we get a degree one normal map to $(Y,X)$ which is, with this new fundamental class, still a rational Poincare pair. Now, Theorem 1 of chapter 5 of \cite{anderson} applies and gives the desired rational equivalence $(N,\partial)\ra(Y,X)$. 
\end{proof}

This is useful because rationally any high dimensional bundle works as the normal bundle. 

\begin{lemma}
For $k>n+1$, any $k$-disk bundle $D^k\ra\nu\ra Y$ has a finite degree map $(D^{n+k+1},S^{n+k})\ra(Th(\nu),Th(f^*\nu))$. 
\end{lemma}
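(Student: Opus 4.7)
The plan is to check that the rational top-dimensional homology of the pair $(Th(\nu), Th(f^*\nu))$ is one-dimensional and then appeal to the (relative) rational Hurewicz theorem to realize some integer multiple of a generator by a map from $(D^{n+k+1}, S^{n+k})$.

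First, I would compute $H_*(Th(\nu), Th(f^*\nu); \mathbb{Q})$ via the Thom isomorphism: with appropriate (possibly twisted) rational coefficients,
\[
H_{*+k}(Th(\nu), Th(f^*\nu); \mathbb{Q}) \cong H_{*}(Y, X; \mathbb{Q}^w),
\]
where $w$ is the orientation character of $\nu$. The rational Poincare duality on $(Y, X)$ then makes the top group (in dimension $* = n+1$, hence $*+k = n+k+1$) one-dimensional over $\mathbb{Q}$; in the intended application to the rational $\pi$-$\pi$ theorem the bundle $\nu$ is chosen with orientation matching that of $(Y,X)$ so that this is indeed $\mathbb{Q}$ (if $\nu$ has the wrong orientation, the top group vanishes and the zero map is vacuously of finite degree). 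For the connectivity bookkeeping, $k \geq n+2 \geq 3$ implies that both $Th(\nu)$ and $Th(f^*\nu)$ are $(k-1)$-connected, and by the long exact sequence of the pair the same holds of $(Th(\nu), Th(f^*\nu))$. Equivalently, the cofiber $C := Th(\nu)/Th(f^*\nu)$ is simply connected with $\tilde{H}_{<k}(C; \mathbb{Q}) = 0$.

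Next, I would apply the rational Hurewicz theorem to $C$: since $C$ is rationally $(k-1)$-connected, the Hurewicz map
\[
\pi_i(C) \otimes \mathbb{Q} \longrightarrow H_i(C; \mathbb{Q})
\]
is an isomorphism for $i \leq 2k-2$ and a surjection for $i = 2k-1$ (both parts in Klaus-Kreck). Blakers-Massey excision applied to the cofibration $Th(f^*\nu) \hookrightarrow Th(\nu) \to C$ identifies $\pi_i(Th(\nu), Th(f^*\nu))$ with $\pi_i(C)$ in the same range. The hypothesis $k > n+1$, i.e.\ $k \geq n+2$, is exactly what is needed to bring the target dimension $n+k+1$ into the surjective range $i \leq 2k-1$. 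Hence some nonzero integer multiple of a generator of $H_{n+k+1}(Th(\nu), Th(f^*\nu); \mathbb{Q}) \cong \mathbb{Q}$ is realized by an element of $\pi_{n+k+1}(Th(\nu), Th(f^*\nu))$, yielding the desired finite-degree map $(D^{n+k+1}, S^{n+k}) \to (Th(\nu), Th(f^*\nu))$.

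The main obstacle (and the content of the hypothesis) is that the target dimension $n+k+1$ sits precisely at the boundary $2k-1$ of the surjective range of the rational Hurewicz theorem when $k = n+2$. The hypothesis $k > n+1$ is exactly what is needed for the top rational homology class to be representable by a sphere rather than by iterated Whitehead brackets; everything else is formal manipulation of the Thom and rational Poincare duality isomorphisms.
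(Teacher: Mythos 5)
Your proof is correct but takes a genuinely different route from the paper's. The paper applies the rational Hurewicz theorem to the space $Th(f^*\nu)$ (which is $(k-1)$-connected) to realize a multiple of the Thom class of $f^*\nu$ in $H_{n+k}$ by a map $\phi: S^{n+k}\ra Th(f^*\nu)$; it then observes that the composite $S^{n+k}\ra Th(f^*\nu)\ra Th(\nu)$ dies in $H_{n+k}(Th(\nu))$, hence (by the rational Hurewicz iso on $Th(\nu)$ in that same range) represents a torsion element of $\pi_{n+k}(Th(\nu))$, and so some integer multiple of $\phi$ bounds a disk in $Th(\nu)$. The numerology $n+k\leq 2k-2$, i.e.\ $k>n+1$, enters exactly once, to apply rational Hurewicz to the absolute space $Th(f^*\nu)$. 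You instead apply rational Hurewicz (including the surjectivity statement at the boundary dimension, which is not in the paper's cited statement but is in Klaus--Kreck) to the cofiber $C=Th(\nu)/Th(f^*\nu)$ and then pass back to the pair via Blakers--Massey excision. This is a clean and standard alternative; it is slightly heavier (it needs relative Hurewicz to get the connectivity of the pair, and then Blakers--Massey), and both Hurewicz surjectivity and the Blakers--Massey iso range cut off precisely at $n+k+1\leq 2k-1$, matching the hypothesis. The paper's version avoids Blakers--Massey by doing the extension-over-the-disk step by hand.

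One thing you flag that the paper elides: the lemma as literally stated (``any $k$-disk bundle'') is slightly imprecise, since the top relative homology $H_{n+k+1}(Th(\nu),Th(f^*\nu);\mathbb Q)\cong H_{n+1}(Y,X;\mathbb Q^{w_1(\nu)})$ is one-dimensional only when $w_1(\nu)$ matches the orientation character of the rational Poincar\'e pair $(Y,X)$; otherwise there is no fundamental class to hit, and the paper's phrase ``the fundamental class in $H_{n+k}(Th(f^*\nu);\mathbb Z)$'' already silently assumes this compatibility. For the rational surgery theorem this is harmless --- one just chooses $\nu$ (e.g.\ a suitably twisted trivial bundle) with the correct first Stiefel--Whitney class --- but your remark is a legitimate observation about the statement, not a defect of your argument.
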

\begin{proof}
The Thom spaces $Th(\nu)$ and $Th(f^*\nu)$ are $(k-1)$-connected so in the diagram 
$$
\begin{array}{ccccc}
&&\pi_{n+k}(Th(f^*\nu))&\ra&\pi_{n+k}(Th(\nu))\\
&&\downarrow&&\downarrow\\
H_{n+k+1}(Th(\nu),Th(f^*\nu))&\cong&H_{n+k}(Th(f^*\nu))&\stackrel{0}\ra&H_{n+k}(Th(\nu)).
\end{array}
$$
the vertical (Hurewicz) maps are rationally isomorphisms for $n+k\leq 2k-2$, i.e. for $k>n+1$ (by the rational Hurewicz theorem). Thus, some positive multiple of the fundamental class in $H_{n+k}(Th(f^*\nu);\mathbb Z)$ is represented by a map $\phi:S^{n+k}\ra Th(f^*\nu)$. The image of $\phi_*[S^{n+k}]$ in $Th(\nu)$ is homologous to zero, so $S^{n+k}\ra Th(f^*\nu)\ra Th(\nu)$ defines a finite order element in $\pi_{n+k}(Th(\nu))$. Thus, some positive multiple $N\phi:S^{n+k}\ra Th(f^*\nu)$ bounds a disk in $Th(\nu)$, i.e. extends to a map $(D^{n+k+1},S^{n+k})\ra(Th(\nu),Th(f^*\nu))$. This is a finite degree map (it represents a positive multiple of the generator in $H_{n+k}(Th(\nu),Th(f^*\nu))$ because its boundary represents a multiple of the generator $N\phi_*[S^{n+k}]$ in $H_{n+k}(Th(f^*\nu))$). 
\end{proof}

Putting these together we get the following result. 
\begin{theorem}[Rational surgery theorem]
\label{rationalsurgery}
If a $\pi_1$-isomorphism $f:X\ra Y$ is a rational $(n+1)$-dimensional Poincare pair, $n\geq 5$ then there is a $\pi_1$-isomorphism from a compact manifold-with-boundary $g:(M,\partial)\ra(Y,X)$ so that $\widetilde g:(\widetilde M,\widetilde\partial)\ra(\widetilde Y,\widetilde X)$ is a rational homology isomorphism. 
\end{theorem}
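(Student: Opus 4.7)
The statement is essentially a composition of the two preceding results (the Rational $\pi$-$\pi$ theorem and the existence lemma for finite degree maps into Thom spaces), so the plan is mainly to combine them cleanly and supply the normal bundle.

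First I would pick any sufficiently high-dimensional disk bundle over $Y$. The simplest choice is the trivial bundle $\nu := Y\times D^k$ for some integer $k>n+1$; since $Y$ is homotopy equivalent to a finite complex this causes no issues. Its Thom space is $Th(\nu)=\Sigma^k Y_+$ (smash with $S^k$, after adding a disjoint basepoint), and $f^*\nu$ is the trivial $k$-disk bundle over $X$.

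Second I would apply the preceding lemma with this $\nu$. Since $k>n+1$, the lemma produces a finite degree map of pairs $\phi\colon (D^{n+k+1},S^{n+k})\longrightarrow (Th(\nu),Th(f^*\nu))$. This is precisely the normal data required as input to the rational $\pi$-$\pi$ theorem: a $\pi_1$-isomorphism $f:X\to Y$ of finite complexes which forms a rational $(n+1)$-dimensional Poincare pair, together with a $k$-disk bundle $\nu$ over $Y$ admitting a finite degree map from $(D^{n+k+1},S^{n+k})$ to the relevant Thom pair.

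Third I would invoke the rational $\pi$-$\pi$ theorem verbatim on this data. Its conclusion is exactly the statement to be proved: a $\pi_1$-isomorphism $g\colon(M,\partial)\to(Y,X)$ from a compact $(n+1)$-manifold-with-boundary which lifts to a rational homology isomorphism $\widetilde g\colon(\widetilde M,\widetilde\partial)\to(\widetilde Y,\widetilde X)$ of universal covers. The hypothesis $n\geq 5$ is passed along from the input and is what allows the handle subtraction argument underlying the rational $\pi$-$\pi$ theorem to work.

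There is no serious obstacle at this stage: the hard work has already been done in proving the rational $\pi$-$\pi$ theorem (which in turn rests on Anderson's local surgery, Theorem 5.1 of \cite{anderson}) and in the Hurewicz computation of the preceding lemma. The only mild sanity check is that the trivial bundle really is allowed; this is automatic because the rational $\pi$-$\pi$ theorem imposes no requirement on $\nu$ beyond the existence of a finite degree normal map, and such a map is manufactured by the lemma once $k$ is taken large enough.
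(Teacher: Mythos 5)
Your argument is correct and is exactly how the paper obtains the result: the paper states the Rational $\pi$-$\pi$ theorem and the lemma producing a finite degree Thom-space map, then immediately concludes the Rational Surgery Theorem with no further proof (``Putting these together we get the following result''), which amounts to the same composition you spell out, including the freedom to take $\nu$ to be any high-dimensional (e.g.\ trivial) disk bundle.
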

 
\begin{remark}
See Lemma 2 in section 3 of \cite{weinberger} for a similar argument showing that a simply connected rational Poincare complex with vanishing Witt signature\footnote{This is needed to show the rational surgery obstruction vanishes in the simply connected absolute case. It is replaced by the rational $\pi$-$\pi$ theorem in the relative $\pi_1$-isomorphism case.} is rationally equivalent to a simply connected manifold. 
\end{remark}

\subsection{Rational manifold models for classifying spaces}
Combining Corollary \ref{lfcomplex}, Theorem \ref{poincarepair} and the rational surgery theorem gives the main result of this paper. 
\begin{theorem}
\label{rationalmodels}
Let $\Gamma$ be a $d$-dimensional duality group with finite classifying space $B\Gamma$. For any $r\geq 2$ with $d+r\geq 5$ there is a compact $(d+r+1)$-manifold-with-boundary $(M,\partial)$ with $\pi_1M=\pi_1\partial=\Gamma$ and rationally acyclic universal cover $\widetilde M$. 
\end{theorem}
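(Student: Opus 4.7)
The strategy is essentially to chain together the three main technical results already established. All of the real work has been done in the preceding sections, so this final theorem is a matter of bookkeeping and dimension checks.

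First, I would build an algebraic model. Since $\Gamma$ has a finite classifying space $B\Gamma$ and is a $d$-dimensional duality group, the chain complex $C^*(B\Gamma;\mathbb Z\Gamma) = C_*(E\Gamma)^*$ produces a resolution $0 \to F_{r+d} \to \cdots \to F_r \to D \to 0$ of the dualizing module $D$ by finitely generated free $\mathbb Z\Gamma$-modules (after reindexing from the cohomology of $B\Gamma$). Apply Theorem \ref{rationalmoorespace} to this resolution, with the chosen $r\geq 2$, to obtain a rational $\Gamma$-equivariant Moore space $V$ whose reduced cellular chain complex is $(F_*, Nf_*)$; in particular $\overline H_*(V;\mathbb Q)$ is concentrated in dimension $r$ and is isomorphic as a $\Gamma$-module to $D\otimes \mathbb Q$.

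Next, apply Corollary \ref{lfcomplex} to promote $V$ to a locally finite $\Gamma$-complex $\widetilde X$ whose cellular chain complex is $C_*(E\Gamma)\oplus\overline C_*(V)$ and which carries a free $\Gamma$-action. The quotient $X = \widetilde X/\Gamma$ is a finite complex with $\pi_1(X)=\Gamma$, and $\widetilde X$ is rationally equivalent to $V$, so $\overline H_*(\widetilde X;\mathbb Q)$ is concentrated in dimension $r$ and equals $D\otimes \mathbb Q$ as a $\Gamma$-module. Feeding this $X$ into the second bullet of Theorem \ref{poincarepair}, one concludes that the classifying map $X\to B\Gamma$ is a rational $(d+r+1)$-dimensional Poincar\'e pair (note that this bullet requires only $r\geq 2$, not $r>d$).

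Finally, invoke the rational surgery theorem (Theorem \ref{rationalsurgery}) with $n+1 = d+r+1$, which is legitimate because the hypothesis $d+r\geq 5$ gives $n\geq 5$. It produces a compact $(d+r+1)$-manifold-with-boundary $(M,\partial)$ and a $\pi_1$-isomorphism $(M,\partial)\to (B\Gamma,X)$ whose lift $(\widetilde M,\widetilde\partial)\to(E\Gamma,\widetilde X)$ is a rational homology isomorphism. Since $\pi_1(X)=\Gamma$, the $\pi_1$-isomorphism forces $\pi_1 M = \pi_1\partial = \Gamma$. Since $E\Gamma$ is contractible, $\widetilde M$ is rationally homology equivalent to a point, hence rationally acyclic. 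This is exactly the conclusion of the theorem, so there is nothing further to verify.

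There is no real obstacle here: the substantive inputs — the existence of rational equivariant Moore spaces (Section \ref{rationalhomotopysection}), the Poincar\'e duality for the resulting complex (Section \ref{dualitysection}), and the rational $\pi$-$\pi$ theorem (Section \ref{surgerysection}) — have all been established. The only things to check are that the dimension conditions line up ($r\geq 2$ for the Moore space and Poincar\'e pair statements, $d+r\geq 5$ so that the surgery theorem applies) and that the $\pi_1$-isomorphism transports the $\Gamma$-action correctly so that the rational acyclicity of $\widetilde M$ follows from the rational acyclicity of $E\Gamma$.
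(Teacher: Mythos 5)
Your proposal matches the paper's argument exactly: the paper states Theorem \ref{rationalmodels} as an immediate combination of Corollary \ref{lfcomplex} (to produce the finite complex $X$ with $\pi_1X = \Gamma$ and $\overline H_*(\widetilde X;\mathbb Q)$ equal to $D\otimes\mathbb Q$ concentrated in degree $r$), the second bullet of Theorem \ref{poincarepair} (to show $X\to B\Gamma$ is a rational $(d+r+1)$-dimensional Poincar\'e pair), and the rational surgery theorem (Theorem \ref{rationalsurgery}) with $n = d+r \geq 5$ to realize it by a compact manifold-with-boundary. Your observation that the second bullet of Theorem \ref{poincarepair} holds for all $r\geq 2$ (not just $r>d$, since its proof stabilizes to a larger $r'$ before invoking the uniqueness bullet) is correct and is the point that makes the dimension bound $d+r+1$ (rather than $2d+2$) achievable, so the dimension bookkeeping is sound.
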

By contrast, there are often obstructions to constructing genuine, integral manifold classifying spaces of dimension $d+r+1$. For instance, \cite{bestvinakapovichkleiner} shows that any aspherical manifold with fundamental group $F_2^d$ has dimension $\geq 2d$.
\begin{remark}
Rationality enters at three stages. First, in solving the equivariant Moore space problem (rationally), second in showing $X\ra B\Gamma$ is a rational Poincare pair and third in finding a candidate normal bundle to perform the surgery. The result of \cite{bestvinakapovichkleiner} implies there are obstructions to doing at least one of these three steps integrally. It would be interesting to understand what these obstructions are. 
\end{remark}

\section{\label{davissection}The reflection group method}
The following construction for building closed manifolds out of compact manifolds-with-boundary is known as the Davis reflection group method (see Chapter 11 of \cite{davisbook}).

Let $(M,\partial)$ be a compact $n$-manifold-with-boundary, $L$ a flag triangulation of the boundary and $W$ the corresponding right angled Coxeter group 
$$
W=\left<s_v, v \mbox{ a vertex of }L\mid s_v^2=1\mbox{ and } s_vs_w=s_ws_v \mbox{ if } v \mbox{ is adjacent to } w\right>.
$$
For each vertex $v\in L^{(0)}$, let $St_v$ be the closed star of $v$ in the barycentric subdivision of $L$. Then the sets $\{St_v\}$ form a mirror structure and one constructs a space 
$$
\mathcal U(W,M):=M\times W/\sim
$$ with $(x,u)\sim(y,v)$ if $x=y$ and $uv^{-1}\in W_x$ (where $W_x=\left<s_v\mid x\in St_v\right>$).
\begin{theorem}[Davis, 11.1 in \cite{davisbook}]
Let $\mathcal U:=\mathcal U(W,M)$. Then,
\begin{itemize}
\item
$\mathcal U$ is an $n$-manifold with $W$-action and $\mathcal U/W=M$.
\item
$W$ contains a finite index torsionfree subgroup\footnote{For instance, the commutator subgroup $[W,W]$ of $W$ is torsionfree and of index $2^{|L^{(0)}|}$.} $G$. For any such subgroup, the quotient $\mathcal U/G$ is a closed manifold.
\end{itemize}
Moreover,
\begin{itemize}
\item
if $M$ is aspherical then $\mathcal U$ is also aspherical. 
\end{itemize}
\end{theorem}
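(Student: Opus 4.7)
The plan is to verify the three assertions in sequence: first the manifold structure on $\mathcal U$, then the existence of a torsionfree finite-index subgroup with closed quotient, and finally the asphericity of $\mathcal U$ when $M$ is aspherical.

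For the manifold structure, interior points of $M$ give interior manifold points of $\mathcal U$ tautologically, since a neighborhood of $[(x,u)]$ for $x \in M \setminus \partial M$ is just a product $U \times \{u\}$. For $x \in \partial M$ lying in the interior of a simplex $\tau = [\sigma_0 < \cdots < \sigma_k]$ of the barycentric subdivision $L'$, the set $\{v : x \in St_v\}$ is exactly the vertex set of $\sigma_0$, so $W_x$ is the corresponding parabolic. A neighborhood of $[(x,u)]$ in $\mathcal U$ is then modeled on $(N \times W_x)/\!\sim$ with $N$ a small neighborhood of $x$ in $M$. Here the flag condition on $L$ together with right-angledness of $W$ is essential: it forces $W_x \cong (\mathbb Z/2)^{|\sigma_0^{(0)}|}$ to act as a standard corner-reflection group on $\mathbb R^n$, so the quotient gluing assembles to a Euclidean ball. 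Concretely, whenever several vertices of $L$ simultaneously stabilize $x$, their closed stars intersect at $x$, so by flagness they span a simplex and their reflections pairwise commute.

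For the second bullet I would take $\varphi\colon W \to (\mathbb Z/2)^{|L^{(0)}|}$ to be the abelianization sending each $s_v$ to the corresponding standard generator; its kernel $G := [W,W]$ has index $2^{|L^{(0)}|}$. Every finite subgroup of a Coxeter group is conjugate into a spherical parabolic $W_\sigma$, and in the right-angled case $W_\sigma$ is elementary abelian of rank $|\sigma^{(0)}|$, so $\varphi$ sends $W_\sigma$ injectively into the target. Hence $G$ is torsionfree. Since stabilizers of the $W$-action on $\mathcal U$ are conjugates of the finite groups $W_x$, any torsionfree finite-index $G \leq W$ acts freely, yielding a manifold quotient; this quotient is compact because the projection $\mathcal U/G \to \mathcal U/W = M$ has finite fibers, and boundaryless because each point of $\partial M$ in one copy of $M$ becomes an interior point of $\mathcal U$ after the reflected copy is glued in across the corresponding mirror.

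For asphericity I expect the main obstacle. The plan is to invoke the fact that when the nerve $L$ is flag and $W$ is right-angled, the Davis complex $\Sigma$ of $W$ is a CAT(0) cube complex (by Gromov's link condition), hence contractible. The space $\mathcal U$ has a natural $W$-equivariant cover by translates $\{u\cdot M\}_{u \in W}$ of the fundamental chamber, and intersections $u_1 M \cap \cdots \cap u_j M$ are either empty or equal to mirror intersections $M_T \subset \partial M$ indexed by simplices of $L$; these intersections are unions of closed stars in $L'$ and, thanks again to flagness, are contractible. A nerve-lemma or homotopy-colimit argument applied to this cover then expresses $\mathcal U$ as a diagram of aspherical spaces (copies of $M$, glued along contractible mirror pieces) parametrised by the contractible complex $\Sigma$, which forces $\mathcal U$ to be aspherical. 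The hardest step is verifying that the mirror intersections are contractible and that the homotopy-colimit argument correctly identifies $\pi_k(\mathcal U) = 0$ for $k\geq 2$; both reduce ultimately to combinatorial consequences of the flag hypothesis on $L$ and the CAT(0) geometry of $\Sigma$.
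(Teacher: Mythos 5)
This theorem is not proved in the paper---it is cited verbatim from Davis's book (Theorem 11.1 of \cite{davisbook}), and the paper only sketches, in one sentence after the statement, how the analogous $\mathbb{Q}$-acyclicity claim goes. So there is no proof in the paper to compare against line by line, but the paper's one-sentence remark does reveal a gap in your asphericity argument.

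Your treatment of the first two bullets is essentially the standard one and is fine. One small misattribution: the fact that $W_x = W_{\sigma_0}$ is elementary abelian and acts as a corner reflection group does not come from flagness of $L$; it comes from $\sigma_0$ being a simplex of $L$, so its vertices are pairwise adjacent in $L$ and hence the corresponding generators already commute. Flagness is the \emph{converse} implication (pairwise adjacent vertices span a simplex), and it is needed not here but to guarantee that a spherical subset $T\subset S$ always has $M_T\neq\emptyset$, which is what the asphericity argument uses.

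Your asphericity step is where the real gap is. You propose to cover $\mathcal U$ by the chamber translates $\{u\cdot M\}_{u\in W}$ and run a nerve-lemma or homotopy-colimit argument. But those pieces are only aspherical, not contractible, so a nerve argument at this level does not by itself let you conclude $\pi_k(\mathcal U)=0$ for $k\geq 2$: the nerve lemma would only identify $\mathcal U$ with a homotopy colimit of aspherical pieces, which need not be aspherical without further input. The standard fix, and the route the paper explicitly points to in the sentence just below the theorem statement, is to pass to the universal cover first: build $\widetilde{\mathcal U}$ as $\mathcal U(\widetilde W,\widetilde M)$, where $\widetilde M$ is the universal cover of $M$, $\widetilde L$ is the induced (infinite) flag triangulation of $\widetilde\partial$, and $\widetilde W$ is the corresponding infinitely generated right-angled Coxeter group. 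Now the chamber pieces $\widetilde u\cdot\widetilde M$ \emph{are} contractible, the mirror intersections are closed stars in the barycentric subdivision of $\widetilde L$ and hence contractible cones, flagness of $\widetilde L$ guarantees $\widetilde M_T\neq\emptyset$ exactly for spherical $T$, and the nerve/basic-construction machinery (Davis, Chapter 8--9) yields contractibility of $\widetilde{\mathcal U}$. The CAT(0) cube complex structure on the Davis complex $\Sigma$ that you mention is a genuinely parallel fact, but it is not literally what is needed here; what is needed is the acyclicity formula the paper quotes, namely Theorem 8.1.6 of \cite{davisbook}, applied to $\widetilde M$, combined with simple connectivity of $\widetilde{\mathcal U}$.
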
 
\begin{remark}
If $M$ is aspherical, then its double along the boundary $M\cup_\partial M$ is closed but often not aspherical. One can think of the construction $\mathcal U$ as a ``partial doubling'' along pieces of the boundary which does produce an asperical manifold $\mathcal U$ and, after quotienting out by an appropriate group of covering translations $G$, a closed aspherical manifold $\mathcal U/G$. 
\end{remark}
The last bullet point follows by building the universal cover $\widetilde{\mathcal U}$ from copies of $\widetilde M$ via the (infinitely generated) right angled Coxeter group $\widetilde W$ associated to the flag triangulation of the $\pi_1M$-cover $\widetilde L$ of $L$ and using this description to show that if $\widetilde M$ is acyclic the $\widetilde{\mathcal U}$ is also acyclic. More precisely, Theorem 8.1.6 in chapter 8 of \cite{davisbook} says that  
$$
H_*(\widetilde{\mathcal U})\cong\bigoplus_{\sigma\in\widetilde L}H_*(\widetilde M,\widetilde M^{\sigma})\otimes\mathbb Z\widetilde W^{\sigma},
$$
where the sum is over the simplices of $\widetilde L$\footnote{including the empty simplex} and $\widetilde M^{\sigma}=\cup_{v\in\sigma}St_v$ is a union of closed stars $St_v$ in the barycentric subdivision of $\widetilde L$.\footnote{and $\widetilde W^{\sigma}$ is a subgroup of $\widetilde W$ associated to $\sigma$ that we will not need to know about.}
Since $\widetilde M$ is acyclic and $\widetilde L$ is a flag complex, one concludes that $\overline H_*(\widetilde{\mathcal U})=0$. The same argument works rationally if one only assumes that $\widetilde M$ is $\mathbb Q$-acyclic.
\begin{itemize}
\item
\label{qacyclic}
If $\widetilde M$ is $\mathbb Q$-acyclic then $\widetilde{\mathcal U}$ is $\mathbb Q$-acyclic. 
\end{itemize}
\section{\label{l2betti} $L^2$-Betti numbers and vanishing conjectures}
\subsection{$L^2$-Betti numbers}
Let $X$ be a finite complex with universal cover $\widetilde X$ and fundamental group $\Gamma$. Let $(C^{(2)}_*(\widetilde X;\mathbb R),\partial^{(2)})$ be the complex of square-summable chains on the universal cover. The homology $H^{(2)}_*(\widetilde X)$ of this complex is often an infinite dimensional vector space, but it is also a $\Gamma$-module, and has finite (von Neumann) $\Gamma$-dimension. The $k$-th $L^2$-Betti number of $X$ is the $\Gamma$-dimension of the $k$-th $L^2$-homology module
$$
b_k^{(2)}(X):=\dim_{\Gamma}H^{(2)}_k(\widetilde X).
$$
The $L^2$-Betti numbers have many nice properties (see \cite{lueckbook}). For instance,
\begin{itemize}
\item(Poincare duality)
If $N$ is a closed $n$-manifold then $b^{(2)}_k(N)=b^{(2)}_{n-k}(N)$,
\item(Euler characteristic formula)
$\chi(X)=\sum(-1)^kb_k^{(2)}(X)$, 
\item($\mathbb Q$-invariance)
If $X\ra Y$ is a $\pi_1$-isomorphism and the map on universal covers $\widetilde X\ra\widetilde Y$ is a $\mathbb Q$-homology isomorphism, then $b^{(2)}_k(X)=b^{(2)}_k(Y)$,
\item(Multiplicativity in covers) If $X\ra Y$ is a degree $d$ cover then $b^{(2)}_k(X)=d\cdot b^{(2)}_k(Y)$.
\item(Disjoint unions/induction principle\footnote{We call this the induction principle because it is a consequence on the level of $\dim_{\Gamma}$ of the induction principle for $L^2$-homology stated in Lemma 3.1 of \cite{okunschreve}.}) Suppose $\Gamma$ acts cocompactly by covering translations on a simply connected\footnote{but possibly not connected} space $\hat X$ and the quotient decomposes as a disjoint union of $r$ connected components $\hat X/\Gamma=X_1\coprod\dots\coprod X_r$. Then \begin{equation}
\label{induction}
\dim_{\Gamma}H^{(2)}_k(\hat X)=b_k^{(2)}(X_1)+\dots+b_k^{(2)}(X_r).
\end{equation}
\end{itemize}

\subsection{\label{vanishingconjectures}$L^2$-vanishing conjectures}
A classical conjecture \cite{dodziuk} (see chapter 11 of \cite{lueckbook}), says that for a closed aspherical $n$-manifold the only non-vanishing $L^2$-Betti numbers appear in the middle dimension.\footnote{So, if the manifold is odd dimensional, then all of its $L^2$-Betti numbers should vanish.}
\begin{conjecture}[Singer]
\label{singer}
If $N$ is a closed aspherical $n$-manifold then 
$$
b^{(2)}_k(N)=\left\{\begin{array}{ccc} (-1)^k\chi(N)& if &k=n/2,\\
0&& \mbox{ else}.\end{array}\right.
$$
\end{conjecture}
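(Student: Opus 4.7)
The plan is to attack Singer's conjecture by combining Hodge theory on the universal cover with the Atiyah $L^2$-index theorem. First I would reduce the statement, via Poincaré duality on $\widetilde N$ (which gives $b_k^{(2)}(N) = b_{n-k}^{(2)}(N)$) and the $L^2$ Euler characteristic formula, to the assertion that $b_k^{(2)}(N) = 0$ for all $k < n/2$. This translates into a statement about $\Gamma$-dimensions of spaces of harmonic $L^2$-forms on $\widetilde N$, where $\Gamma = \pi_1 N$, which is the form in which one can hope to bring analytic tools to bear.

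Next I would try to import the known techniques case by case: in Kähler hyperbolicity one produces bounded primitives for closed $L^2$-forms and kills reduced cohomology that way; in sufficiently pinched negative curvature one uses a Weitzenböck/Bochner-style computation; when $\Gamma$ contains an infinite normal amenable subgroup one uses the Cheeger--Gromov vanishing; and in the four-dimensional right-angled Coxeter setting of Davis--Okun one combines the $L^2$ induction principle with a combinatorial analysis of the nerve. Outside these frameworks I would attempt a mixed strategy exploiting a coarse compactification of $\widetilde N$ and studying reduced $L^2$-cohomology in terms of the boundary at infinity, in the spirit of item (\ref{npc}) of the introduction.

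The hard part, and in fact the main obstacle, is the general aspherical case where none of these hypotheses is available, and the remainder of this paper suggests that the obstacle is genuine rather than technical. Theorem \ref{closedtheorem} produces closed manifolds with rationally acyclic universal cover but nonvanishing $L^2$-Betti numbers outside the middle dimension, and every analytic mechanism listed above is insensitive to the distinction between asphericity and rational asphericity. Consequently any general proof of Conjecture \ref{singer} must somehow exploit torsion in the higher homotopy groups of $\widetilde N$, and I see no current mechanism for doing so. A more realistic intermediate goal would be to identify precisely which integral torsion phenomena in $\pi_{>1}(\widetilde N)$ are forced by genuine asphericity and ruled out by the rational constructions of sections \ref{rationalhomotopysection} through \ref{davissection}; a handle on that separation would be a prerequisite for extending any of the analytic vanishing arguments beyond their present ranges.
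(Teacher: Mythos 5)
This statement is labeled a \emph{conjecture} in the paper and remains open; the paper offers no proof of it, so there is nothing to compare your attempt against. You have correctly recognized this: your ``proposal'' is not a proof but a survey of the known partial results (K\"ahler hyperbolic, pinched negative curvature, amenable normal subgroups, the Davis--Okun four-dimensional Coxeter case) together with an explanation, drawn from the paper itself, of why a fully general analytic proof is unlikely. That explanation is accurate and matches the paper's own narrative: Theorem~\ref{closedtheorem} produces closed manifolds with $\mathbb{Q}$-acyclic universal cover and $L^2$-Betti numbers off the middle dimension, and since each of the analytic vanishing mechanisms you list is insensitive to the difference between asphericity and rational asphericity, none of them can resolve the general conjecture. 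Your concluding observation --- that any proof must somehow exploit torsion in $\pi_{>1}(\widetilde N)$ --- is exactly the moral the paper draws. In short, there is no gap to report because there is no proof to be had; you have correctly identified the status of the statement and summarized the paper's evidence bearing on it.
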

More recently, it was conjectured that the $L^2$-Betti numbers of any aspherical manifold should vanish above the middle dimension \cite{davisokun}.
\begin{conjecture}[Davis-Okun]
\label{davisokun}
If $M$ is an aspherical $n$-manifold then $b^{(2)}_{>n/2}(M)=0$.
\end{conjecture}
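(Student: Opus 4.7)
The Davis-Okun conjecture is a well-known open problem, so what follows is an attempted line of attack rather than a solution; the paper itself provides strong hints about what any proof must navigate. The plan breaks into three parts: reduce to a cleaner statement, attempt a dimension induction, and confront the obstacle that $L^2$-Betti numbers are torsion-blind while the hypothesis of asphericity forbids torsion in higher homotopy.

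For the first part, observe that for a closed aspherical $n$-manifold Poincare duality gives $b^{(2)}_k(M)=b^{(2)}_{n-k}(M)$, so the conjecture is equivalent to $b^{(2)}_k(M)=0$ for all $k<n/2$, i.e.\ to the weak form of Singer's Conjecture \ref{singer}. For non-compact aspherical $M$ I would attempt a compact exhaustion, using the induction/disjoint union principle of Section \ref{l2betti} to assemble $\dim_{\Gamma}H^{(2)}_*(\widetilde M)$ from $\pi_1$-injective compact aspherical submanifolds-with-boundary, thereby reducing to the closed case.

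For the second part, I would proceed by induction on $n$. Given closed aspherical $M$, look for a two-sided, $\pi_1$-injective, aspherical codimension-one submanifold $\Sigma\subset M$ whose complementary pieces have aspherical closures with $\pi_1$-injective boundary. A Mayer-Vietoris sequence for $L^2$-chains, together with the inductive hypothesis for $\Sigma$ (dimension $n-1$) and for the compact aspherical pieces, should control $b^{(2)}_k(M)$ in the range $k>n/2$. Such decompositions exist in several important families (JSJ decompositions of 3-manifolds, hyperplane decompositions of special cube complexes, Davis-style reflection decompositions as in Section \ref{davissection}) but a general structural theorem producing them is not known and would itself be a substantial result.

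The main obstacle, made vivid by Theorem \ref{closedtheorem}, is that asphericity is strictly stronger than rational asphericity and this gap is exactly what the conjecture exploits. $L^2$-Betti numbers are intrinsically rational and analytic; by the $\mathbb Q$-invariance property listed in Section \ref{l2betti}, they cannot distinguish an aspherical manifold from a rationally aspherical one whose higher homotopy carries torsion. Hence any proof must either introduce integrally sensitive invariants, or use asphericity through a non-analytic route: genuine integral cell structures on $\widetilde M$, obstructions with torsion coefficients, or characteristic classes arising from actual (rather than rational) bundle reductions. Bridging this mismatch between a torsion-blind invariant and a torsion-sensitive hypothesis is, in my view, the structural reason the conjecture has resisted proof, and it is the step I expect to be the main obstacle in any attack along the lines above.
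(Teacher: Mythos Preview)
This statement is a \emph{conjecture}, not a theorem: the paper states it as an open problem and does not attempt to prove it. In fact, the paper's main contribution (Theorems \ref{maintheorem} and \ref{closedtheorem}) is to show that the natural rational analogue of this conjecture \emph{fails}, which is evidence that any proof of Conjecture \ref{davisokun} must genuinely exploit integral asphericity. You recognize this in your third paragraph, and that discussion is apt and well-aligned with the paper's message.

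That said, treating your first two paragraphs as a proof outline, there are concrete gaps. In part one, the reduction from open aspherical manifolds to closed ones via compact exhaustion is not justified: $L^2$-Betti numbers do not in general commute with direct limits, and the induction principle (\ref{induction}) you cite concerns disjoint unions with a cocompact action, not exhaustions. Indeed, the Davis--Okun conjecture for open manifolds is strictly stronger than the Singer conjecture for closed ones (this is the content of Conjecture \ref{os} and Theorem \ref{okunschrevetheorem}), so the reduction goes the wrong way. In part two, you yourself note that the needed $\pi_1$-injective aspherical hypersurface need not exist; without it the induction has no engine. So what you have written is not a proof sketch but a (reasonable) survey of known partial strategies together with a correct diagnosis of why they stall---which is consistent with the conjecture remaining open.
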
 
Even more recently, the paper \cite{okunschreve} focused attention on a restricted version of this and related it to the Singer conjecture.
\begin{conjecture}[Okun-Schreve]
\label{os}
If $(M,\partial)$ is a compact aspherical $n$-manifold-with-boundary then $b^{(2)}_{>n/2}(M)=0$.
\end{conjecture}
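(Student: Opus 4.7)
The plan is to try to deduce the Okun--Schreve conjecture from the Singer conjecture (Conjecture~\ref{singer}) by means of the reflection group method of Section~\ref{davissection}. Given a compact aspherical $n$-manifold-with-boundary $(M,\partial)$, the first step is to pick a flag triangulation $L$ of $\partial$ (barycentrically subdividing if necessary) and form the Davis manifold $\mathcal U = \mathcal U(W,M)$ together with a torsion-free finite-index subgroup $G \leq W$. The Davis theorem then supplies a closed aspherical $n$-manifold $N := \mathcal U/G$, so granting the Singer conjecture on $N$ yields $b^{(2)}_k(N) = 0$ for all $k \neq n/2$.

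The second step is to express the $L^2$-Betti numbers of $N$ in terms of those of $M$. The integral Davis decomposition $H_*(\widetilde{\mathcal U}) \cong \bigoplus_\sigma H_*(\widetilde M, \widetilde M^\sigma) \otimes \mathbb Z\widetilde W^\sigma$ quoted in Section~\ref{davissection} admits an $L^2$-analogue in which each summand contributes a non-negative multiple of $b^{(2)}_k(M, M^\sigma)$ to $\dim_G H^{(2)}_k(\widetilde{\mathcal U})$. Summing over $\sigma$ and invoking the induction principle (\ref{induction}) then writes $b^{(2)}_k(N)$ as a non-negative linear combination of the numbers $b^{(2)}_k(M, M^\sigma)$. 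The hypothetical vanishing $b^{(2)}_{>n/2}(N) = 0$ coming from Singer forces each summand above the middle dimension to vanish, and a final Poincar\'e--Lefschetz comparison $b^{(2)}_k(M) = b^{(2)}_{n-k}(M,\partial)$ applied to the $\sigma = \emptyset$ term (combined with a short induction on simplices of $L$ for the remaining terms) should yield $b^{(2)}_{>n/2}(M) = 0$.

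The principal obstacle is, of course, that the Singer conjecture is itself open, so any argument of this shape can at best be conditional. On the unconditional side, the genuine technical work is in two places: establishing the $L^2$-analogue of the Davis decomposition with no sign cancellation between summands (this is where the flag condition on $L$ is crucial, via the Davis--Okun formalism), and pinning down the precise weighting of $b^{(2)}_k(M, M^\sigma)$ inside $b^{(2)}_k(N)$ well enough to extract termwise vanishing. It is also worth observing that Theorem~\ref{closedtheorem} of the present paper constructs closed rationally aspherical $N$ for which $b^{(2)}_{>n/2}(N) \neq 0$, so the rational analogue of this whole strategy fails at the very first step: any successful proof of Okun--Schreve along these lines must genuinely use integral asphericity of $N$ and cannot get away with mere rational asphericity.
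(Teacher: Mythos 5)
The statement you are addressing is a conjecture, not a theorem; the paper does not prove it, and neither can you. What the paper does (quoting Okun--Schreve's Theorem 4.5 as Theorem~\ref{okunschrevetheorem} and sketching the argument) is establish the \emph{equivalence} of Conjecture~\ref{os} with the Singer conjecture. Your proposal is an attempt to reconstruct the implication Singer $\Rightarrow$ Okun--Schreve, so it can only be conditional, as you note yourself. The question is whether the conditional argument is right, and there it has a genuine gap that separates it from the Okun--Schreve proof the paper sketches.

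The paper's sketch is an \emph{induction on dimension}: assume Singer in dimension $\leq n$ and Conjecture~\ref{os} in dimension $< n$, build $\mathcal U/G$, and then remove $\pi_1(\mathcal U/G)$-orbits of walls one at a time. Each orbit of walls projects to a finite union of aspherical $(n-1)$-manifolds-with-boundary, and the inductive hypothesis (Conjecture~\ref{os} in dimension $n-1$) is invoked to show, via a Mayer--Vietoris / long exact sequence in $L^2$-homology, that removing walls does not increase $L^2$-homology above the middle dimension. Your proposal replaces all of this with a hoped-for ``$L^2$-analogue'' of the Davis direct sum decomposition $H_*(\widetilde{\mathcal U}) \cong \bigoplus_\sigma H_*(\widetilde M, \widetilde M^\sigma)\otimes \mathbb Z\widetilde W^\sigma$. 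That decomposition is a statement about ordinary (algebraic) homology of an infinite direct sum of free modules; it does not pass to $L^2$-homology, which is a Hilbert-module completion, and the summands $\mathbb Z\widetilde W^\sigma$ are infinitely generated. The wall-removal argument (with its notion of tidy arrangement and careful bookkeeping of long exact sequences) is precisely the device Okun and Schreve introduced to replace this formula in the $L^2$-setting, and it is where the flag condition and the tidy-arrangement hypotheses actually do their work. Your sketch omits both the wall-removal mechanism and, more importantly, the induction on $n$: the phrase ``a short induction on simplices of $L$'' is not a substitute for induction on dimension, because the relative groups $b^{(2)}_k(M, M^\sigma)$ that you would need to control are exactly the $(n-1)$-dimensional quantities whose vanishing is the content of the inductive hypothesis.

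Your closing observation is correct and worth keeping: the construction of Theorem~\ref{closedtheorem} shows that the rational analogue of the entire strategy fails, so any proof of Singer $\Rightarrow$ Okun--Schreve must use integral asphericity in an essential way. But to make the conditional argument itself go through you need the actual Okun--Schreve machinery --- dimension induction plus wall removal --- not the naive $L^2$-lift of the Davis splitting.
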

This is a generalization the Singer conjecture because if $M$ is closed ($\partial=\emptyset$) then Poincare duality implies that the $L^2$-Betti numbers also vanish below the middle dimension. Using the reflection group method, Okun and Schreve show in \cite{okunschreve} that the Singer conjecture in dimension $\leq n$ implies their conjecture in dimension $\leq n$.

\begin{theorem}[4.5 in \cite{okunschreve}]
\label{okunschrevetheorem}
Conjectures \ref{singer} and \ref{os} are equivalent.
\end{theorem}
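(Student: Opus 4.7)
The plan is to prove the two implications separately.

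\textbf{Direction 1 (Conjecture \ref{os} $\Rightarrow$ Conjecture \ref{singer}).} This direction is essentially immediate. A closed aspherical $n$-manifold $N$ is a compact aspherical $n$-manifold-with-boundary with empty boundary, so Conjecture \ref{os} directly gives $b^{(2)}_{k}(N) = 0$ for $k > n/2$. Combined with the Poincar\'e duality property $b^{(2)}_k(N) = b^{(2)}_{n-k}(N)$ listed in Section \ref{l2betti}, this forces vanishing for $k < n/2$ as well, and the Euler characteristic formula $\chi(N) = \sum (-1)^k b^{(2)}_k(N)$ pins down the value in the middle dimension.

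\textbf{Direction 2 (Conjecture \ref{singer} $\Rightarrow$ Conjecture \ref{os}).} This is the nontrivial direction. Given a compact aspherical $(M^n, \partial)$, I would apply the Davis reflection group method of Section \ref{davissection} with a flag triangulation $L$ of $\partial$ to produce a closed aspherical $n$-manifold $\mathcal{U}/G$. Singer's conjecture, assumed in dimensions $\leq n$, applies to this closed aspherical manifold and gives $b^{(2)}_{k}(\mathcal{U}/G) = 0$ for all $k > n/2$. The task is then to transfer this vanishing to $M$.

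For that transfer I would read Davis's homological decomposition from Theorem 8.1.6 of \cite{davisbook} on the $L^2$-chain level and take $\Gamma$-dimensions with $\Gamma$ a torsionfree finite index subgroup of $W \ltimes \pi_1 M$ as in the statement of the theorem. This should express $b^{(2)}_k(\mathcal{U}/G)$ as a nonnegative-coefficient sum indexed by the simplices $\sigma$ of $L$ (including the empty simplex), where the summand indexed by the empty simplex is a positive multiple of $b^{(2)}_k(M)$ and each other summand is proportional to a relative $L^2$-Betti number $b^{(2)}_k(M, M^\sigma) \geq 0$. Since the total vanishes for $k > n/2$ and every summand is nonnegative, each summand must vanish individually, and in particular $b^{(2)}_{>n/2}(M) = 0$ as required.

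The main obstacle is justifying the nonnegative additivity of the Davis decomposition at the $L^2$-level so that the sign argument actually closes up: one needs the decomposition of $H_*(\widetilde{\mathcal U})$ from Theorem 8.1.6 of \cite{davisbook} to descend to an identity of von Neumann dimensions over $\Gamma$ with $b^{(2)}_k(M)$ appearing with strictly positive coefficient. This is essentially the content of the weighted $L^2$-Betti number formalism that has been developed for right-angled Coxeter group actions, and is closely related in spirit to the disjoint unions/induction principle bullet of Section \ref{l2betti}. Once this is in hand, the proof is a formal application of Singer's conjecture to the closed aspherical manifold $\mathcal{U}/G$.
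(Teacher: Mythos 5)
Your Direction 1 is correct and matches the paper: a closed manifold is a compact manifold with empty boundary, $L^2$-Poincar\'e duality gives the vanishing below the middle dimension, and the Euler characteristic formula fixes the middle term.

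Your Direction 2 is not the route the paper (following Okun--Schreve) takes, and the obstacle you flag at the end is not a technicality to be outsourced to ``weighted $L^2$ formalism''---it is the whole problem, and the direct-decomposition route does not close up. Theorem 8.1.6 of \cite{davisbook} is a decomposition of the \emph{ordinary} homology of $\widetilde{\mathcal U}$ obtained from a filtration/spectral-sequence argument; it is not a chain-level splitting that one can simply $\ell^2$-complete, the direct sum is over the infinitely many simplices of $\widetilde L$, and the tensor factor $\mathbb Z\widetilde W^\sigma$ does not convert into a clean multiplicative coefficient $|W/G|$ once one passes to von Neumann dimensions over $\pi_1\mathcal U\rtimes G$. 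Where $L^2$-decompositions of $\mathcal U$ do exist (Davis--Okun, Dymara, Davis--Dymara--Januszkiewicz--Okun) the coefficients come from the Coxeter growth series and one does not get the nonnegative additive formula with a positive $b^{(2)}_k(M)$ summand that your sign argument needs; if such a formula were available, Davis--Okun would have proved the equivalence in 2001. What Okun--Schreve actually do, and what the paper sketches, is an induction on dimension: assume Singer in dimensions $\leq n$ \emph{and} Conjecture \ref{os} in dimensions $<n$, build $\mathcal U/G$, and then remove $\pi_1(\mathcal U/G)$-orbits of walls from $\widetilde{\mathcal U}$ one at a time. Each wall orbit projects to a finite union of aspherical $(n-1)$-manifolds-with-boundary, and the inductive hypothesis (Conjecture \ref{os} in dimension $n-1$ for these walls), fed into a Mayer--Vietoris-type sequence in $L^2$-homology, is exactly what shows that removing a wall does not increase $L^2$-homology above the middle dimension. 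After all walls are gone one has $\coprod_{|W/G|}M$, and then the induction principle finishes the argument. Your proposal has no inductive hypothesis on the walls at all, and that is where the proof actually lives.
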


Thus, to find counterexamples to the Singer conjecture one can try to build low-dimensional {\it manifold} models for finite classifying spaces $B\Gamma$. In Theorem \ref{rationalmodels} we constructed rational analogues of such low dimensional manifold models. In the rest of this paper, we will explain how to mimic the argument of \cite{okunschreve} in order to get a counterexample to a rational analogue of Singer's conjecture (Corollary \ref{noqsinger}). First, we recall the argument. 

\subsection{Sketch of proof of Theorem \ref{okunschrevetheorem}}
Suppose we know Conjecture \ref{singer} in dimension $\leq n$ and the Conjecture \ref{os} in dimension $<n$. 
The method of Okun and Schreve is to start with an aspherical $n$-manifold with boundary $(M,\partial)$, perform the reflection group construction $\mathcal U/G:=\mathcal U(W,M)/G$ and then keep removing $\pi_1(\mathcal U/G)$-orbits of walls from $\widetilde{\mathcal U}$ and check that the high dimensional $L^2$-homology groups do not increase in the process. At each stage, an orbit of walls is a disjoint union of contractible $(n-1)$-manifolds (possibly with boundary) on which $\pi_1(\mathcal U/G)$ acts by covering translations. The quotient of this orbit is a finite union of aspherical $(n-1)$-manifolds (with boundary). Conjecture \ref{os} for these $(n-1)$-manifolds implies\footnote{Via an appropriate long exact sequence in $L^2$-homology.} that removing them does not increase high dimensional $L^2$-homology. Once all the walls have been removed we are left with $(\mathcal U\setminus Walls)/G=\coprod_{|W/G|} M$ having no more high dimensional $L^2$-homology\footnote{See Theorem \ref{davismanifoldbetti} and (\ref{remove}) below for the precise statement.} than $\mathcal U/G$. From this, one deduces that Conjecture \ref{singer} for the closed aspherical $n$-manifold $\mathcal U/G$ implies Conjecture \ref{os} for the aspherical $n$-manifold-with-boundary $(M,\partial)$.

\subsection{The $L^2$-Betti numbers of $\mathcal U/G$}
The argument in \cite{okunschreve}, sketched above, actually shows a bit more. Given the Singer conjecture, it lets one, to a large extent, compute the $L^2$-Betti numbers of the closed manifold $\mathcal U(W,M)/G$ from those of $M$, even if $M$ is not aspherical.
\begin{theorem}
\label{davismanifoldbetti}
If the Singer conjecture is true for manifolds of dimension $<n$, then 
$$
b^{(2)}_k(\mathcal U/G)=|W/G|b^{(2)}_k(M)
$$
for $k>{\lfloor(n+1)/2\rfloor}$
and\footnote{Here $\lfloor\cdot\rfloor$ is the floor function, so $\lfloor(n+1)/2\rfloor$ is $n/2$ if $n$ is even and $(n+1)/2$ if $n$ is odd.}
$$
b^{(2)}_{\lfloor(n+1)/2\rfloor}(\mathcal U/G)\geq |W/G|b^{(2)}_{\lfloor(n+1)/2\rfloor}(M).
$$
\end{theorem}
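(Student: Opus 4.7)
The plan is to follow the wall-removal argument outlined in the preceding sketch of Theorem~\ref{okunschrevetheorem}, but tracking $L^2$-Betti numbers quantitatively rather than merely checking vanishing. Write $\Gamma=\pi_1(\mathcal U/G)$. The closed manifold $\mathcal U/G$ decomposes into $|W/G|$ chambers, each a copy of $M$, glued along a finite collection of walls $W_1,\dots,W_N$; each $W_i$ is a finite disjoint union of aspherical $(n-1)$-dimensional manifolds-with-boundary. Enumerate the walls and set $Z_0=\mathcal U/G$ and $Z_i=Z_{i-1}\setminus W_i$, so that $Z_N=\coprod_{|W/G|}\mathrm{int}(M)$. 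Write $Y_i$ for the preimage of $Z_i$ in $\widetilde{\mathcal U/G}$.

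There are three ingredients. First, the disjoint-union/induction principle (\ref{induction}) applied to $Z_N$ gives $\dim_{\Gamma}H_k^{(2)}(Y_N)=|W/G|\,b_k^{(2)}(M)$. Second, for each pair $(Y_{i-1},Y_i)$ the wall is two-sided with trivial normal line bundle, so excision together with the Thom isomorphism for the trivial $\mathbb R$-bundle gives $H_k^{(2)}(Y_{i-1},Y_i)\cong H_{k-1}^{(2)}(\widetilde W_i)$, and the induction principle identifies $\dim_{\Gamma}H_{k-1}^{(2)}(\widetilde W_i)$ with the sum of $b_{k-1}^{(2)}$ over the components of $W_i$. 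Third, the hypothesis: Singer in dimensions $\leq n-1$ is, by Theorem~\ref{okunschrevetheorem}, equivalent to the Okun-Schreve conjecture in dimensions $\leq n-1$, so the components of $W_i$ have no $L^2$-Betti numbers above dimension $(n-1)/2$. In particular $\dim_{\Gamma}H_k^{(2)}(Y_{i-1},Y_i)=0$ whenever $k-1>(n-1)/2$, i.e.\ whenever $k>\lfloor(n+1)/2\rfloor$.

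Feeding these into the long exact sequence of the pair $(Y_{i-1},Y_i)$: for $k>\lfloor(n+1)/2\rfloor$ both relative groups $H_k^{(2)}(Y_{i-1},Y_i)$ and $H_{k+1}^{(2)}(Y_{i-1},Y_i)$ vanish, so $H_k^{(2)}(Y_i)\to H_k^{(2)}(Y_{i-1})$ is an isomorphism and the $\Gamma$-dimensions agree; for $k=\lfloor(n+1)/2\rfloor$ only $H_{k+1}^{(2)}(Y_{i-1},Y_i)$ vanishes (because $k+1$ lies in the strict range), giving an injection $H_k^{(2)}(Y_i)\hookrightarrow H_k^{(2)}(Y_{i-1})$ and hence $\dim_{\Gamma}H_k^{(2)}(Y_i)\leq\dim_{\Gamma}H_k^{(2)}(Y_{i-1})$. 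Iterating over $i=1,\dots,N$ converts the terminal identity $\dim_{\Gamma}H_k^{(2)}(Y_N)=|W/G|b_k^{(2)}(M)$ into the stated equality for $k>\lfloor(n+1)/2\rfloor$ and the stated lower bound for $k=\lfloor(n+1)/2\rfloor$ on $b_k^{(2)}(\mathcal U/G)$.

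The main technical nuisance is the Thom-isomorphism/excision/long-exact-sequence machinery for $L^2$-chains on the $\Gamma$-cocompact but non-compact spaces $Y_i$: the walls $\widetilde W_i$ themselves carry boundary lying on the not-yet-removed walls, so one must work with suitable locally finite $L^2$-chain complexes, and either invoke the Mayer-Vietoris/induction formalism already developed in Chapter~7 of \cite{davisbook} and in \cite{okunschreve}, or reproduce it in this setting. Everything else is bookkeeping built on top of the proof of Theorem~\ref{okunschrevetheorem}.
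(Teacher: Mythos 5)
Your proposal is correct and follows the same wall-removal argument that the paper invokes from Okun--Schreve; you have simply unpacked what the paper cites in a single sentence. The one point the paper makes explicit that you leave implicit is that the Okun--Schreve definition of a tidy arrangement requires the ambient space $\widetilde{\mathcal U}$ to be contractible, which fails here (it is only rationally acyclic); the paper observes that only simple connectedness is used, and your explicit excision/Thom/long-exact-sequence bookkeeping confirms this, since nowhere do you need contractibility of $Y_i$ itself, only asphericity of the walls (which holds because the walls in $\widetilde{\mathcal U}$ and their intersections are contractible pieces of $\widetilde\partial$ and their quotients by stabilizers in $\pi_1(\mathcal U/G)$ are aspherical).
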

Before explaining the proof, we recall the definition of a tidy arrangement.

\begin{definition}[page 3 of \cite{okunschreve}]
Let $N$ be a $\Gamma$-manifold (possibly with boundary) and $\mathcal E=\{E_i\}_{i=0}^r$ a collection of codimension one $\Gamma$-submanifolds\footnote{possibly disconnected} (also possibly with boundary). The arrangement $(N,\mathcal E)$ is called {\it tidy} if 
\begin{enumerate}
\item
$N$ is contractible,
\item
the components of any intersection of the $E_i's$ are contractible,
\item
$\partial N\cap E_i=\partial E_i$ for each $i$, and
\item
$(N,\mathcal E)$ looks locally like a real hyperplane arrangement.
\end{enumerate}
\end{definition}
\begin{proof}
The universal cover $\widetilde{\mathcal U}$ together with the collection of orbits of walls in $\widetilde{\mathcal U}$ under the fundamental group $\pi_1(\mathcal U/G)=\pi_1\mathcal U\rtimes G$ satisfies the last three conditions of a tidy arrangement but not the first one. However, we note that the argument of Okun and Schreve works exactly the same way without the assumption that $\widetilde{\mathcal U}$ is contractible. We only need to know that it is simply connected. The conclusion of their argument is that removing the walls from $\widetilde{\mathcal U}$ does not affect $L^2$-homology in high dimensions, and does not increase $L^2$-homology just above the middle dimension. More precisely
\begin{eqnarray}
\label{remove}
H^{(2)}_{k}(\widetilde{\mathcal U}\setminus Walls)&\ra&H^{(2)}_{k}(\widetilde{\mathcal U})
\end{eqnarray}  
is an isomorphism for $k>\lfloor(n+1)/2\rfloor$ and injective for $k=\lfloor(n+1)/2\rfloor$. Removing the projections of the walls from $\mathcal U$ leaves a disjoint union of $M$ parametrized by $W$ so that $(\widetilde{\mathcal U}\setminus Walls)/(\pi_1U\rtimes G)\cong(\mathcal U\setminus Walls)/G\cong\coprod_{|W/G|}M$. Combining this decomposition, the induction principle (\ref{induction}) and the result that (\ref{remove}) is an isomorphism (injective) in appropriate dimensions gives the equation (inequality) in the statement of the theorem. 

\end{proof}
\begin{remark}
Poincare duality in $L^2$-homology implies that $b^{(2)}_k(\mathcal U/G)=b^{(2)}_{n-k}(\mathcal U/G)$, so this computes the $L^2$-Betti numbers except in dimensions $n/2$ (if $n$ is even) or $(n\pm 1)/2$ (if $n$ is odd).
\end{remark}

\begin{corollary}
\label{noqsinger}
There is a closed manifold with rationally acyclic universal cover and $L^2$-Betti numbers not concentrated in the middle dimension. Moreover, the smallest example of such a manifold has dimension $\leq 7$. 
\end{corollary}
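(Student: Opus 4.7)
The plan is to feed Theorem \ref{rationalmodels} for $\Gamma = F_2^4$ into the Davis reflection group construction and then extract a nonvanishing $L^2$-Betti number above the middle dimension via Theorem \ref{davismanifoldbetti}.

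Since $F_2$ is a one-dimensional duality group with finite classifying space $S^1\vee S^1$, its fourth cartesian power $\Gamma = F_2^4$ is a four-dimensional duality group with finite classifying space $B\Gamma = (S^1\vee S^1)^4$. The K\"unneth formula for $L^2$-Betti numbers, combined with $b^{(2)}_1(F_2)=1$ and $b^{(2)}_k(F_2)=0$ for $k\neq 1$, gives $b^{(2)}_k(F_2^4) = \delta_{k,4}$. Applying Theorem \ref{rationalmodels} with $d = 4$ and $r = 2$ (the condition $d+r\geq 5$ is satisfied), I get a compact $7$-manifold-with-boundary $(M,\partial)$ with $\pi_1 M = \pi_1\partial = F_2^4$ and rationally acyclic universal cover. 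The classifying map $M \to B\Gamma$ is a $\pi_1$-isomorphism that lifts to a map between rationally acyclic spaces, hence is a rational equivalence, and so by the $\mathbb{Q}$-invariance of $L^2$-Betti numbers $b^{(2)}_k(M) = b^{(2)}_k(F_2^4) = \delta_{k,4}$.

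Next, choose any flag triangulation $L$ of the (connected) boundary $\partial$ and form the Davis manifold $\mathcal{U}/G := \mathcal{U}(W,M)/G$ for a torsion-free finite-index subgroup $G$ of the associated right-angled Coxeter group $W$. By the final bullet of Section \ref{davissection}, $\mathcal{U}/G$ is a closed rationally aspherical $7$-manifold. Since $n=7$ is odd, the rational analog of Singer's conjecture would force every $b^{(2)}_k(\mathcal{U}/G)$ to vanish, so it suffices to produce one positive $L^2$-Betti number. With $n=7$, $\lfloor(n+1)/2\rfloor = 4$, and Theorem \ref{davismanifoldbetti} gives
\[
b^{(2)}_4(\mathcal{U}/G) \;\geq\; |W/G|\cdot b^{(2)}_4(M) \;=\; |W/G| \;>\; 0,
\]
which, together with $L^2$-Poincar\'e duality, shows that the $L^2$-Betti numbers of $\mathcal{U}/G$ are not concentrated in a single middle dimension.

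The main obstacle is that Theorem \ref{davismanifoldbetti} is conditional on the Singer conjecture in dimensions $<7$, which is unknown in dimensions $4$, $5$, $6$. I would handle this by a dichotomy: if the rational analog of Singer already fails in some dimension $\leq 6$, then a smaller counterexample is in hand; otherwise it holds in all dimensions $\leq 6$, which (via a rational analog of Theorem \ref{okunschrevetheorem}) translates into the rational version of the Okun--Schreve conjecture for rationally aspherical manifolds-with-boundary in dimensions $\leq 6$. This is exactly the input required for the Okun--Schreve handle-subtraction argument behind Theorem \ref{davismanifoldbetti} to run on rational $L^2$-chains, with the walls of the Davis construction (which in our setting are only rationally aspherical) replacing the aspherical walls of the original argument. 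Checking that this rational version of the argument carries through verbatim is the delicate technical step, but it yields the displayed inequality and thus a closed rationally aspherical manifold of dimension at most $7$ whose $L^2$-Betti numbers are not concentrated in the middle dimension.
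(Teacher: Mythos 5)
Your overall plan coincides with the paper's: take Theorem~\ref{rationalmodels} for $\Gamma=F_2^4$, $d=4$, $r=2$ to get a compact rationally aspherical $7$--manifold-with-boundary $(M,\partial)$ with $b_4^{(2)}(M)=1$, feed it into the Davis reflection group construction, and invoke Theorem~\ref{davismanifoldbetti} via a dichotomy on the Singer conjecture in lower dimensions. The front end of your argument is correct and matches the paper.

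The last paragraph, however, contains a misconception that leads you to see a nontrivial gap where there is none. You assert that the walls in the Davis arrangement for a merely rationally aspherical $M$ are ``only rationally aspherical'' and that one must therefore rerun the Okun--Schreve handle-subtraction argument with a rational analogue of Theorem~\ref{okunschrevetheorem}. This is not the case. The walls and their intersections in $\widetilde{\mathcal U}$ are built from copies of closed stars $St_v$ in a flag triangulation (and their intersections), which are contractible simplicial complexes; the proof of Theorem~\ref{davismanifoldbetti} explicitly notes that conditions (2)--(4) of a tidy arrangement hold and that only condition (1), contractibility of the ambient $\widetilde{\mathcal U}$, fails (and simple connectivity suffices there). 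Consequently the quotients of orbits of walls are honestly aspherical $(n-1)$--manifolds, and the input needed is exactly the (integral) Okun--Schreve Conjecture~\ref{os} in dimension $<7$, not a rational variant of it. Moreover, even granting your dichotomy on the \emph{rational} Singer conjecture in dimensions $\leq 6$, the second branch already gives you what you need without any new machinery: since every aspherical manifold is in particular rationally aspherical, the rational Singer conjecture in dimension $\leq 6$ implies the integral Singer conjecture in dimension $\leq 6$, and Theorem~\ref{davismanifoldbetti} then applies as stated. So the ``delicate technical step'' you flag is not required; the paper instead runs the dichotomy directly on the integral Singer conjecture in dimension $<7$ (if it fails, an aspherical counterexample is already a rationally aspherical one of dimension $<7$; if it holds, apply Theorem~\ref{davismanifoldbetti}), and the two dichotomies are interchangeable once one notes the implication above.
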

\begin{proof}If the Singer conjecture is false for some manifold of dimension $<7$, then we are done.

So, suppose it is true in dimension $<7$. By Theorem \ref{rationalmodels} we can build a $7$-dimensional compact manifold-with-boundary $(M,\partial)$ so that $\widetilde M$ is rationally acyclic and $\pi_1M=F_2^4$. The classifying map $M\ra BF_2^4$ is a rational equivalence, so the manifold $M$ has non-vanishing $4$-th $L^2$-Betti number $b_4^{(2)}(M)=b_4^{(2)}(F_2^4)=1$. By Theorem \ref{davismanifoldbetti} the closed $7$-manifold $\mathcal U(W,M)/G$ has non-vanishing $b_4^{(2)}$ and by Section \ref{qacyclic} the universal cover $\mathcal U(W,M)$ of this manifold is $\mathbb Q$-acyclic. 
\end{proof}

\subsection{On the rational analogue of the Euler characteristic conjecture}
The Corollary above gives counterexamples to a rational analogue of the Singer conjecture. The Singer conjecture itself was suggested as a way of establishing a more classical conjecture on the sign of the Euler characteristic of a closed aspherical manifold. (See \cite{davisonhopf} for a historical discussion.)
\begin{conjecture}[Euler characteristic conjecture]
If $N$ is a closed aspherical $2n$-manifold then $(-1)^n\chi(N)\geq 0$. 
\end{conjecture}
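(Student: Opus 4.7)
The plan is to reduce the statement to the Singer conjecture (Conjecture \ref{singer}) via the Euler characteristic formula for $L^2$-Betti numbers. For a closed aspherical $2n$-manifold $N$, the identity $\chi(N)=\sum_{k=0}^{2n}(-1)^k b_k^{(2)}(N)$, combined with Singer's vanishing outside the middle dimension, gives $\chi(N)=(-1)^n b_n^{(2)}(N)$; since $L^2$-Betti numbers are non-negative, this yields $(-1)^n\chi(N)\geq 0$ at once. In fact one only needs the weaker Davis-Okun statement (Conjecture \ref{davisokun}): Poincar\'e duality in $L^2$-homology forces $b_k^{(2)}(N)=b_{2n-k}^{(2)}(N)$ on the closed manifold $N$, so vanishing above the middle dimension automatically produces vanishing below it, and one recovers Singer's conclusion for $N$. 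Thus the entire content of the conjecture collapses onto establishing $L^2$-vanishing above the middle dimension for closed aspherical even-dimensional manifolds.

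To attack that vanishing, the most structured route in the present framework is the reflection-group machinery of Okun-Schreve (Theorem \ref{okunschrevetheorem}), which equates Singer in dimension $\leq 2n$ with the Okun-Schreve conjecture for compact aspherical manifolds-with-boundary in the same range. This opens up an induction on dimension in which one peels $\pi_1$-orbits of walls from the universal cover of a Davis reflection construction and controls the change in high-dimensional $L^2$-homology by long exact sequences, exactly as in the proof of Theorem \ref{davismanifoldbetti}. In geometric situations with extra structure (non-positive curvature, K\"ahler hyperbolicity, local symmetry, certain right-angled Coxeter groups) independent proofs of the Singer vanishing are already known, and the intrinsic task is to extend these to all closed aspherical manifolds without curvature hypotheses.

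The hard part is that no available technique handles an arbitrary closed aspherical $2n$-manifold, and this is where I would expect the argument to stall. Moreover, Corollary \ref{noqsinger} of the present paper shows that the strategy cannot rely on rational-homological data alone: the rational analogue of Singer's statement already fails for rationally aspherical closed manifolds in dimension $\leq 7$. So any mechanism that enforces the required $L^2$-vanishing for genuinely aspherical manifolds must be sensitive to torsion in the higher homotopy groups (or to some other information that distinguishes asphericity from rational asphericity) and cannot be extracted from the rational homotopy type. Designing an argument that uses this integral information, rather than just the rational structure, is the step I expect to be the main obstacle, and it is precisely what separates the Euler characteristic conjecture for aspherical manifolds from its refuted rational version.
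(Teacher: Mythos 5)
The statement you were asked to prove is labeled a \emph{conjecture} in the paper (the Hopf--Thurston Euler characteristic conjecture), and the paper offers no proof of it; it is an open problem. Your proposal correctly records the standard reduction --- given the Singer conjecture, the Euler characteristic formula $\chi(N)=\sum_k(-1)^k b_k^{(2)}(N)$ collapses to $\chi(N)=(-1)^n b_n^{(2)}(N)$, and non-negativity of $L^2$-Betti numbers gives $(-1)^n\chi(N)\geq 0$; and you are right that for a \emph{closed} manifold the Davis--Okun half-vanishing suffices by $L^2$-Poincar\'e duality. This is exactly the motivation the paper cites for Singer's conjecture. But the reduction trades one open conjecture for another: the Singer conjecture for arbitrary closed aspherical manifolds is not established anywhere (in this paper or elsewhere), so your argument does not constitute a proof, as you yourself acknowledge in the final paragraph.

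Your closing observation is the substantively correct takeaway and matches the paper's point of view: Corollary \ref{noqsinger} shows the rational analogue of Singer fails for closed rationally aspherical manifolds of dimension $\leq 7$, so any proof of the $L^2$-vanishing needed here must exploit genuine asphericity (torsion in higher homotopy groups) rather than the rational homotopy type alone. In short, there is no error of reasoning in what you wrote, but there is an irreducible gap --- the conjecture itself --- which neither you nor the paper closes, and which the paper's main results suggest cannot be closed by purely rational or purely analytic means.
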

\begin{remark}
For closed nonpositively curved manifolds, it is usually attributed to H. Hopf. Thurston suggested that it might be true for all closed aspherical manifolds. 
\end{remark}
Are there counterexamples to the rational analogue of the Euler characteristic conjecture?
\begin{question}
Is there a closed $2n$-manifold $N$ with rationally acyclic universal cover $\widetilde N$ and $(-1)^n\chi(N)<0$?
\end{question}
One approach would be to use Theorem \ref{rationalmodels} to build an $8$-manifold-with-boundary $(M,\partial)$ with fundamental group $F_2^5$ and rationally acyclic universal cover. Associated to a flag triangulation of the boundary is the closed $8$-manifold $\mathcal U(W,M)/G$. If we assume the integral Singer conjecture, then Theorem \ref{davismanifoldbetti} implies the Euler characteristic of this manifold is $\chi=b_{4}^{(2)}(\mathcal U/G)-2|W/G|$.\footnote{Note that the theorem does not give any restrictions on $b_4^{(2)}(\mathcal U/G)$.} Is there a flag triangulation of the boundary $\partial$ for which $\chi<0$?

\section{Appendix: Rationally splitting the Hurewicz map in some special cases.}
The goal of this appendix is to indicate how to obtain a special case (Theorem \ref{specialcase}) of the results in section \ref{rationalhomotopysection} using slightly more ``pedestrian'' rational homotopy theory. 
\subsection{Classical rational homotopy theory}
A general reference for everything in this subsection is \cite{felixhalperinthomas}.
The main result we will need is
\begin{theorem}
\label{freeliealgebra}
\begin{equation}
\label{loop}
\pi_{*}(\vee S^r)\otimes\mathbb Q\cong \pi_{*-1}(\Omega\vee S^r)\otimes\mathbb Q
\end{equation}
is the free graded Lie algebra on the vector space $\pi_{r-1}(\Omega\vee S^r)\otimes\mathbb Q$ in dimension $r-1$, 
and the isomorphism (\ref{loop}) sends Whitehead products\footnote{See definition below.} to $(\pm)$ Lie brackets.
\end{theorem}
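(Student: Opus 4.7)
The plan is to deduce this from two classical results of rational homotopy theory: the Bott--Samelson computation of loop space homology of a suspension, and the Milnor--Moore theorem identifying the rational homotopy of a loop space with the primitives of a Hopf algebra. Write $\vee S^r=\Sigma(\vee S^{r-1})$ and let $V:=\widetilde H_{r-1}(\vee S^{r-1};\mathbb Q)$, a rational vector space concentrated in degree $r-1$ with one generator per sphere in the wedge. The Bott--Samelson theorem then gives an isomorphism of graded Hopf algebras $H_*(\Omega\vee S^r;\mathbb Q)\cong T(V)$, with all of $V\subset T(V)$ primitive.

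Next I would apply Milnor--Moore: for a simply connected space $X$ of finite rational type, the rational Hurewicz map $\pi_*(\Omega X)\otimes\mathbb Q\hookrightarrow H_*(\Omega X;\mathbb Q)$ is an injection onto the graded sub-Lie algebra of primitives (with bracket the Samelson product), and extends to a Hopf algebra isomorphism $U(\pi_*(\Omega X)\otimes\mathbb Q)\cong H_*(\Omega X;\mathbb Q)$. Applied to $X=\vee S^r$ and combined with the first step, this identifies $\pi_*(\Omega\vee S^r)\otimes\mathbb Q$ with the graded Lie algebra of primitives in $T(V)$. Over a field of characteristic zero these primitives are exactly the free graded Lie algebra $L(V)$: the natural map $L(V)\to T(V)$ lands in the primitives, its universal enveloping algebra is $T(V)$ by the universal properties of $L$ and $T$, and Poincar\'e--Birkhoff--Witt then forces $U(L(V))\hookrightarrow T(V)$ to be an isomorphism of Hopf algebras, so $L(V)$ is all of the primitives. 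This yields $\pi_*(\Omega\vee S^r)\otimes\mathbb Q\cong L(V)$ as graded Lie algebras.

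Finally, the loop-suspension adjunction gives the shift $\pi_*(\vee S^r)\otimes\mathbb Q\cong\pi_{*-1}(\Omega\vee S^r)\otimes\mathbb Q$ of (\ref{loop}), and it is classical (see e.g.\ \cite{felixhalperinthomas}) that this adjunction sends Whitehead products in $\pi_*(X)$ to Samelson products in $\pi_{*-1}(\Omega X)$ up to a sign depending on parities. Under the identification of the previous paragraph the Samelson product is the Lie bracket in $L(V)$, so Whitehead products correspond to $(\pm)$ Lie brackets, as claimed. The proof is essentially an assembly of well-known ingredients; the only place where care is needed is tracking signs in the graded setting (the cases $r$ odd and $r$ even behave differently on the nose), but this is routine bookkeeping rather than a genuine obstacle. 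The real content sits in Bott--Samelson and Milnor--Moore, both of which are cleanly available rationally.
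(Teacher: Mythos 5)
Your proof is correct and follows essentially the same route as the paper: apply Bott--Samelson to identify $H_*(\Omega\vee S^r;\mathbb Q)$ with the tensor algebra $T(V)$, identify $\pi_*(\Omega\vee S^r)\otimes\mathbb Q$ with the primitives (the paper quotes this as the Cartan--Serre theorem, FHT 16.10, which is the same ingredient you are calling Milnor--Moore), observe that the primitives of $T(V)$ form the free graded Lie algebra $L(V)$, and use the classical Whitehead/Samelson correspondence (FHT 16.11) for the bracket statement. The only cosmetic difference is that you spell out the PBW argument that $P(T(V))=L(V)$, whereas the paper simply cites Chapter 21 of \cite{felixhalperinthomas} for this fact.
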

This result follows from a tensor algebra description of the rational homology of the loop space $H_*(\Omega\vee S^r;\mathbb Q)$ and the identification of the rational homotopy groups as the primitive elements in this tensor algebra.
\subsubsection{Diagonals and primitive elements} An element $x\in H_*(X;\mathbb Q)$ is {\it primitive} if the diagonal
$$
\Delta:H_*(X;\mathbb Q)\ra H_*(X\times X;\mathbb Q)\cong H_*(X;\mathbb Q)\otimes H_*(X;\mathbb Q)
$$ sends $\Delta(x)=x\otimes 1+1\otimes x$. Let $P_*(X;\mathbb Q)\subset H_*(X;\mathbb Q)$ be the subspace of primitive elements. Any map $X\ra Y$ sends primitive elements to primitive elements. Since $H_r(S^r;\mathbb Q)$ is primitive, it follows that the image of the rational Hurewicz map consists of primitive elements. 
\subsubsection{Primitive elements in a loop space}
For loop spaces, more is true (16.10 of \cite{felixhalperinthomas}):
\begin{theorem}[Cartan-Serre]
The Hurewicz homomorphism gives an isomorphism
\begin{equation}
\pi_*(\Omega X)\otimes\mathbb Q\cong P_*(\Omega X;\mathbb Q).
\end{equation}
\end{theorem}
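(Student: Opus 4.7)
My plan is to exploit two structural features of $\Omega X$: first, it is a (grouplike) H-space, so $H_*(\Omega X;\mathbb Q)$ carries a graded Hopf algebra structure (Pontryagin product and diagonal coproduct); second, any class in $\pi_k(\Omega X)$ factors through $S^k$, whose reduced rational fundamental class is obviously primitive, so the rational Hurewicz map automatically lands in $P_*(\Omega X;\mathbb Q)$. This gives a natural candidate map, and the task is to show it is an isomorphism.

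First I would establish the rational splitting of simply connected H-spaces: any simply connected H-space $Y$ is rationally equivalent to a (weak) product $\prod_n K(\pi_n(Y)\otimes\mathbb Q, n)$ of rational Eilenberg--MacLane spaces. The cleanest route is via minimal Sullivan models, where one shows that for an H-space the comultiplication induced by the H-space product forces the minimal model to have zero differential --- equivalently, all rational $k$-invariants in the Postnikov tower vanish. Applied to $Y=\Omega X$, this reduces the theorem to the case of a product of rational Eilenberg--MacLane spaces.

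Second, on each factor $K(V,n)$ the classical computation gives $H_*(K(V,n);\mathbb Q)$ as the free graded-commutative algebra on $V$ placed in degree $n$ (exterior if $n$ is odd, polynomial if $n$ is even); its primitives are exactly $V$ sitting in degree $n$, and the rational Hurewicz map $\pi_n\otimes\mathbb Q\to V$ is the identity. Since primitives of a tensor product of connected, graded-cocommutative Hopf algebras over $\mathbb Q$ satisfy $P_*(A\otimes B)=P_*(A)\oplus P_*(B)$, combining with the K\"unneth theorem and the rational product splitting above yields the Cartan--Serre isomorphism
$$
\pi_*(\Omega X)\otimes\mathbb Q\;\xrightarrow{\;\cong\;}\;P_*(\Omega X;\mathbb Q).
$$

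The main obstacle is the rational splitting step; once it is in hand, everything else is routine bookkeeping with Hopf algebra and K\"unneth structure. A more direct route that avoids invoking Sullivan's machinery is to appeal to the Milnor--Moore theorem: the rational Pontryagin ring $H_*(\Omega X;\mathbb Q)$ is a connected, graded-cocommutative Hopf algebra, hence (in characteristic zero) the universal enveloping algebra $U(L)$ of its Lie algebra of primitives $L=P_*(\Omega X;\mathbb Q)$. One then identifies $L$ with $\pi_*(\Omega X)\otimes\mathbb Q$ (equipped with the Samelson/Whitehead bracket) by comparing with the homology of Quillen's Lie model already used in Section \ref{rationalhomotopysection}, which gives the isomorphism on the nose.
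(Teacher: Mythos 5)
The paper does not prove the Cartan--Serre theorem; it cites it as 16.10 of F\'elix--Halperin--Thomas, so there is no in-paper argument to compare against. Your first route (rationally split the H-space $\Omega X$ into a product of Eilenberg--MacLane spaces via the vanishing of the differential in its minimal Sullivan model, compute the primitives of each $K(V,n)$ factor, and use that primitives are additive under tensor product of connected Hopf algebras) is essentially the textbook argument and is correct in outline. Two small points to tighten: $\Omega X$ is connected but usually not simply connected (its $\pi_1$ is $\pi_2(X)$), so you want the splitting theorem for connected nilpotent H-spaces rather than simply connected ones; and the minimal-model version of that splitting as usually stated carries a finite-type hypothesis, which is satisfied in the paper's appendix (where $\Gamma=F_2^d$) but should be flagged.

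Your Milnor--Moore alternative is underspecified at the crucial step. Milnor--Moore gives $H_*(\Omega X;\mathbb Q)\cong U(P)$ for $P$ the primitives, and naturality of diagonals puts the Hurewicz image inside $P$, but neither fact says the Hurewicz map $\pi_*(\Omega X)\otimes\mathbb Q\to P$ is a bijection --- that is precisely the statement to prove. To close the gap you need an independent identification of $H_*(\Omega X;\mathbb Q)$ with $U(\pi_*(\Omega X)\otimes\mathbb Q)$, for example via the cobar/Adams--Hilton model and the fact that $U$ commutes with taking homology in characteristic zero, together with a check that the resulting identification of primitives is induced by the Hurewicz map. Gesturing at ``the homology of Quillen's Lie model'' glosses over exactly this, and one must also make sure the version of Quillen's theory invoked is not itself proved using Cartan--Serre. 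The first route is the safer one to flesh out.
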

\subsubsection{Lie algebras structures on homotopy groups}
There are two ways to make the rational homotopy groups into a Lie algebra. 
\begin{itemize}
\item
For any $f\in\pi_k(X),g\in\pi_n(X)$, composing the attaching map $S^{k+n-1}\ra S^k\vee S^n$ of the top dimensional cell in $S^k\times S^n$ with the wedge map $f\vee g:S^k\vee S^n\ra X$ gives the {\it Whitehead product} 
\begin{eqnarray}
\pi_k(X)\times\pi_n(X)&\ra&\pi_{k+n-1}(X),\\
(f,g)&\mapsto&[f,g]_W.
\end{eqnarray} 
It turns out that this satisfies the Lie bracket identities rationally. 
\item
Concatenating loops in the loop space $\Omega X$ gives a map $\Omega X\times\Omega X\ra\Omega X$, a multiplication
\begin{equation}
\label{multiplication}
H_*(\Omega X;\mathbb Q)\otimes H_*(\Omega X;\mathbb Q)\cong H_*(\Omega X\times\Omega X;\mathbb Q)\ra H_*(\Omega X;\mathbb Q)
\end{equation} 
and from this a {\it Lie bracket} $[\cdot,\cdot]$ on $H_*(\Omega X;\mathbb Q)$. This Lie bracket preserves the subspace of primitive elements $P_*(\Omega X;\mathbb Q)\cong \pi_*(\Omega X)\otimes\mathbb Q$.
\end{itemize}
Whitehead products and Lie brackets are related by the map
$$
\theta:\pi_*(X)\cong\pi_{*-1}(\Omega X)\ra H_{*-1}(\Omega X;\mathbb Q).
$$  
\begin{proposition}[16.11 of \cite{felixhalperinthomas}]
\label{products}
For $f\in\pi_k(X),g\in\pi_n(X)$ we have 
$$
\theta[f,g]_W=(-1)^{k}[\theta f,\theta g].
$$
\end{proposition}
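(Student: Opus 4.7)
My plan is to factor the identity through an intermediate object on $\Omega X$, the \emph{Samelson product}. For $\alpha\colon S^a\to\Omega X$ and $\beta\colon S^b\to\Omega X$, one defines the Samelson product $\langle\alpha,\beta\rangle\colon S^{a+b}\to\Omega X$ as follows: the graded commutator of loops $(x,y)\mapsto\alpha(x)\cdot\beta(y)\cdot\alpha(x)^{-1}\cdot\beta(y)^{-1}$ gives a map $S^a\times S^b\to\Omega X$ which is null on the wedge $S^a\vee S^b$ (since $\alpha\cdot\alpha^{-1}$ and $\beta\cdot\beta^{-1}$ are the constant loop), so it factors through the smash $S^a\wedge S^b=S^{a+b}$. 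The point is that on one hand $\langle\cdot,\cdot\rangle$ corresponds (up to a sign accounting for adjunction shifts) to the Whitehead bracket on $\pi_*(X)$, and on the other hand it corresponds, under the Hurewicz map, to the graded commutator in the Pontryagin ring, i.e. to the Lie bracket $[\cdot,\cdot]$.

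The first link is the classical identity $\widetilde{[f,g]_W}=(-1)^{k}\langle\tilde f,\tilde g\rangle$ in $\pi_{k+n-2}(\Omega X)$, where $\tilde{}\,$ denotes the loop-space adjoint. The reason is that the attaching map $w\colon S^{k+n-1}\to S^k\vee S^n$ of the top cell of $S^k\times S^n$ is, after looping and taking adjoints, homotopic (up to a sign $(-1)^k$ coming from shuffling suspension coordinates past the degree-$k$ class) to the commutator word on the two generators; composing with $f\vee g$ and passing to adjoints yields the displayed relation. This step is essentially the content of the classical ``Whitehead product $=$ Samelson product'' comparison (see e.g.\ G.\ Whitehead, \emph{Elements of Homotopy Theory}, X.7), and the sign $(-1)^k$ is exactly the same one that appears in the statement we are trying to prove.

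The second link is easier. Since concatenation of loops induces the Pontryagin product (\ref{multiplication}) and the Hurewicz map is a ring homomorphism with respect to it, the image in $H_{*}(\Omega X;\mathbb{Q})$ of a composition $\alpha\cdot\beta$ of spherical classes is just the Pontryagin product of the images. Applied to the commutator word defining $\langle\tilde f,\tilde g\rangle$, Hurewicz sends it to
$$
\theta f\cdot\theta g-(-1)^{(k-1)(n-1)}\theta g\cdot\theta f=[\theta f,\theta g],
$$
the graded commutator on $H_{*}(\Omega X;\mathbb{Q})$ in degrees shifted by $1$. Composing this identification with the formula from the previous paragraph gives $\theta[f,g]_W=(-1)^k[\theta f,\theta g]$, as required.

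The main obstacle will be bookkeeping of signs: the adjunction $\pi_k(X)\cong\pi_{k-1}(\Omega X)$ shifts degrees by one, so all graded signs in the target group live one degree lower than the corresponding signs on the source, and the $(-1)^k$ factor is precisely the difference between the Koszul sign $(-1)^{kn}$ appearing in the Whitehead-bracket convention and the sign $(-1)^{(k-1)(n-1)}$ appearing in the graded commutator on $H_*(\Omega X;\mathbb{Q})$. Once one fixes consistent conventions for the loop multiplication, the adjunction, and the attaching map of $S^k\times S^n$, the identity $kn-(k-1)(n-1)=k+n-1\equiv k+n-1\pmod 2$ and the intrinsic $(-1)^k$ from the adjoint of the commutator word combine to give the stated $(-1)^k$.
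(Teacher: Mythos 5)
The paper does not actually prove this proposition --- it cites it directly as Proposition 16.11 of F\'elix--Halperin--Thomas. Your route through the Samelson product is precisely the standard one (and the one used in that reference, and also in G.~Whitehead's book): relate the Whitehead bracket to the Samelson product via adjunction, then push the Samelson product through the Hurewicz map, which is a ring homomorphism for the Pontryagin product, to land on the graded commutator. The argument structure is correct. The only weak spot is the final sign discussion, which reads as a consistency check on the antisymmetry conventions ($(-1)^{kn}$ for Whitehead versus $-(-1)^{(k-1)(n-1)}$ for the shifted Lie bracket) rather than a derivation of the $(-1)^k$ itself; the actual source of the sign is the orientation comparison in the homotopy $S^{k+n-1}\simeq S^{k-1}\wedge S^{n-1}\wedge S^1$ underlying the Whitehead--Samelson adjunction, which you gesture at (``shuffling suspension coordinates'') but do not pin down. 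Since the paper simply invokes the result, this level of precision matches what the paper asks of the reader.
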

\subsubsection{Homology of the loop space of a suspension}
The rational homology of the loop space of a suspension\footnote{Here we pick a basepoint $y\in Y$ and the suspension is the {\it reduced suspection} $\Sigma Y= Y\times I/(Y\times \{0,1\}\cup\{y\}\times I)$} has the following description as a tensor algebra\footnote{with multipication given by (\ref{multiplication})} (\cite{bottsamelson}, see also 4J.1 in \cite{hatcher} and Corollary 4.1.5 in \cite{neisendorfer} for the last bit).
\begin{theorem}[Bott-Samelson]For a connected space $Y$
$$
H_*(\Omega\Sigma Y;\mathbb Q)\cong T(\overline H_*(Y;\mathbb Q))
$$ 
with the diagonal map on $H_*(\Omega\Sigma Y;\mathbb Q)$ determined by the diagonal map on $H_*(Y;\mathbb Q)$.
\end{theorem}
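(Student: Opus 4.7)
\medskip

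The plan is to prove the Bott-Samelson theorem using the James construction as a combinatorial model of $\Omega\Sigma Y$, together with the Künneth formula over a field and naturality of the diagonal. Recall the James construction $JY=\coprod_n Y^n/\sim$, where the relation collapses subsequences containing the basepoint. It is a topological monoid under concatenation of words, and James's theorem gives a weak equivalence of H-spaces $JY\simeq\Omega\Sigma Y$ whenever $Y$ is a connected, well-pointed CW complex. So it suffices to compute $H_*(JY;\mathbb Q)$ as a Hopf algebra.

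First I would exploit the word-length filtration $J_0Y\subset J_1Y\subset\dots\subset JY$, whose successive quotients are the smash powers $J_nY/J_{n-1}Y\cong Y^{\wedge n}$. Applying rational homology to this cofibration of CW pairs and using the Künneth formula over $\mathbb Q$ gives $\tilde H_*(Y^{\wedge n};\mathbb Q)\cong\overline H_*(Y;\mathbb Q)^{\otimes n}$. Because we are working over the field $\mathbb Q$ and $Y$ is connected (so the filtration is increasing in degree), the associated graded identification combined with either a direct splitting argument (using that $J_{n-1}Y\hookrightarrow J_nY$ admits a retraction after rationalization, or more simply by induction on the skeleta using the long exact sequence of the pair) yields an additive isomorphism
\begin{equation*}
H_*(JY;\mathbb Q)\;\cong\;\bigoplus_{n\geq 0}\overline H_*(Y;\mathbb Q)^{\otimes n}\;=\;T(\overline H_*(Y;\mathbb Q)).
\end{equation*}

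Next I would identify the algebra structure. The monoid structure $JY\times JY\to JY$ restricts on $J_mY\times J_nY$ to a map into $J_{m+n}Y$ that on associated graded pieces is precisely the natural map $Y^{\wedge m}\wedge Y^{\wedge n}\to Y^{\wedge(m+n)}$, i.e.\ concatenation of words. Under the Künneth identification above, this is exactly the tensor concatenation $T^m\otimes T^n\to T^{m+n}$, so the multiplication on $H_*(JY;\mathbb Q)$ inherited from loop concatenation coincides with the free algebra multiplication on $T(\overline H_*(Y;\mathbb Q))$. For the coalgebra structure, naturality of the diagonal applied to the inclusion $Y=J_1Y\hookrightarrow JY$ together with the fact that the diagonal is an algebra map (since loop concatenation is an H-map) forces the comultiplication to be the unique algebra map extending the diagonal of $\overline H_*(Y;\mathbb Q)$, which is the claimed statement.

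The main subtlety, and where I would expect to spend the most care, is verifying the additive splitting of the James filtration rationally. Over a field this is standard (one can argue inductively that the connecting maps in the long exact sequences of the pairs $(J_nY,J_{n-1}Y)$ vanish, using that $Y$ is connected so that degrees of elements in $Y^{\wedge n}$ grow with $n$, preventing any nontrivial boundary from hitting a lower filtration piece once one works stably on a compact piece in a fixed degree). Alternatively, one can bypass the splitting issue altogether by running the Serre spectral sequence of the path-loop fibration $\Omega\Sigma Y\to P\Sigma Y\to\Sigma Y$, noting that rationally this spectral sequence is multiplicative and collapses for dimensional reasons, giving the same answer; this is the route I would take if the filtration splitting proved awkward.
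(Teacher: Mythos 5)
The paper does not prove this statement; it cites \cite{bottsamelson} together with 4J.1 of \cite{hatcher} and Corollary 4.1.5 of \cite{neisendorfer}. Your James-construction strategy is precisely the one in \cite{hatcher}, so at the level of approach you are following the cited reference.

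The one place your sketch is not yet a proof is the additive splitting of the James filtration, which you correctly flag as the subtle step but then justify incorrectly. The degree argument --- that $\overline H_*(Y^{\wedge n};\mathbb Q)$ sits in degrees $\geq n$ and this ``prevents any nontrivial boundary'' --- does not show that the connecting map $\overline H_k(Y^{\wedge n};\mathbb Q)\to H_{k-1}(J_{n-1}Y;\mathbb Q)$ vanishes; the target group is perfectly capable of being nonzero in degree $k-1$, so nothing about degrees forces the map to be zero. The standard fix uses the monoid structure you invoke later: the iterated product $Y^{\times n}\to J_nY$ composed with the quotient $J_nY\to Y^{\wedge n}$ is the collapse $Y^{\times n}\to Y^{\wedge n}$, which is split surjective on $\mathbb Q$-homology by K\"unneth, hence $H_*(J_nY;\mathbb Q)\to\overline H_*(Y^{\wedge n};\mathbb Q)$ is onto and the long exact sequence of the pair splits. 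Also be aware that your fallback via the Serre spectral sequence of $\Omega\Sigma Y\to P\Sigma Y\to\Sigma Y$ is described loosely: that spectral sequence does not ``collapse for dimensional reasons'' --- it converges to $\mathbb Q$ concentrated in degree $0$, and one must solve recursively for $H_*(\Omega\Sigma Y;\mathbb Q)$; for general $Y$ with $H_*(\Sigma Y;\mathbb Q)$ spread over several degrees this bookkeeping is genuinely more involved than for $\Sigma Y=S^n$. Once the splitting is established, your identification of the Pontryagin product with tensor concatenation and of the coproduct (via naturality along $J_1Y\hookrightarrow JY$ and the fact that the comultiplication on the homology of a topological monoid is an algebra map) is correct and gives the stated Hopf-algebra structure.
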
 
\begin{remark}If $Y=\vee S^{r-1}$ then the diagonal map on $H_*(Y;\mathbb Q)$ has to be trivial.\footnote{Given by $\Delta(y)=y\otimes 1+1\otimes y$ because there are no homology classes in dimensions between $0$ and $r-1$.} It follows, for such $Y$, that the primitive elements of $H_*(\Omega\Sigma Y;\mathbb Q)$ are precisely the free graded Lie algebra $L(\overline H_*(Y;\mathbb Q))$ inside the tensor algebra (see Chapter 21 of \cite{felixhalperinthomas}).
\end{remark}

\begin{proof}[Proof of Theorem \ref{freeliealgebra}]
We combine these for $Y=\vee S^{r-1}, X=\Sigma Y=\vee S^r$ to prove the theorem. 
The Cartan-Serre theorem identifies the rational homotopy groups $\pi_*(\Omega\vee S^r)\otimes\mathbb Q$ as the primitive elements in $H_*(\Omega\vee S^r;\mathbb Q)$ and the Bott-Samelson theorem (together with the remark following it) implies the primitive elements in $H_*(\Omega\vee S^r;\mathbb Q)=H_*(\Omega\Sigma\vee S^{r-1};\mathbb Q)$ are the free graded Lie algebra on $H_{r-1}(\vee S^{r-1};\mathbb Q)\cong H_r(\vee S^r;\mathbb Q)\cong\pi_r(\vee S^r)\otimes\mathbb Q\cong \pi_{r-1}(\Omega\vee S^r)\otimes\mathbb Q$. Finally, Proposition \ref{products} relates Whitehead products with Lie brackets.
\end{proof}

\subsection{A special case}
In the rest of this appendix we will prove the following special case of Theorem \ref{rationalmoorespace}.
\begin{theorem}
\label{specialcase}
For $d\geq 4$, $\Gamma=F_2^d$ and $r>d/2$ there is a $\Gamma$-complex $V$ with rational homology $\overline H_*(V;\mathbb Q)$ concentrated in dimension $r$ and equal to $D\otimes\mathbb Q$.  
\end{theorem}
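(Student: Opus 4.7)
The plan is to carry out the inductive construction behind Proposition \ref{rationalsplitting}, but using the classical tools of rational homotopy theory just recalled (particularly Theorem \ref{freeliealgebra}) in place of the cellular Lie model to describe the Hurewicz map and its kernel explicitly, and then to obtain the required $\Gamma$-equivariant splittings by a direct $Ext$-computation over $\Q F_2^d$.

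First I would fix a finite free resolution $0\ra F_{r+d}\ra\dots\ra F_r\ra D\otimes\Q\ra 0$ of the rationalized dualizing module, and start with $V^r$ a $\Gamma$-equivariant wedge of $r$-spheres realizing $F_r$. Theorem \ref{freeliealgebra} identifies $\pi_*(V^r)\otimes\Q$ as the free graded Lie algebra on $F_r$, so the Hurewicz map has an evident $\Gamma$-equivariant section, with kernel spanned by iterated Whitehead products (which by Proposition \ref{products} correspond to Lie brackets).

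Then I would proceed one skeleton at a time: at stage $n$ attach $(n+1)$-cells labeled by $F_{n+1}$ along the image of the current section to kill $H_n(X^n;\Q)$ (exactly as in the proof of Proposition \ref{rationalsplitting}), and verify that the resulting short exact sequence of $\Q\Gamma$-modules
\[
0\ra K_{n+1}\ra\pi_{n+1}(X^{n+1})\otimes\Q\ra H_{n+1}(X^{n+1};\Q)\ra 0
\]
splits equivariantly. The hypothesis $r>d/2$ forces $n+1<3r-1$ throughout the range $r\leq n<r+d$, so only Whitehead products of length $\leq 2$ contribute rationally; combining this with Theorem \ref{freeliealgebra} and Proposition \ref{products}, one writes $K_{n+1}$ as a summand of tensor squares of the free modules $F_i$ in the resolution. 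Since tensor squares of free $\Q\Gamma$-modules with diagonal action are again free (a direct computation for $\Gamma=F_2^d$), $K_{n+1}$ is in particular a projective $\Q\Gamma$-module.

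The main obstacle is to show $Ext^1_{\Q\Gamma}(H_{n+1}(X^{n+1};\Q),K_{n+1})=0$ at each intermediate stage so that the sequence above splits. I would attack this using the K\"unneth factorization $\Q\Gamma\cong\Q F_2^{\otimes d}$, the fact that each $\Q F_2$ is hereditary (as $F_2$ is $1$-dimensional), and an explicit bound on the projective dimension of the modules appearing in the resolution $F_*$ obtained from the hypotheses $d\geq 4$ and $r>d/2$. Once the Ext-vanishing is verified the induction goes through, producing a finite $\Gamma$-complex $V$ with $\overline H_*(V;\Q)$ concentrated in dimension $r$ and equal to $D\otimes\Q$.
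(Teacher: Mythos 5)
Your outline correctly identifies the overall strategy (inductively attach cells, split the Hurewicz extension at each stage via an $Ext^1$-vanishing), and you are right that only Whitehead products of small weight enter in the relevant range. However, the decisive step of your argument contains a genuine error, and in fact two.

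First, the description of the kernel is wrong. After you have attached cells up through dimension $n$, the bottom rational homotopy group is $\pi_r(V^n)\otimes\Q\cong D\otimes\Q$, not $F_r$: the cells you attached precisely killed the surplus in $F_r$ down to $D$. Theorem \ref{freeliealgebra} and Proposition \ref{products} therefore identify the Hurewicz kernel in degree $2r-1$ with $[D,D]$ (the degree-$(2r-2)$ piece of the free Lie algebra on $D$ in degree $r-1$), not with brackets of the free chain modules $F_i$. Since $D$ is the dualizing module of a $d$-dimensional duality group, it has projective dimension $d$ over $\Q\Gamma$ and is far from free; so the lemma ``tensor squares of free modules with diagonal action are free'' simply does not apply, and the conclusion that $K_{n+1}$ is projective does not follow.

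Second, even if $K_{n+1}$ were projective, this would not force the short exact sequence $0\ra K_{n+1}\ra\pi_{n+1}\ra H_{n+1}\ra 0$ to split. Splitting is governed by $Ext^1_{\Q\Gamma}(H_{n+1},K_{n+1})$, which vanishes automatically when the \emph{quotient} $H_{n+1}$ is projective or the \emph{kernel} $K_{n+1}$ is \emph{injective}; projectivity of $K_{n+1}$ gives you nothing here. What is actually needed, and what the paper proves, is the following concrete computation: using the free resolution $F_{*}\cong C_{d+r-*}(E\Gamma)^*$ one identifies $Ext^1_\Gamma(H_k(V^k),K_k)\cong H_{d+r-(k+2)}(B\Gamma;K_k)$; a degree count using $d-1>r>d/2$ shows the only case in which this group homology sits in a positive degree is $k=2r-1$, where $K_{2r-1}\cong[D,D]$; and finally the key vanishing $H_{>0}(BF_2^d;D\otimes D)=0$ is proved by the K\"unneth formula together with the splitting $D(F_2^d)\cong D(F_2)^{\otimes d}$ and the base case $H_1(BF_2;D(F_2)\otimes D(F_2))\cong H^0(BF_2;D(F_2))=D(F_2)^{F_2}=0$. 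Your proposal gestures at a K\"unneth/hereditary argument, but without this explicit duality-based identification of the $Ext$ group and the explicit computation for $F_2$, the splitting is not established.

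As a minor additional point, the inequality $n+1<3r-1$ can fail at the top of the range when $d$ is odd and $r=(d+1)/2$ (e.g.\ $d=5$, $r=3$): there $r+d=3r-1$. This is harmless in the paper's argument because $Ext^1$ at $k=3r-2$ is computed by a group homology in a \emph{negative} degree and thus vanishes trivially, but it does mean your assertion that only weight-two brackets occur is not literally correct.

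Note also that the paper's Section 1 proof (Proposition \ref{rationalsplitting}) sidesteps the $Ext$ computation entirely by producing a section at the level of the cellular Lie model, where the relevant modules are free chain groups; that is a genuinely different mechanism from the appendix's $Ext$-vanishing and is the one that applies to general duality groups. Your proposal sits between the two and, as written, inherits the weaknesses of each without the saving ingredient of either.
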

\begin{remark}
We show this for $d-1>r>d/2$. It follows for all larger $r$ by taking suspensions.
\end{remark}

Let $D$ be the dualizing module of the group $\Gamma$ and $0\ra F_{d+r}\ra\dots\ra F_r\ra D\ra 0$ the resolution given by $F_{k}=Hom_{\Gamma}(C_{d+r-k}(E\Gamma);\mathbb Z\Gamma)=C_{d+r-k}(E\Gamma)^*$. Build an $(r+1)$-dimensional $\Gamma$-complex $V^{r+1}=F_r\cup F_{r+1}$ with $F_r$ represented by a wedge of spheres and $F_{r+1}$ attached so that $H_r(V^{r+1})\cong D$. Identify $D$ as an abelian group with the homology of a wedge of $r$-spheres $H_r(\vee S^r)$  and build a homology isomorphism $f_{r+1}:V^{r+1}\ra\vee S^r$. 
This map is rationally $(r+1)$-connected.\footnote{It is rationally onto on $\pi_*$ because $\pi_*(\vee S^r)\otimes\mathbb Q$ is generated by Whitehead products of $\pi_r(V^r)\cong\pi_r(\vee S^r)$.} We construct the $\Gamma$-complex $V$ inductively. The induction step is expressed in the following claim, which is proved in the remainder of the appendix.  
\begin{itemize}
\item[\bf{Claim:}]
Given a rationally $k$-connected map $f_k:V^{k}\ra\vee S^r$ we can $\Gamma$-equvariantly attach $(k+1)$-cells and get a $\Gamma$-complex $V^{k+1}=V^k\cup F_{k+1}$ and a rationally $(k+1)$-connected map $f_{k+1}:V^{k+1}\ra\vee S^r$.
\end{itemize} 
\begin{remark}
From now on, everything will be done rationally and we will sometimes omit this from the notation.
\end{remark}
Let $E$ be the homotopy fibre of $V^k\ra\vee S^r$. It is $(r-1)$-connected so $\pi_k(E)=H_k(E)$ and the spectral sequence of the fibration $E\ra V^k\ra\vee S^r$ implies that $H_k(E)\cong H_k(V^k)$. From this it follows that the rational Hurewicz map $\pi_k(V^k)\ra H_k(V^k)$ is onto (see the diagram below). Let $K_k$ be its kernel.
\begin{equation}
\begin{array}{ccccccc}
&&0&&&&\\
&&\downarrow&&&&\\
&&K_k&&&&\\
&&\downarrow&&&&\\
\pi_k(E)&\ra&\pi_k(V^k)&\ra&\pi_k(\vee S^r)&\ra &0\\
||&&\downarrow&&&&\\
H_k(E)&\cong&H_k(V^k)&&&&
\end{array}
\end{equation} 
From the diagram it also follows that the map $\pi_k(E)\ra \pi_k(V^k)$ is injective and that the composite map $K_k\ra\pi_k(\vee S^r)\cong \pi_{k-1}(\Omega\vee S^r)$ is an isomorphism. We describe $K_*$ as a $\mathbb Q\Gamma$-module. 
\begin{lemma}
The kernel $K_k$ is precisely the subspace of Whitehead products in $\pi_k(V^k)$. The map $\theta f_k:\pi_*(V^k)\ra\pi_{*-1}(\Omega\vee S^r)$ identifies it with $\pi_{k-1}(\Omega\vee S^r)$. So, $K_k$ is the degree $k-1$ piece of the free Lie algebra $L(D)$ (on the $\mathbb Q\Gamma$-module $D$ placed in degree $r-1$). 
In particular, the kernel $K_*$ is non-zero only for $*=r+n(r-1),n>0$ and
$K_{2r-1}\cong[D,D]$.
\end{lemma}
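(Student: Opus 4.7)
The plan is to combine the isomorphism $K_k \cong \pi_k(\vee S^r)\otimes \mathbb Q$ (already visible from the diagram, since the rational Hurewicz map on $V^k$ is surjective and its restriction to the subspace $\pi_k(E) \hookrightarrow \pi_k(V^k)$ is an isomorphism, producing a splitting) with Theorem \ref{freeliealgebra}, which identifies $\pi_{*-1}(\Omega\vee S^r)\otimes\mathbb Q$ as the free graded Lie algebra $L(D)$ on $D$ placed in degree $r-1$, and sends Whitehead products to ($\pm$) Lie brackets via $\theta$.

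The key step is to exhibit every element of $K_k$ as an iterated Whitehead product in $\pi_k(V^k)$. Since $f_k$ is rationally $k$-connected and $r<k$, the induced map $\pi_r(V^k)\otimes\mathbb Q\stackrel{\cong}\longrightarrow\pi_r(\vee S^r)\otimes\mathbb Q = D$ is an isomorphism of $\Gamma$-modules, so I would lift $\mathbb Q\Gamma$-generators of $D$ to elements $\tilde d_i\in\pi_r(V^k)\otimes\mathbb Q$. Let $W_k\subseteq\pi_k(V^k)\otimes\mathbb Q$ denote the $\mathbb Q\Gamma$-subspace generated by iterated Whitehead products of the $\tilde d_i$ of total degree $k$. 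Every such element lies in $K_k$ because any Whitehead product $[\alpha,\beta]_W$ (with both factors of degree $\geq 1$) factors through the attaching map $S^{a+b-1}\to S^a\vee S^b$ of the top cell of $S^a\times S^b$, and this attaching map has vanishing rational Hurewicz image in the wedge. On the other hand, naturality of the Whitehead product shows that $f_{k*}$ sends $W_k$ onto the subspace of iterated Whitehead products of the images of the $\tilde d_i$ in $\pi_*(\vee S^r)\otimes\mathbb Q$, and by Theorem \ref{freeliealgebra} together with Proposition \ref{products} this fills the entire degree-$k$ piece $\pi_k(\vee S^r)\otimes\mathbb Q$. Composing with the isomorphism $K_k\to\pi_k(\vee S^r)\otimes\mathbb Q$, this forces $W_k=K_k$, which proves the first two assertions, with $\theta f_k$ identifying $K_k$ with the degree $k-1$ component of $L(D)$.

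For the degree statement, the free graded Lie algebra $L(D)$ on a single copy of $D$ in degree $r-1$ has its degree-$m(r-1)$ piece spanned by $m$-fold iterated brackets and vanishes in all other degrees. The degree $r-1$ piece ($m=1$) is $D$ itself; under $\theta$ it corresponds to the Hurewicz image rather than to Whitehead products, so inside $K_*$ we see only $m\geq 2$, i.e.\ $*-1 = m(r-1)$ with $m\geq 2$, equivalently $* = r + n(r-1)$ for $n = m-1\geq 1$. For $*=2r-1$ (so $m=2$) we recover the sub-$\mathbb Q\Gamma$-module of two-fold brackets, which is exactly $[D,D]$.

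The main obstacle is Step (ii)—making sure the iterated Whitehead products of the chosen lifts $\tilde d_i$ surject onto all of $K_k$ and not just a proper $\Gamma$-submodule. This hinges on the free Lie algebra description of $\pi_*(\vee S^r)\otimes\mathbb Q$ (so that iterated brackets of a $\mathbb Q\Gamma$-generating set of $D$ generate each degree) together with the naturality of $f_k$ for Whitehead products; both are in hand from Theorem \ref{freeliealgebra} and Proposition \ref{products}.
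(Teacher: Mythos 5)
Your proposal is correct and follows essentially the same route as the paper's own proof: both establish that the Whitehead-product subspace lies in $K_k$ (since nontrivial Whitehead products vanish in homology), and both use Theorem \ref{freeliealgebra} together with Proposition \ref{products} to show that $\theta f_k$ (equivalently, the restriction of $f_{k*}$ to $K_k$) carries that subspace isomorphically onto the free Lie algebra $\pi_{k-1}(\Omega\vee S^r)\otimes\mathbb Q$, so that the diagram-chase isomorphism $K_k\cong\pi_k(\vee S^r)\otimes\mathbb Q$ squeezes $K_k$ to equal the Whitehead-product subspace. The only cosmetic difference is that you phrase the generation in terms of lifted $\mathbb Q\Gamma$-generators $\tilde d_i$ of $D$, whereas the paper works directly with the Lie closure of the full module $\pi_r(V^k)$; these are the same subspace by bilinearity and $\Gamma$-equivariance of the Whitehead product.
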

\begin{proof}
Let $Wh_*$ be the closure of $\pi_r(V^k)$ under Whitehead products. This $Wh_*$ is a $\mathbb Q\Gamma$-module and a Lie algebra.
The map 
\begin{equation}
\label{loop2}
\theta f_k:\pi_{*}(V^k)\ra\pi_{*-1}(\Omega\vee S^r)
\end{equation}
sends Whitehead products to ($\pm$) Lie brackets and, by Theorem \ref{freeliealgebra}, the codomain $\pi_{*-1}(\Omega\vee S^r)$ is the free graded Lie algebra on $D\cong\pi_r(V^k)\cong\pi_{r-1}(\Omega\vee S^r)$. Consequently $Wh_*$ is also a free graded Lie algebra and the map $\theta f_k$ is an isomorphism $Wh_*\cong L(\pi_{r-1}(\Omega\vee S^r))$.

Any non-trivial Whitehead product is zero in homology, so $Wh_k\subset K_k$.
Since $\theta f_k:K_k\ra\pi_{k-1}(\Omega\vee S^r)$ is an isomorphism we conclude that $K_k$ is precisely the subspace of Whitehead products.
In particular, $K_{2r-1}=[\pi_r(V^k),\pi_r(V^k)]_W\cong[D,D]$  as a $\mathbb Q\Gamma$-module.
\end{proof}

Now we show the Hurewicz map splits rationally. 
\begin{lemma}
The extension of $\mathbb Q\Gamma$-modules 
\begin{equation}
\label{extension}
0\ra K_k\ra\pi_k(V^{k})\ra H_k(V^{k})\ra 0
\end{equation}
has a $\Gamma$-equivariant section $s$.
\end{lemma}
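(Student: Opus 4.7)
The plan is to use the homotopy fibration $E \rightarrow V^k \rightarrow \vee S^r$ associated to $f_k$, combined with the rational Hurewicz theorem and the Serre spectral sequence. For $\Gamma$-equivariance I would first replace $\vee S^r$ by a $\Gamma$-equivariant rational model with the same rational homotopy type --- for instance, the rational $\Gamma$-complex realizing the Quillen dgla $(L(D),0)$ with $D$ placed in degree $r-1$; this is a $\Gamma$-equivariant rational Moore space whose homotopy is the free graded Lie algebra $L(D)$. The map $f_k$ can then be chosen to be $\Gamma$-equivariant without disturbing the property of being rationally $k$-connected, so the homotopy fiber $E$ inherits a $\Gamma$-action and the whole fibration is $\Gamma$-equivariant.

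Since $f_k$ is rationally $k$-connected, $E$ is rationally $(k-1)$-connected, and the rational Hurewicz theorem (from Section~\ref{rationalhomotopysection}) gives a $\mathbb{Q}\Gamma$-module isomorphism $\pi_k(E)\otimes\mathbb{Q} \xrightarrow{\cong} H_k(E;\mathbb{Q})$. Next I would inspect the Serre spectral sequence $E^2_{p,q} = H_p(\vee S^r;\mathbb{Q}) \otimes H_q(E;\mathbb{Q}) \Rightarrow H_{p+q}(V^k;\mathbb{Q})$, which is $\Gamma$-equivariant and concentrated in columns $p=0$ and $p=r$. In total degree $k$ with $k>r$, the potential contribution $E^2_{r,k-r} = (D\otimes\mathbb{Q})\otimes H_{k-r}(E;\mathbb{Q})$ vanishes because $0<k-r<k$ lies in the rationally connected range of $E$. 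Thus $E^2_{0,k}=H_k(E;\mathbb{Q})$ is the only nonvanishing entry in total degree $k$, giving an identification $H_k(V^k;\mathbb{Q}) \cong H_k(E;\mathbb{Q})$ of $\mathbb{Q}\Gamma$-modules.

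The desired section is then the composite zigzag of $\Gamma$-equivariant maps
\[
s : H_k(V^k) \xleftarrow{\cong} H_k(E) \xleftarrow{\cong} \pi_k(E) \hookrightarrow \pi_k(V^k),
\]
where the last arrow comes from the long exact sequence of the fibration. Naturality of the Hurewicz map with respect to the fiber inclusion $i:E\hookrightarrow V^k$ yields $h_{V^k}\circ s = \mathrm{id}_{H_k(V^k)}$, so $s$ splits the extension. (The case $k=r$ is trivial since $K_r=0$ and the Hurewicz map is already an iso, and the range $r<k<2r-1$ is also trivial since $K_k=0$ there by the previous lemma.)

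The main obstacle is arranging $\Gamma$-equivariance of the target $\vee S^r$: the classical presentation in the appendix treats $\vee S^r$ as a non-equivariant space, and the above approach requires upgrading everything to an equivariant setting, which I handle by passing to an equivariant rational model as above. A more intrinsic alternative would be to work directly with the Whitehead product structure on $\pi_*(V^k)$, using the cellular Lie model to identify $K_k$ as a piece of the free Lie algebra on $D$ and constructing a canonical $\Gamma$-stable complement coming from the $k$-cells of $V^k$, but verifying $\Gamma$-stability of such a complement looks more involved than the fibration argument.
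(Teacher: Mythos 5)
The fibration approach you propose is fundamentally different from the paper's, and unfortunately it has a circularity problem that I don't see how to repair. Your argument requires, as its very first step, a $\Gamma$-equivariant rational model for $\vee S^r$ whose rational homotopy realizes the Quillen dgla $(L(D),0)$ with $D$ placed in degree $r-1$. Such a space is precisely a $\Gamma$-equivariant rational Moore space $M(r,D\otimes\mathbb Q)$, which is exactly the object Theorem~\ref{specialcase} (equivalently Theorem~\ref{rationalmoorespace}) is trying to construct, and this lemma is a step inside that inductive construction. Invoking its existence to prove the inductive step begs the question. Even if you appeal to Theorem~\ref{rationalmoorespace} from the body of the paper, that defeats the purpose of the appendix, which is advertised as an alternative, more elementary route to the same conclusion. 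Your secondary suggestion --- finding a $\Gamma$-stable complement to $K_k$ directly inside the cellular Lie model --- is closer in spirit to what Section~\ref{rationalhomotopysection} actually does, but as you yourself note, you don't have a mechanism to produce such a complement, and in the appendix no cellular Lie model is used.

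The paper's actual argument is purely algebraic and makes no reference to a fibration or to an equivariant target space. It classifies the extension by an element of $Ext^1_\Gamma(H_k(V^k),K_k)$ and shows this group vanishes, as follows. Because the free resolution $F_*\to D$ is by definition $Hom_\Gamma(C_{d+r-*}(E\Gamma),\mathbb Z\Gamma)$, the Ext group is identified with the group homology $H_{d+r-(k+2)}(B\Gamma;K_k)$. By the previous lemma $K_k$ vanishes except when $k=r+n(r-1)$ with $n>0$, and under the standing hypothesis $d-1>r>d/2$ the degree $d+r-(k+2)$ is negative for $n\geq 2$, so the only potentially nontrivial case is $k=2r-1$, where $K_{2r-1}\cong[D,D]$ and the group is $H_{d-1-r}(B\Gamma;[D,D])$ with $d-1-r>0$. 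Finally one shows $H_{>0}(BF_2^d;D\otimes D)=0$ by Künneth, starting from the base case $d=1$. Notice that the range $d/2<r<d-1$ is used essentially here, to push all the other $k$ into negative degree; your fibration argument would not naturally see this constraint, which is another sign that you are working against the grain of this particular special case. A smaller gap, independent of the circularity: your Serre spectral sequence reasoning checks that $E^2_{r,k-r}$ vanishes but does not rule out the differential $d_r\colon E^r_{r,k-r+1}\to E^r_{0,k}$, which is what could kill part of $E^2_{0,k}$ before it reaches $E^\infty$; one also needs $H_{k-r+1}(E;\mathbb Q)=0$, which does hold in the range but should be said.
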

\begin{proof} We show that the Ext group classifying all extensions  vanishes.
\begin{itemize}
\item
The extension is classified by an element of the group $Ext^1_{\Gamma}(H_k(V^{k}),K_k)$. 
\item
To get this $Ext$ group, take the free resolution $\dots\ra F_{k+2}\ra F_{k+1}\ra H_{k}(V^{k})\ra 0$ and compute the homology of the complex $Hom_{\Gamma}(F_{*+{k+1}},K_k)$ in dimension $*=1$. 
\item
Since $F_*$ is $\Gamma$-dual to $C_{d+r-*}(E\Gamma)$ the Ext group gets identified with the group homology 
$$
Ext^1_{\Gamma}(H_k(V^{k}),K_k)\cong
H_{d+r-(k+2)}(B\Gamma;K_k).
$$
\item
For $d-1>r>d/2$ we have $d+r-((2r-1)+2)>0>d+r-((3r-2)+2)$ so the only possible nontrivial extension occurs for $k=2r-1$ and is classified by an element of the group $H_{d-1-r}(B\Gamma;[D,D])$. 
\item 
Next, we show this group is zero by proving $H_{>0}(BF_2^d;[D,D])=0$. Since $D\otimes D$ splits (rationally) as a direct sum $Alt^2D\oplus Sym^2D$ and one of these two is $[D,D]$ it is enough to show that  
\begin{equation}
\label{dualvanish}
H_{>0}(BF_2^d;D\otimes D)=0.\footnote{More generally, one expects for torsionfree lattices $\Gamma$ in locally symmetric spaces of $\mathbb Q$-rank $q$ that $H_{>d-q}(B\Gamma;D\otimes D)=0$. This is true, for instance, for finite index torsionfree subgroups in $\SL_{q+1}\mathbb Z$ \cite{avramidifillings}.}
\end{equation} 
\begin{proof}
This is true for $d=1$ because 
\begin{eqnarray}
H_1(BF_2;D\otimes D)&\cong& H^0(BF_2;D)=D^{F_2}=0,\\
H_{>1}(BF_2;D\otimes D)&\cong& H^{<0}(BF_2;D)=0.
\end{eqnarray}
Denote by $D(\Gamma)$ the dualizing module of the group $\Gamma$. The dualizing module of $F_2^d$ splits as a tensor product of the dualizing modules for the individual $F_2$-factors 
$$
D(F_2^d)=D(F_2)\otimes\dots \otimes D(F_2),
$$ 
so (\ref{dualvanish}) for general $d$ follows from the Kuenneth formula.  
\end{proof}  
\end{itemize}
This completes the proof of Lemma \ref{extension}.
\end{proof}

Since the extension (\ref{extension}) has a $\Gamma$-equivariant section $s:H_k(V^k)\ra\pi_k(V^k)$, we can $\Gamma$-equivariantly attach the generators $F_{k+1}$ of $H_k(V^k)$ to build a $\Gamma$-complex $V^{k+1}=V^k\cup F_{k+1}$. Moreover, after modifying the map $f_k$ on the $k$-skeleton if necessary\footnote{We are allowed to modify $f_k$ by an integral map $F_k\ra K_k$. Since $s(H_k(V^k))$ is a direct summand of $F_k$ and $\im f_k\cong K_k$ we can modify the map $f_k$ so that $s(H_k(V^k))$ is in the kernel. This modified map extends to $f_{k+1}$.} we can extend it to a $(k+1)$-connected map $f_{k+1}:V^{k+1}\ra\vee S^r$. This finishes the inductive step. Proceeding in this way, we get the $\Gamma$-complex $V$. Since its reduced cellular chain complex is $(F_*,Nf_*)$ for some positive integer $N$, it has the correct rational homology. This completes the proof of Theorem \ref{specialcase}.

\bibliographystyle{amsplain}
\bibliography{noqsingerbib}

\end{document}